\documentclass[12pt]{amsart}

\usepackage[T1]{fontenc}
\usepackage[utf8]{inputenc}
\usepackage{amsmath,amssymb,amsthm,amsfonts,mathtools}
\usepackage{tikz}
\usetikzlibrary{arrows.meta,calc,positioning,decorations.pathreplacing,backgrounds}
\usepackage{booktabs,longtable,array}
\usepackage[shortlabels]{enumitem}
\usepackage{hyperref}
\hypersetup{colorlinks=true,linkcolor=blue,citecolor=blue,urlcolor=blue}
\usepackage{xcolor}

\newenvironment{ack}{\section*{Acknowledgments}\noindent}{}
\usepackage[margin=1in]{geometry}

\definecolor{mynavy}{RGB}{0,31,63}
\definecolor{mygold}{RGB}{215,163,9}
\definecolor{myteal}{RGB}{0,128,128}
\definecolor{mycrimson}{RGB}{153,0,0}

\numberwithin{equation}{section}

\theoremstyle{plain}
\newtheorem{theorem}{Theorem}[section]
\newtheorem{lemma}[theorem]{Lemma}
\newtheorem{proposition}[theorem]{Proposition}
\newtheorem{corollary}[theorem]{Corollary}

\theoremstyle{definition}
\newtheorem{definition}[theorem]{Definition}
\newtheorem{example}[theorem]{Example}
\newtheorem{remark}[theorem]{Remark}

\DeclareMathOperator{\Idem}{Idem}
\DeclareMathOperator{\Hom}{Hom}
\DeclareMathOperator{\Gal}{Gal}
\DeclareMathOperator{\Jac}{J}
\DeclareMathOperator{\rad}{rad}

\DeclareMathOperator{\ch}{char}

\DeclareMathOperator{\wt}{wt}
\DeclareMathOperator{\Cay}{Cay}

\newcommand{\Z}{\mathbb{Z}}
\newcommand{\Q}{\mathbb{Q}}
\newcommand{\R}{\mathbb{R}}
\newcommand{\C}{\mathbb{C}}
\newcommand{\F}{\mathbb{F}}
\newcommand{\calA}{\mathcal{A}}
\newcommand{\calO}{\mathcal{O}}
\newcommand{\Ghat}{\widehat{G}}

\begin{document}


\title{Spectral Algebras of Abelian Cayley Graphs}

\author{Deep Bhattacharjee}
\address{Electro-Gravitational Space Propulsion Laboratory (EGSPL),
  Bhubaneswar, Odisha 751030, India}
\email{itsdeep@live.com; d.bhattacharjee@erl-forschung.de}

\author{Priyabrata Mandal}
\address{Department of Mathematics, Bioinformatics and Computer Applications,
  Maulana Azad National Institute of Technology Bhopal, Bhopal 462003, India}
\email{priyabrata@manit.ac.in}
\thanks{Priyabrata Mandal is the corresponding author}

\author{Ushashi Bhattacharya}
\address{National Taiwan University, No.~1, Section~4, Roosevelt Road,
  Zhongzheng District, Taipei City 10617, Taiwan}
\email{rai.bhattacharya.in@gmail.com}

\date{\today}

\begin{abstract}
Let $G$ be a finite abelian group of order $N$, $S \subseteq G
\setminus \{0\}$ a symmetric connection set, and $K$ a field with
$\ch(K) \nmid N$.
The \emph{spectral algebra} $\calA_K(\Cay(G,S)) = K[A]$ generated by
the adjacency matrix of the Cayley graph is proved to decompose, via the
\emph{character-orbit decomposition}, as a semisimple product of field
extensions of $K$, one factor for each $\Gal(\overline{K}/K)$-orbit of the
eigenvalues $\lambda_\chi = \sum_{s \in S} \chi(s)$.
The proof uses the abelian discrete Fourier transform to diagonalise $A$,
the Galois action on the character group $\Ghat$ to partition eigenvalues
into orbits, and the Chinese Remainder Theorem to convert the squarefree
minimal polynomial into a Wedderburn product.
The dimension of $\calA_K(\Cay(G,S))$ equals the number of distinct
eigenvalues, the idempotent count is $2^r$ where $r$ is the orbit number,
and primitive idempotents are computed explicitly via the Bezout algorithm
in $K[x]$.
Over $\Q$, every Wedderburn summand is a real subfield of the cyclotomic
field $\Q(\zeta_N)$.
New results include: a tensor-product comparison for Cartesian products of
Cayley graphs; a systematic analysis of the spectral algebra for
elementary abelian groups $(\Z/p)^k$ (rational for $p \le 3$, requiring
real cyclotomic extensions for $p \ge 5$); and a worked orbit analysis for
non-cyclic groups including $\Z/6 \times \Z/2$ and $\Z/5 \times \Z/2$.
The cyclic case recovers the companion result
$\calA_\Q(C_n) \cong \prod_{d \mid n} \Q(\zeta_d)^+$; the Hamming cube
gives $\calA_\Q(\Q_k) \cong \Q^{k+1}$.
The characteristic-$p$ case is also treated.
\end{abstract}

\keywords{spectral algebra; abelian Cayley graph; character sum}
\subjclass[2020]{Primary 05C25, 11R18; Secondary 11A25, 13B05, 16S99}

\maketitle
\markleft{Bhattacharjee, Mandal, and Bhattacharya}
\setcounter{tocdepth}{2}

\section{Introduction}
\label{sec:intro}

\subsection{Background}

The adjacency matrix $A$ of a graph $\Gamma$ on $n$ vertices generates a
commutative unital $K$-subalgebra $\calA_K(\Gamma) = K[A]$ of the full
matrix ring $M_n(K)$.  For a Cayley graph $\Cay(G, S)$ on a finite abelian
group $G$ of order $N$, the spectrum is governed by the character theory of
$G$: the eigenvalues are character sums $\lambda_\chi = \sum_{s \in S}
\chi(s)$, one for each character $\chi \in \Ghat$~\cite{Ter99,BH12}.
The symmetry $S = -S$
forces $\lambda_\chi = \lambda_{\chi^{-1}}$ and, over $\Q$, ensures
$\lambda_\chi \in \Q(\zeta_N)^+$ for every $\chi$.

This paper answers the question: \emph{what does $\calA_K(\Cay(G,S))$
look like as an abstract $K$-algebra?}  The answer, the
\emph{character-orbit decomposition}, is precisely the isomorphism
in~\eqref{eq:main-intro}: a product of field extensions, one per
$\Gal(\overline{K}/K)$-orbit of eigenvalues.

The decomposition has the following ingredients.
The DFT diagonalisation (Section~\ref{sec:dft}) shows $A$ is diagonalisable
over $\overline{K}$ whenever $\ch(K) \nmid N$, so the minimal polynomial
$m_{G,S,K}$ is squarefree.  The Galois action on the character group
(Section~\ref{sec:orbits}) partitions the eigenvalues into orbits, giving
a factorisation of $m_{G,S,K}$ into pairwise coprime irreducibles.  The
Chinese Remainder Theorem (Section~\ref{sec:proof}) converts this
factorisation into a Wedderburn product.

The paper recovers known special cases: for $G = \Z/n$ and $S = \{1,-1\}$,
one gets $\calA_\Q(C_n) \cong \prod_{d \mid n} \Q(\zeta_d)^+$, the
cyclotomic decomposition of the cycle graph established in the companion
paper~\cite{MKB26}; for the Hamming cube $Q_k$, one gets
$\calA_\Q(Q_k) \cong \Q^{k+1}$ (rational, since all eigenvalues are
integers).  New results include: a tensor product comparison for Cartesian
products (Section~\ref{sec:products}), a partial classification for
elementary abelian groups $(\Z/p)^k$ for all primes $p$
(Section~\ref{sec:elem}), the Bose--Mesner algebra connection
(Section~\ref{sec:bose}), coding-theory applications
(Section~\ref{sec:coding}), and the Galois module structure of each
Wedderburn summand (Section~\ref{sec:galois-structure}).

The main result of this paper is the \emph{character-orbit decomposition}:
\begin{equation}
\label{eq:main-intro}
  \calA_K\!\bigl(\Cay(G,S)\bigr)
  \;\cong\;
  \prod_{\calO\;\in\;\calO_K(G,S)}
  K[x]\bigl/\bigl(\Psi_{\calO,K}(x)\bigr),
\end{equation}
where $\calO_K(G,S)$ is the partition of eigenvalues into
$\Gal(\overline{K}/K)$-orbits and $\Psi_{\calO,K}$ is the minimal
polynomial over $K$ of any element of $\calO$.  Each factor in the product
is a field; the algebra is always semisimple when $\ch(K) \nmid N$.

The spectral theory of abelian Cayley graphs has been studied extensively:
eigenvalues as character sums appear in coding theory, combinatorics, and
number theory~\cite{BH12,CDS79}.  The cyclotomic fields $\Q(\zeta_d)^+$
are central to algebraic number theory~\cite{IR90,Neu99,Was97}.  Wedderburn
decompositions of commutative semisimple algebras are classical~\cite{DF04,
Jac80,Lang02}.  Homomorphism counts for cyclic rings were studied in~\cite{GVB84}.
The contribution here is the explicit identification of
$\calA_K(\Cay(G,S))$ with a product of fields named by the Galois orbit
structure, together with new results on direct products, elementary abelian
groups, and the Bose--Mesner algebra connection.

\textit{MSC 2020.}  Primary: 05C25 (graphs and groups), 11R18 (cyclotomic
extensions).  Secondary: 16S99 (associative rings NEC), 11A25 (arithmetic
functions), 13B05 (polynomial rings).

\subsection{Organisation}

Section~\ref{sec:cayley} sets up Cayley graphs and character theory.
Section~\ref{sec:specalg} defines spectral algebras.  Section~\ref{sec:dft}
establishes the abelian DFT eigenvalue formula.  Section~\ref{sec:orbits}
analyses the Galois action.  Section~\ref{sec:proof} proves the main
theorem.  Section~\ref{sec:struct} gives structural corollaries.
Section~\ref{sec:idem} constructs primitive idempotents.
Section~\ref{sec:cyclic} specialises to cyclic groups.
Section~\ref{sec:products} treats Cartesian products of Cayley graphs.
Section~\ref{sec:elem} analyses elementary abelian groups.
Section~\ref{sec:bose} connects to Bose--Mesner algebras.
Section~\ref{sec:examples} works further examples.
Section~\ref{sec:charp} handles characteristic~$p$.
Section~\ref{sec:conc} concludes.

\section{Cayley Graphs and Characters}
\label{sec:cayley}

\begin{definition}[Cayley graph]
Let $G$ be a finite abelian group and $S \subseteq G \setminus \{0\}$ with
$S = -S$.  The \emph{Cayley graph} $\Cay(G, S)$ has vertex set $G$ and
undirected edge set $\{\{g, g+s\} : g \in G,\, s \in S\}$.  It is
$|S|$-regular and connected if and only if $S$ generates $G$.
\end{definition}

Every undirected Cayley graph on $G = \Z/n$ with symmetric connection set
is a \emph{circulant graph}.  The adjacency matrix $A \in M_N(\Z)$ has
$A_{g,h} = \mathbf{1}[h - g \in S]$.

\subsection{Character sums and the eigenvalue formula}

The \emph{character group} $\Ghat = \Hom(G, \C^*)$ of $G$ is isomorphic
(though not canonically) to $G$.  Each character satisfies $\chi(g)^N = 1$,
so character values are $N$-th roots of unity.

For a symmetric Cayley graph ($S = -S$), the eigenvalue of $A$ at
character $\chi$ is
\begin{equation}
\label{eq:eigenvalue}
  \lambda_\chi \coloneqq \sum_{s \in S} \chi(s).
\end{equation}
The symmetry $S = -S$ forces $\lambda_\chi = \lambda_{\chi^{-1}}$:
$\lambda_{\chi^{-1}} = \sum_{s \in S} \chi^{-1}(s) = \sum_{s \in S}
\chi(-s) = \sum_{s' \in -S} \chi(s') = \sum_{s \in S} \chi(s) = \lambda_\chi$.
Over $\C$, the same identity says $\lambda_\chi = \overline{\lambda_\chi}$,
i.e., $\lambda_\chi \in \R$.

\begin{lemma}[Character sum properties]
\label{lem:char-sum}
For $\Cay(G,S)$ undirected: $(i)$ $\lambda_\chi = \lambda_{\chi^{-1}}$;
$(ii)$ $\lambda_{\chi_0} = |S|$ where $\chi_0$ is the principal character;
$(iii)$ $|\lambda_\chi| \le |S|$.
\end{lemma}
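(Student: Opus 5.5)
The three parts are elementary consequences of the definition \eqref{eq:eigenvalue} and two basic facts about characters of a finite abelian group: $\chi(-g) = \chi(g)^{-1}$, because $\chi$ is a homomorphism to $\C^*$, and every value $\chi(g)$ is an $N$-th root of unity, so $|\chi(g)| = 1$. I will prove the parts in order, each by manipulating the finite sum $\sum_{s\in S}\chi(s)$ directly.

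For $(i)$, I will write $\lambda_{\chi^{-1}} = \sum_{s\in S}\chi(s)^{-1} = \sum_{s\in S}\chi(-s)$. The map $s\mapsto -s$ is a bijection of $S$ onto $-S = S$, so reindexing the sum gives $\sum_{s'\in S}\chi(s') = \lambda_\chi$. This is the computation already displayed before the lemma. Since $|\chi(s)|=1$ gives $\chi(s)^{-1} = \overline{\chi(s)}$, it also shows $\lambda_\chi = \overline{\lambda_\chi}$, so $\lambda_\chi\in\R$. That fact is not part of the statement but is worth recording.

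For $(ii)$, the principal character satisfies $\chi_0(s) = 1$ for all $s$, so $\lambda_{\chi_0} = \sum_{s\in S} 1 = |S|$.

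For $(iii)$, I will apply the triangle inequality in $\C$: $|\lambda_\chi| \le \sum_{s\in S}|\chi(s)| = |S|$, using $|\chi(s)|=1$. If one wants the statement interpreted over a general field $K$ rather than $\C$, I would note that the eigenvalues are algebraic integers in $\Q(\zeta_N)$. The bound then means that every complex embedding has absolute value at most $|S|$, and the same argument applies under each embedding because embeddings send roots of unity to roots of unity.

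There is no real obstacle here. The only point needing care is justifying $|\chi(s)| = 1$: from $\chi(g)^N = \chi(Ng) = \chi(0) = 1$ we get $|\chi(g)|^N = 1$, and hence $|\chi(g)| = 1$ since $|\chi(g)|$ is a positive real number.
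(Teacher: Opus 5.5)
Your proposal is correct and follows the paper's own proof. Part $(i)$ is the reindexing $s \mapsto -s$ using $S=-S$ and $\chi(-s)=\chi(s)^{-1}$, part $(ii)$ comes from $\chi_0(s)=1$, and part $(iii)$ is the triangle inequality with $|\chi(s)|=1$; your derivation of $|\chi(s)|=1$ from $\chi(g)^N=1$ adds a detail the paper leaves implicit.
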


\begin{proof}
(i) was proved above.  (ii) is immediate since $\chi_0(s) = 1$ for all $s$.
(iii) follows from $|\chi(s)| = 1$ and the triangle inequality.
\end{proof}

\section{The Spectral Algebra}
\label{sec:specalg}

\begin{definition}[Spectral algebra]
\label{def:spec}
For a graph $\Gamma$ on $n$ vertices with adjacency matrix $A \in M_n(\Z)$
and a field $K$, the \emph{spectral algebra} is
\[
  \calA_K(\Gamma) \coloneqq K[A]
  = \{f(A) : f \in K[x]\} \subseteq M_n(K).
\]
It is a commutative unital $K$-subalgebra of $M_n(K)$.
\end{definition}

\begin{proposition}[Quotient-ring presentation]
\label{prop:quotient}
Let $m = m_{\Gamma,K} \in K[x]$ be the minimal polynomial of $A$ over $K$.
The evaluation map $\mathrm{ev}_A : f \mapsto f(A)$ induces a $K$-algebra
isomorphism
\[
  \calA_K(\Gamma) \;\cong\; K[x]/(m_{\Gamma,K}),
\]
and $\dim_K \calA_K(\Gamma) = \deg m_{\Gamma,K}$.
\end{proposition}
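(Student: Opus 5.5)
The argument is the standard first isomorphism theorem for the evaluation homomorphism. First, $\mathrm{ev}_A : K[x] \to M_n(K)$, $f \mapsto f(A)$, is a $K$-algebra homomorphism. It is $K$-linear, sends $1$ to $I_n$, and is multiplicative, since powers of the single matrix $A$ commute and $(fg)(A) = f(A)g(A)$ follows by expanding coefficients. By Definition~\ref{def:spec}, its image is exactly $\calA_K(\Gamma) = K[A]$. So $\mathrm{ev}_A$ corestricts to a surjection $K[x] \twoheadrightarrow \calA_K(\Gamma)$.

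Second, we identify the kernel. The kernel $\{f : f(A) = 0\}$ is an ideal of the principal ideal domain $K[x]$. It is nonzero by Cayley--Hamilton: the characteristic polynomial of $A$ has coefficients in $\Z \subseteq K$ (or in the prime field, in positive characteristic) and annihilates $A$. Its monic generator is, by definition, the minimal polynomial $m_{\Gamma,K}$. The only point needing care is that "minimal polynomial over $K$" is taken to mean exactly this monic generator. Equivalently, it is the monic polynomial of least degree killing $A$, and the division algorithm shows that this polynomial divides every annihilating polynomial. I would state this identification explicitly so that the kernel equals $(m_{\Gamma,K})$.

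The first isomorphism theorem then gives $K[x]/(m_{\Gamma,K}) \cong \calA_K(\Gamma)$, induced by $f + (m) \mapsto f(A)$. For the dimension, the division algorithm shows that $1, x, \dots, x^{d-1}$ with $d = \deg m_{\Gamma,K}$ form a $K$-basis of $K[x]/(m)$. They span, because every $f$ is congruent to its remainder modulo $m$, which has degree less than $d$. They are independent, because a nonzero polynomial of degree less than $d$ cannot be divisible by $m$. Hence $\dim_K \calA_K(\Gamma) = d$. Equivalently, $I, A, \dots, A^{d-1}$ is a basis of $K[A]$.

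No step here is a genuine obstacle. The only subtlety is the remark above about the integrality of $A$, which ensures the characteristic polynomial makes sense over any $K$ and hence that the kernel is nonzero.
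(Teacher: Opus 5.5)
Your proof is correct and follows the paper's argument: $\mathrm{ev}_A$ maps onto $K[A]$ with kernel $(m_{\Gamma,K})$, the first isomorphism theorem gives the isomorphism, and the basis $\{1,\bar{x},\ldots,\bar{x}^{d-1}\}$ of $K[x]/(m)$ gives the dimension; you just supply the routine details (nonzero kernel, division algorithm) that the paper leaves implicit. One small point: integrality of $A$ is not really needed, since any matrix in $M_n(K)$ has a characteristic polynomial over $K$, and even more simply $I, A, \ldots, A^{n^2}$ must be linearly dependent in the $n^2$-dimensional space $M_n(K)$.
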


\begin{proof}
The map $\mathrm{ev}_A : K[x] \to \calA_K(\Gamma)$ is surjective with
kernel $(m_{\Gamma,K})$.  The first isomorphism theorem gives the result;
the $K$-basis $\{1, \bar{x}, \ldots, \bar{x}^{d-1}\}$ of $K[x]/(m)$
shows $\dim_K \calA_K(\Gamma) = \deg m$.
\end{proof}

\begin{proposition}[Semisimplicity criterion]
\label{prop:ss}
$\calA_K(\Gamma)$ is semisimple if and only if $m_{\Gamma,K}$ is squarefree.
\end{proposition}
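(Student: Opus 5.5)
By Proposition~\ref{prop:quotient}, it suffices to show that $R = K[x]/(m)$ with $m = m_{\Gamma,K}$ is semisimple exactly when $m$ is squarefree. Since $R$ is a finite-dimensional commutative $K$-algebra, it is Artinian. For such rings, semisimplicity is equivalent to having zero Jacobson radical. In the commutative Artinian case the Jacobson radical coincides with the nilradical, because every prime ideal is maximal. So the plan is to prove that the nilradical of $R$ is zero if and only if $m$ is squarefree. Here ``squarefree'' means no repeated irreducible factor in $K[x]$, which is the notion the CRT argument needs.

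First write $m = \prod_{i=1}^r p_i^{e_i}$ with the $p_i$ distinct monic irreducibles in $K[x]$. These are pairwise coprime, so the Chinese Remainder Theorem gives
\[
  R \;\cong\; \prod_{i=1}^r K[x]/(p_i^{e_i}).
\]

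For the ``if'' direction, suppose all $e_i = 1$. Each factor $K[x]/(p_i)$ is then a field, because $(p_i)$ is maximal in the PID $K[x]$. A finite product of fields is semisimple: each field is a simple module over itself, and the product is the direct sum of these minimal ideals.

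For the ``only if'' direction, suppose some $e_i \ge 2$. Take $f = m/p_i$. Then $\bar f \neq 0$ in $R$, since $\deg f < \deg m$. On the other hand, $f^2$ is divisible by $m$ because $e_i \ge 2$, so $\bar f^{\,2} = 0$. This gives a nonzero nilpotent element, and hence a nonzero nilpotent ideal $(\bar f)$, since $R$ is commutative. That ideal lies in the Jacobson radical, so $R$ is not semisimple.

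The main obstacle is the conventions rather than any computation. One has to fix which definition of ``semisimple'' is in use (zero Jacobson radical, or a direct sum of simple modules) and check that the two agree for finite-dimensional commutative algebras. I would state the Artinian fact explicitly, citing Wedderburn--Artin or \cite{Lang02}, and avoid the separability issues that arise with imperfect $K$ by defining squarefree via distinct irreducible factors.
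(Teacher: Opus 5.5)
Your proof is correct and is essentially the paper's argument. The paper compresses it into the single identity $\Jac(K[x]/(m)) = (\rad(m))/(m)$, which vanishes exactly when $m$ is squarefree, and your CRT splitting into fields together with the explicit nonzero nilpotent $\overline{m/p_i}$ simply unpacks that identity, using the same meaning of ``squarefree'' (no repeated irreducible factor in $K[x]$) that is implicit in the paper's $\rad(m)$.
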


\begin{proof}
$\Jac(K[x]/(m)) = (\rad(m))/(m)$, so $\Jac = 0$ iff $m$ is squarefree.
\end{proof}

\section{Abelian DFT and Eigenvalues}
\label{sec:dft}

\subsection{DFT diagonalisation}

\begin{theorem}[Abelian DFT diagonalisation]
\label{thm:dft}
Let $G = \{g_0, \ldots, g_{N-1}\}$, $\Ghat = \{\chi_0, \ldots, \chi_{N-1}\}$,
and $K$ a field with $\ch(K) \nmid N$.

Since $\ch(K) \nmid N$, the polynomial $x^N - 1$ is separable over $K$
(its derivative $N x^{N-1}$ is non-zero and coprime to $x^N - 1$).
The algebraic closure $\overline{K}$ therefore contains exactly $N$ distinct
$N$-th roots of unity; fix a primitive one, $\zeta_N \in \overline{K}$.
Regard each character $\chi_j \in \Ghat$ as a group homomorphism
$G \to \overline{K}^*$ whose values are $N$-th roots of unity in $\overline{K}$,
and define the \emph{DFT matrix}
$F = (\chi_j(g_i))_{0 \le i,j < N} \in M_N(\overline{K})$.

Then $F$ is invertible with
$F^{-1} = N^{-1}(\chi_j(-g_i))^\top$, and
\begin{equation}
\label{eq:Fdiag}
  F^{-1} A F = \mathrm{diag}(\lambda_{\chi_0}, \ldots, \lambda_{\chi_{N-1}})
  \quad \text{in } M_N(\overline{K}),
\end{equation}
where $\lambda_\chi = \sum_{s \in S} \chi(s)$.
Consequently, $A$ is diagonalisable over $\overline{K}$, and its minimal
polynomial $m_{G,S,K} \in K[x]$ is squarefree over $K$: since $A$ has $N$
distinct eigenvalues in $\overline{K}$, no irreducible factor of $m_{G,S,K}$
over $K$ can occur with multiplicity greater than one.
\end{theorem}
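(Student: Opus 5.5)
The plan is to verify the three assertions in order: invertibility of $F$ with the stated inverse, the diagonalisation \eqref{eq:Fdiag}, and squarefreeness of $m_{G,S,K}$. One preliminary point comes first. Since $x^N-1$ is separable, the group $\mu_N(\overline{K})$ of $N$-th roots of unity is cyclic of order $N$. Writing $G\cong \bigoplus_i \Z/n_i$ with each $n_i \mid N$, we get $|\Hom(G,\overline{K}^*)| = \prod_i \gcd(n_i,N) = N$. So there really are $N$ characters $\chi_0,\dots,\chi_{N-1}$ with values in $\overline{K}$, and they form a group under pointwise multiplication. Every step below then only uses that each $\chi_j$ is a homomorphism into $\overline{K}^*$.

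\emph{Invertibility.} Since $F^{-1}$ is claimed to be $N^{-1}(\chi_j(-g_i))^\top$, it suffices to show that $\sum_{g\in G}\chi_j(g)\chi_k(-g) = N\delta_{jk}$. Set $\psi=\chi_j\chi_k^{-1}$. If $\psi$ is trivial the sum is $N$. Otherwise choose $h$ with $\psi(h)\neq 1$. Reindexing $g\mapsto g+h$ gives $\psi(h)\sum_g\psi(g)=\sum_g\psi(g)$, so the sum vanishes. Finally, $N$ is invertible in $K$ because $\ch(K)\nmid N$, so the scaling by $N^{-1}$ makes sense.

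\emph{Diagonalisation.} I would show that the $j$-th column $v_j=(\chi_j(g))_{g}$ of $F$ is an eigenvector of $A$. Since $A_{g,h}=\mathbf 1[h-g\in S]$,
\[
(Av_j)_g=\sum_{s\in S}\chi_j(g+s)=\chi_j(g)\sum_{s\in S}\chi_j(s)=\lambda_{\chi_j}\,(v_j)_g .
\]
Hence $AF=F\,\mathrm{diag}(\lambda_{\chi_0},\dots,\lambda_{\chi_{N-1}})$, and multiplying on the left by $F^{-1}$ gives \eqref{eq:Fdiag}. Symmetry of $S$ is not even needed for this step.

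\emph{Squarefreeness.} This is the only delicate step. The final sentence of the statement argues from ``$N$ distinct eigenvalues'', which is false in general: for example, $\lambda_\chi=\lambda_{\chi^{-1}}$. I would instead argue from diagonalisability. Let $\mu_1,\dots,\mu_t$ be the distinct values among the $\lambda_\chi$. Since $A$ is conjugate over $\overline{K}$ to a diagonal matrix, its minimal polynomial over $\overline{K}$ is $\prod_{i=1}^t(x-\mu_i)$, which has distinct roots. The minimal polynomial of a matrix with entries in $K$ is unchanged by extending scalars to $\overline{K}$: its degree is the least $d$ for which $I,A,\dots,A^d$ are linearly dependent, and linear dependence is detected by ranks, which are field-independent. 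So $m_{G,S,K}=\prod_{i}(x-\mu_i)$ is separable, in particular squarefree over $K$, and by Proposition~\ref{prop:ss} the algebra $\calA_K$ is semisimple. I expect this step to need the most care: the correct justification is diagonalisability, not distinctness of the $N$ eigenvalues.
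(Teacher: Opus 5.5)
Your proof is correct. For invertibility and the eigenvalue formula it is exactly the paper's argument: the shift $g\mapsto g+h$ gives character orthogonality, and the columns of $F$ are eigenvectors.

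For squarefreeness, you are right that the statement's justification via ``$N$ distinct eigenvalues'' fails; already for $C_4$ the eigenvalue $0$ occurs twice. The paper's proof of this theorem never addresses that clause. The argument the paper actually relies on is Step~1 of the proof of Theorem~\ref{thm:main}: a diagonalisable matrix has squarefree minimal polynomial. That is your route, and your remark that the minimal polynomial is unchanged when passing from $K$ to $\overline{K}$ supplies the step the paper leaves implicit.

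Your preliminary count $|\Hom(G,\overline{K}^*)|=N$ likewise fills in a point the paper takes for granted when it treats the characters as $\overline{K}$-valued.
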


\begin{proof}
\emph{Invertibility.}  The $(j,j')$-entry of $F \cdot N^{-1} (\chi_j(-g_i))^\top$
is $N^{-1} \sum_{g \in G} (\chi_j \chi_{j'}^{-1})(g)$.
The character $\chi_j \chi_{j'}^{-1}$ is principal iff $j = j'$.
For $j \ne j'$, pick $h \in G$ with $(\chi_j \chi_{j'}^{-1})(h) \ne 1$
(such $h$ exists since $\chi_j \ne \chi_{j'}$); substituting $g \mapsto h+g$
gives $(\chi_j \chi_{j'}^{-1})(h) \cdot \sum_g (\chi_j \chi_{j'}^{-1})(g)
= \sum_g (\chi_j \chi_{j'}^{-1})(g)$,
forcing the sum to zero, since
$(\chi_j \chi_{j'}^{-1})(h) - 1 \ne 0$ in $\overline{K}$
(the $N$-th roots of unity are distinct because $\ch(K) \nmid N$).
Hence $F \cdot N^{-1}(\chi_j(-g_i))^\top = I_N$.

\emph{Eigenvalue formula.}  The $j$-th column $v_j = (\chi_j(g_0), \ldots,
\chi_j(g_{N-1}))^\top$ satisfies
$(A v_j)_i = \sum_{s \in S} \chi_j(g_i + s)
= \chi_j(g_i) \sum_{s \in S} \chi_j(s)
= \lambda_{\chi_j}(v_j)_i$,
so $v_j$ is an eigenvector of $A$ with eigenvalue $\lambda_{\chi_j}$.
\end{proof}

\subsection{Cyclotomic containment}

\begin{corollary}[Cyclotomic containment]
\label{cor:cyclo}
For $K = \Q$ and $S = -S$, every eigenvalue $\lambda_\chi$ lies in the
maximal real subfield $\Q(\zeta_N)^+ = \Q(\zeta_N + \zeta_N^{-1})$.
In particular, $[\Q(\lambda_\chi) : \Q]$ divides $\varphi(N)/2$.
\end{corollary}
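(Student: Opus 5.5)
The plan has two parts: first show $\lambda_\chi$ lies in $\Q(\zeta_N)$ and is fixed by complex conjugation, then read off the degree bound from Galois theory.

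\emph{Containment.} Work over $\C$ with $\zeta_N = e^{2\pi i/N}$. Every character value $\chi(s)$ is an $N$-th root of unity, hence a power $\zeta_N^{a_s}$. Therefore $\lambda_\chi = \sum_{s\in S}\zeta_N^{a_s}$ lies in $\Q(\zeta_N)$.

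\emph{Reality.} Lemma~\ref{lem:char-sum}(i) gives $\lambda_\chi = \lambda_{\chi^{-1}}$. Since $\chi^{-1}(s) = \overline{\chi(s)}$, we have $\lambda_{\chi^{-1}} = \overline{\lambda_\chi}$, so $\lambda_\chi$ is real. Hence $\lambda_\chi \in \Q(\zeta_N)\cap\R$.

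\emph{Identifying the real subfield.} I must check that $\Q(\zeta_N)\cap\R = \Q(\zeta_N+\zeta_N^{-1})$. For $N\le 2$, $\Q(\zeta_N)=\Q$ and there is nothing to prove. For $N\ge 3$:
\begin{itemize}
\item Complex conjugation restricts to a nontrivial automorphism $c$ of order $2$ in $\Gal(\Q(\zeta_N)/\Q)\cong(\Z/N)^\times$, namely $\zeta_N\mapsto\zeta_N^{-1}$.
\item The fixed field of $\langle c\rangle$ is exactly $\Q(\zeta_N)\cap\R$, and it has degree $\varphi(N)/2$.
\item Put $\theta = \zeta_N+\zeta_N^{-1}$. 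Then $\zeta_N$ is a root of $x^2-\theta x+1$ over $\Q(\theta)$, so $[\Q(\zeta_N):\Q(\theta)]\le 2$.
\item $\theta$ is real while $\zeta_N$ is not, so $\Q(\theta)$ is a proper subfield of $\Q(\zeta_N)$ contained in the fixed field. Comparing degrees forces $\Q(\theta)=\Q(\zeta_N)\cap\R$.
\end{itemize}

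\emph{Degree bound.} By the tower law, $[\Q(\lambda_\chi):\Q]$ divides $[\Q(\zeta_N)^+:\Q]=\varphi(N)/2$ when $N\ge3$.

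The main obstacle is a degenerate case. For $N\le 2$, $\varphi(N)/2=1/2$, so the divisibility statement must be read with $[\Q(\zeta_N)^+:\Q]=1$ there. In that case $\lambda_\chi\in\Z$ directly, and I will note this convention explicitly.
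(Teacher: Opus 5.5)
Your proof is correct and follows the same route as the paper: character values are $N$-th roots of unity, so $\lambda_\chi \in \Q(\zeta_N)$, and $\lambda_\chi = \lambda_{\chi^{-1}} = \overline{\lambda_\chi}$ places it in $\Q(\zeta_N)\cap\R$. You also supply two details the paper leaves implicit. First, you verify that $\Q(\zeta_N)\cap\R = \Q(\zeta_N+\zeta_N^{-1})$ has degree $\varphi(N)/2$, which is what the tower-law divisibility needs. Second, you observe that for $N \le 2$ the quantity $\varphi(N)/2 = 1/2$ is not an integer, so the divisibility claim only makes sense under the convention you state.
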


\begin{proof}
Each $\chi(s)$ is an $N$-th root of unity, so $\lambda_\chi \in \Q(\zeta_N)$.
The symmetry $\lambda_\chi = \lambda_{\chi^{-1}}$ and the fact that
$\chi^{-1}$ corresponds to complex conjugation on $\Q(\zeta_N)$ give
$\lambda_\chi \in \R$, hence $\lambda_\chi \in \Q(\zeta_N) \cap \R =
\Q(\zeta_N)^+$.
\end{proof}

The character-orbit decomposition and the commutative diagram underlying
it are shown schematically in Figure~\ref{fig:cd}.

\begin{figure*}[ht]
\centering
\begin{tikzpicture}[
  node distance=2.8cm,
  every node/.style={font=\small},
  arr/.style={->,>=Stealth,thick}]
\node (A) at (0,2.5) {$K[x]$};
\node (B) at (5.5,2.5) {$\calA_K(\Cay(G,S))$};
\node (C) at (0,0) {$K[x]/(m_{G,S,K})$};
\node (D) at (5.5,0) {$\prod_{\calO \in \calO_K(G,S)} K[x]/(\Psi_{\calO,K})$};
\draw[arr] (A) -- node[above] {$\mathrm{ev}_A:\,f \mapsto f(A)$} (B);
\draw[arr] (A) -- node[left] {$\pi$} (C);
\draw[arr,double,double distance=1.5pt] (B) -- node[right] {$\overline{\Phi}$} (D);
\draw[arr,double,double distance=1.5pt] (C) -- node[below] {$\mathrm{C.R.T.}$} (D);
\end{tikzpicture}
\caption{The commutative diagram for the character-orbit decomposition.
  The evaluation map at the top is surjective with kernel $(m_{G,S,K})$.
  The Chinese Remainder Theorem (bottom arrow) applies because
  $m_{G,S,K} = \prod_{\calO} \Psi_{\calO,K}$ is a squarefree product of
  pairwise coprime irreducibles.  Commutativity follows from
  Proposition~\ref{prop:quotient}.}
\label{fig:cd}
\end{figure*}

\section{Galois Orbits on Character Sums}
\label{sec:orbits}

\begin{lemma}[Galois action on eigenvalues]
\label{lem:galois-action}
For $K = \Q$, the group $\Gal(\Q(\zeta_N)/\Q) \cong (\Z/N\Z)^*$ acts on
the eigenvalue set by $\sigma_a(\lambda_\chi) = \lambda_{\chi^a}$, where
$\chi^a(g) = \chi(g)^a$.
\end{lemma}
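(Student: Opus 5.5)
The plan is to compute $\sigma_a$ directly on the defining character sum~\eqref{eq:eigenvalue}, using only that $\sigma_a$ is a ring automorphism and how it acts on roots of unity. Fix a primitive $N$-th root of unity $\zeta_N \in \C$. For $a \in (\Z/N\Z)^*$, let $\sigma_a \in \Gal(\Q(\zeta_N)/\Q)$ be the automorphism with $\sigma_a(\zeta_N) = \zeta_N^a$. This is the standard identification $\Gal(\Q(\zeta_N)/\Q) \cong (\Z/N\Z)^*$, which I take as known from cyclotomic theory, since the paper cites it as classical. Because $\sigma_a$ is a ring homomorphism, it sends every $N$-th root of unity $\omega = \zeta_N^k$ to $\zeta_N^{ak} = \omega^a$. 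As noted in Section~\ref{sec:cayley}, every value $\chi(s)$ is an $N$-th root of unity, so $\sigma_a(\chi(s)) = \chi(s)^a$.

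Next I check that $\chi^a$ is a character, so that $\lambda_{\chi^a}$ makes sense. Since $\C^*$ is abelian, $g \mapsto \chi(g)^a$ is a homomorphism $G \to \C^*$. It depends only on $a \bmod N$, because $\chi(g)^N = 1$. Additivity of $\sigma_a$ then gives
\[
\sigma_a(\lambda_\chi) = \sum_{s\in S} \sigma_a(\chi(s)) = \sum_{s \in S} \chi(s)^a = \sum_{s\in S}\chi^a(s) = \lambda_{\chi^a}.
\]
This uses Corollary~\ref{cor:cyclo}, or simply $\lambda_\chi \in \Q(\zeta_N)$, so that $\sigma_a(\lambda_\chi)$ is defined.

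Finally I verify that this is a group action on the eigenvalue set $\{\lambda_\chi : \chi \in \Ghat\}$. It preserves the set, because $\chi^a \in \Ghat$. The action axioms follow from $\sigma_a\sigma_b = \sigma_{ab}$ and $(\chi^b)^a = \chi^{ab}$. The action is well defined on eigenvalues rather than on characters: if $\lambda_\chi = \lambda_\psi$, then applying the automorphism $\sigma_a$ gives $\lambda_{\chi^a} = \lambda_{\psi^a}$ automatically. For $a$ a unit, $\chi \mapsto \chi^a$ is moreover a bijection of $\Ghat$. The symmetry $S=-S$ is not needed here. The only step needing care, and even it is minor, is the identification of $\sigma_a$ with the $a$-th power map on all $N$-th roots of unity, not just on $\zeta_N$; this follows from multiplicativity.
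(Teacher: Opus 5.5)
Your proposal is correct and follows the paper's proof: apply $\sigma_a$ to each character value to get $\chi(s)^a$, then sum over $S$ to obtain $\lambda_{\chi^a}$. Your additional checks (that $\chi^a$ is a character, the action axioms, and well-definedness on eigenvalues, which is automatic because $\sigma_a$ is a function on $\Q(\zeta_N)$) fill in details the paper leaves implicit.
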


\begin{proof}
$\sigma_a(\zeta_N) = \zeta_N^a$ implies $\sigma_a(\chi(g)) = \chi(g)^a$
for each character value.  Summing over $S$:
$\sigma_a(\lambda_\chi) = \sum_{s \in S} \chi(s)^a
= \sum_{s \in S} \chi^a(s) = \lambda_{\chi^a}$.
\end{proof}

\begin{remark}
The identity $\lambda_\chi = \lambda_{\chi^{-1}}$ (Lemma~\ref{lem:char-sum})
means the effective Galois group on distinct eigenvalues is
$(\Z/N\Z)^*/\{\pm 1\}$.  The stabiliser of $\lambda_\chi$ in
$(\Z/N\Z)^*$ is $\{a : \lambda_{\chi^a} = \lambda_\chi\}$, and the orbit
size equals the quotient group index.
\end{remark}

\begin{definition}[Orbit partition]
\label{def:orbits}
For $K$ with $\ch(K) \nmid N$: $\calO_K(G,S)$ is the partition of
$\{\lambda_\chi : \chi \in \Ghat\}$ into $\Gal(\overline{K}/K)$-orbits.
For each $\calO \in \calO_K(G,S)$, write $\Psi_{\calO,K}$ for the
monic minimal polynomial over $K$ of any orbit element.  Set
$r = |\calO_K(G,S)|$ for the orbit count.
\end{definition}

\begin{lemma}[Coprimality]
\label{lem:coprime}
For distinct orbits $\calO \ne \calO'$,
$\gcd(\Psi_{\calO,K}, \Psi_{\calO',K}) = 1$ in $K[x]$.
\end{lemma}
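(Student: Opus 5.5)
The plan is to use that each $\Psi_{\calO,K}$ is irreducible and monic in $K[x]$. Two distinct monic irreducibles are coprime, so it suffices to show $\Psi_{\calO,K} \ne \Psi_{\calO',K}$ whenever $\calO \ne \calO'$. Irreducibility is automatic: $\Psi_{\calO,K}$ is by Definition~\ref{def:orbits} the minimal polynomial over $K$ of an element $\lambda \in \calO$. Moreover, it does not depend on the choice of $\lambda \in \calO$, because any $\sigma \in \Gal(\overline{K}/K)$ fixes the coefficients of a polynomial in $K[x]$. Hence if $\Psi(\lambda)=0$ then $\Psi(\sigma\lambda)=0$, and $\sigma\lambda$ has the same minimal polynomial.

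\emph{Key step.} Suppose $\Psi_{\calO,K} = \Psi_{\calO',K} =: \Psi$, and pick $\lambda \in \calO$, $\lambda' \in \calO'$. Both are roots of the irreducible polynomial $\Psi$. There is then a $K$-embedding $K(\lambda) \to \overline{K}$ sending $\lambda \mapsto \lambda'$, via $K(\lambda) \cong K[x]/(\Psi) \cong K(\lambda')$. By the standard extension theorem for algebraic closures, this embedding extends to an automorphism $\sigma$ of $\overline{K}$ over $K$. Then $\sigma(\lambda) = \lambda'$, so $\lambda$ and $\lambda'$ lie in the same $\Gal(\overline{K}/K)$-orbit, contradicting $\calO \ne \calO'$. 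Thus $\Psi_{\calO,K}$ and $\Psi_{\calO',K}$ are distinct monic irreducibles. Any common divisor of positive degree would be associate to both, forcing them to be equal. So the gcd is $1$.

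\emph{Main obstacle.} The only delicate point is what $\Gal(\overline{K}/K)$ means when $K$ is not perfect, for instance a non-perfect field of characteristic $p\nmid N$. There $\overline{K}/K$ need not be Galois, and we interpret the group as $\mathrm{Aut}(\overline{K}/K)$. This causes no difficulty here, for two reasons. First, each $\lambda_\chi$ is a sum of $N$-th roots of unity, so it lies in $K(\zeta_N)$. Second, $K(\zeta_N)/K$ is separable, since $x^N-1$ is separable by the argument in Theorem~\ref{thm:dft}. Hence $K(\zeta_N)/K$ is a finite Galois extension. One may therefore run the argument inside $K(\zeta_N)$ with $\Gal(K(\zeta_N)/K)$, which acts transitively on the roots of each irreducible factor. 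Restriction from $\mathrm{Aut}(\overline{K}/K)$ is surjective, so the orbits are the same either way, and the conclusion is unchanged.
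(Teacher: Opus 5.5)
Your proof is correct and takes the same route as the paper: distinct monic irreducibles in the PID $K[x]$ are coprime, and $\calO \ne \calO'$ forces $\Psi_{\calO,K} \ne \Psi_{\calO',K}$. The paper justifies the second fact only by saying the orbits have ``disjoint root sets,'' which tacitly assumes that the root set of $\Psi_{\calO,K}$ is exactly $\calO$; your extension-of-embeddings argument supplies that missing step, and your remark on non-perfect $K$ (using that $K(\zeta_N)/K$ is Galois) covers the one remaining technicality.
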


\begin{proof}
Distinct monic irreducibles in the PID $K[x]$ are coprime; they are
distinct because $\calO \ne \calO'$ implies disjoint root sets.
\end{proof}

\section{Proof of the Main Theorem}
\label{sec:proof}

\begin{theorem}[Character-orbit decomposition]
\label{thm:main}
Let $G$ be a finite abelian group of order $N$, $S \subseteq G
\setminus \{0\}$ symmetric, and $K$ a field with $\ch(K) \nmid N$.  Then:
\begin{enumerate}[label=\textup{(\roman*)}]
  \item $m_{G,S,K}(x) = \prod_{\calO \in \calO_K(G,S)} \Psi_{\calO,K}(x)$,
        a squarefree product of distinct monic irreducibles over $K$;
  \item there is a $K$-algebra isomorphism
        \[
          \calA_K(\Cay(G,S))
          \;\cong\;
          \prod_{\calO \in \calO_K(G,S)} K[x]/(\Psi_{\calO,K}(x));
        \]
  \item $\calA_K(\Cay(G,S))$ is semisimple;
  \item for $K = \Q$, every factor $\Q[x]/(\Psi_{\calO,\Q})$ is a number
        field contained in $\Q(\zeta_N)^+$.
\end{enumerate}
\end{theorem}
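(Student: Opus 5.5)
The plan is to prove (i) first; (ii) and (iii) then follow formally, and (iv) comes from Corollary~\ref{cor:cyclo}.

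For (i), start from Theorem~\ref{thm:dft}. There $A$ is diagonalised over $\overline{K}$ with diagonal entries $\lambda_\chi$. Hence the minimal polynomial of $A$ over $\overline{K}$ is $\prod_{\lambda}(x-\lambda)$, the product running over the \emph{distinct} eigenvalues $\lambda\in\{\lambda_\chi\}$. The minimal polynomial of a matrix with entries in $K$ does not change under field extension, since it is determined by the $K$-linear dependence of $I,A,A^2,\dots$, which is preserved by extending scalars. So
\[
m_{G,S,K}(x)=\prod_{\lambda\ \text{distinct}}(x-\lambda)\in K[x].
\]
Because this polynomial has coefficients in $K$, its root set is stable under $\Gal(\overline{K}/K)$ (more precisely $\mathrm{Aut}(\overline{K}/K)$). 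Therefore the eigenvalue set is a union of orbits $\calO\in\calO_K(G,S)$.

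The one point needing care is that each orbit $\calO$ is exactly the root set of $\Psi_{\calO,K}$ and that $\Psi_{\calO,K}$ has no repeated roots. Separability of $\lambda_\chi$ over $K$ holds because $\lambda_\chi\in K(\zeta_N)$, and $K(\zeta_N)/K$ is separable (indeed Galois), being a splitting field of the separable polynomial $x^N-1$ when $\ch(K)\nmid N$. So the conjugates of $\lambda_\chi$ form precisely its orbit, and $\Psi_{\calO,K}=\prod_{\mu\in\calO}(x-\mu)$. Grouping the linear factors of $m_{G,S,K}$ by orbit then gives $m_{G,S,K}=\prod_{\calO}\Psi_{\calO,K}$. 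These factors are pairwise distinct and coprime by Lemma~\ref{lem:coprime}, and the product is squarefree since all the roots are distinct.

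For (ii), combine Proposition~\ref{prop:quotient}, which gives $\calA_K\cong K[x]/(m_{G,S,K})$, with the Chinese Remainder Theorem applied to the pairwise coprime factors from (i). This yields $K[x]/(m)\cong\prod_{\calO}K[x]/(\Psi_{\calO,K})$, which is the diagram of Figure~\ref{fig:cd}. Each factor is a field because $\Psi_{\calO,K}$ is irreducible. Part (iii) is then immediate, either from Proposition~\ref{prop:ss} since $m$ is squarefree, or directly because a finite product of fields has zero Jacobson radical.

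For (iv), take $K=\Q$. Then $\Q[x]/(\Psi_{\calO,\Q})\cong\Q(\lambda_\chi)$ for any $\lambda_\chi\in\calO$, via $x\mapsto\lambda_\chi$. By Corollary~\ref{cor:cyclo}, $\Q(\lambda_\chi)\subseteq\Q(\zeta_N)^+$. Different choices of representative give conjugate, hence isomorphic, embeddings, and since $\Q(\zeta_N)/\Q$ is abelian all of these images lie in the same subfield. The main obstacle is the separability point in (i): the orbit must equal the full root set of $\Psi_{\calO,K}$, with no multiplicities. The argument above handles this via $K(\zeta_N)/K$ being Galois, and the rest is formal.
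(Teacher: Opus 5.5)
Your proposal is correct and follows the paper's route. DFT diagonalisation gives a squarefree minimal polynomial, Galois-stability of its root set groups the linear factors into the orbit polynomials $\Psi_{\calO,K}$, Proposition~\ref{prop:quotient} with the Chinese Remainder Theorem gives (ii)--(iii), and (iv) comes from Corollary~\ref{cor:cyclo}. Your explicit check that each orbit is the full root set of $\Psi_{\calO,K}$ makes precise a step the paper's Step~2 only asserts: you use separability of $K(\zeta_N)/K$, and $\mathrm{Aut}(\overline{K}/K)$ when $K$ is imperfect; the same conclusion also follows from $m_{G,S,K}$ having distinct roots.
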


\begin{proof}
\emph{Step 1: squarefreeness.}  Since $\ch(K) \nmid N$, the polynomial
$x^N - 1$ is separable over $K$, and $A$ is diagonalisable over
$\overline{K}$ (Theorem~\ref{thm:dft}).  A diagonalisable matrix has
squarefree minimal polynomial; equivalently, every irreducible factor of
$m_{G,S,K}$ over $K$ is separable (no repeated roots in $\overline{K}$),
and no irreducible can appear with multiplicity $\ge 2$ in $m_{G,S,K}$
without contradicting diagonalisability.

\emph{Step 2: orbit factorisation.}  The eigenvalues of $A$ over
$\overline{K}$ are exactly $\{\lambda_\chi : \chi \in \Ghat\}$.  The root
set of $m_{G,S,K}$ is Galois-stable, so its irreducible factors over $K$
are precisely $\{\Psi_{\calO,K} : \calO \in \calO_K(G,S)\}$, one per orbit.
This gives part~(i).

\emph{Step 3: Wedderburn.}  Proposition~\ref{prop:quotient} gives
\[
\calA_K(\Cay(G,S)) \cong K[x]/(m_{G,S,K})
= K[x]/\!\bigl(\prod_\calO \Psi_{\calO,K}\bigr).
\]
The factors are pairwise coprime
(Lemma~\ref{lem:coprime}), so the Chinese Remainder Theorem gives part~(ii).
Each factor $K[x]/(\Psi_{\calO,K})$ is a field.

\emph{Step 4: semisimplicity and cyclotomic containment.}  A product of
fields has $\Jac = 0$, giving~(iii).  For~(iv), Corollary~\ref{cor:cyclo}
gives $\lambda_\calO \in \Q(\zeta_N)^+$ for every orbit representative, so
$\Q[x]/(\Psi_{\calO,\Q}) \cong \Q(\lambda_\calO) \subseteq \Q(\zeta_N)^+$.
\end{proof}

\section{Structural Consequences}
\label{sec:struct}

\begin{corollary}[Dimension]
\label{cor:dim}
$\dim_K \calA_K(\Cay(G,S))$ equals the number of distinct eigenvalues of
$A$ over $\overline{K}$.
\end{corollary}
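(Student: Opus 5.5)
The plan is to combine Proposition~\ref{prop:quotient} with part~(i) of Theorem~\ref{thm:main}. Proposition~\ref{prop:quotient} gives $\dim_K \calA_K(\Cay(G,S)) = \deg m_{G,S,K}$, so it suffices to show that $\deg m_{G,S,K}$ equals the number of distinct elements of the set $\Lambda = \{\lambda_\chi : \chi \in \Ghat\} \subseteq \overline{K}$.

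By Theorem~\ref{thm:dft}, $A$ is conjugate over $\overline{K}$ to $\mathrm{diag}(\lambda_{\chi_0},\dots,\lambda_{\chi_{N-1}})$. The minimal polynomial of a matrix does not change under field extension, since it is determined by the $K$-linear dependence of $I, A, A^2, \dots$, a rank condition that is stable under extending scalars. So $m_{G,S,K}$ is also the minimal polynomial of this diagonal matrix over $\overline{K}$, which is $\prod_{\lambda \in \Lambda}(x - \lambda)$. Indeed, a polynomial annihilates a diagonal matrix iff it vanishes at every diagonal entry, and this product is the monic polynomial of least degree with that property. Taking degrees gives $\deg m_{G,S,K} = |\Lambda|$.

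As a cross-check, one can argue via Theorem~\ref{thm:main}(i) instead. There $m_{G,S,K} = \prod_{\calO} \Psi_{\calO,K}$, and each $\Psi_{\calO,K}$ is separable with root set exactly $\calO$. Separability holds because the roots are distinct by squarefreeness over $\overline{K}$. Hence $\deg \Psi_{\calO,K} = |\calO|$, and summing over the partition $\calO_K(G,S)$ of $\Lambda$ gives $|\Lambda|$.

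The main point needing care is the separability of each $\Psi_{\calO,K}$ in positive characteristic. A squarefree polynomial over $K$ could in principle have an inseparable irreducible factor. The direct argument above sidesteps this issue: it uses only that $A$ is diagonalisable over $\overline{K}$ and that the minimal polynomial is invariant under the extension $K \subseteq \overline{K}$. I will present that argument as the proof.
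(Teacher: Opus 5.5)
Your proof is correct, and your main argument takes a different route from the paper's. The paper's proof is the one-line chain $\dim_K \calA_K(\Cay(G,S)) = \deg m_{G,S,K} = \sum_{\calO}|\calO|$. It reads the degree off the orbit factorisation of Theorem~\ref{thm:main}(i), tacitly using $\deg \Psi_{\calO,K} = |\calO|$; this is essentially your ``cross-check''.

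Your primary argument never uses Galois orbits. The minimal polynomial is invariant under the base change $K \subseteq \overline{K}$, and $A$ is diagonalised by Theorem~\ref{thm:dft}; together these give $m_{G,S,K} = \prod_{\lambda \in \Lambda}(x-\lambda)$ directly. This is more elementary and more general: it works verbatim for any matrix over $K$ that is diagonalisable over $\overline{K}$. It also makes explicit the distinct-roots fact that the paper's one-liner leaves implicit. In exchange, the paper's version expresses the dimension as a sum of the field degrees $[K[x]/(\Psi_{\calO,K}):K]$ of the Wedderburn factors.

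Your separability concern is valid for squarefree polynomials in general, but it causes no trouble here in either route. Every $\lambda_\chi$ lies in $K(\zeta_N)$, which is separable over $K$ when $\ch(K) \nmid N$. Independently, each $\Psi_{\calO,K}$ divides a polynomial with distinct roots in $\overline{K}$, so it is separable.
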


\begin{proof}
$\dim = \deg m_{G,S,K} = \sum_{\calO} |\calO|
= $ number of distinct eigenvalues.
\end{proof}

\begin{corollary}[Idempotent count]
\label{cor:idem}
$|\Idem(\calA_K(\Cay(G,S)))| = 2^r$ where $r = |\calO_K(G,S)|$.
\end{corollary}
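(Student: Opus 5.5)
The plan is to move the count across the isomorphism of Theorem~\ref{thm:main}(ii), then count idempotents in a finite product of fields. A $K$-algebra isomorphism $\calA_K(\Cay(G,S)) \cong \prod_{\calO} L_\calO$, with $L_\calO = K[x]/(\Psi_{\calO,K})$, restricts to a bijection between the idempotent sets, since $e^2=e$ is preserved by ring isomorphisms in both directions. So it suffices to show that $|\Idem(\prod_{\calO} L_\calO)| = 2^r$, where there are $r = |\calO_K(G,S)|$ factors.

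In a direct product, operations are componentwise. Hence $e = (e_\calO)_\calO$ is idempotent if and only if every $e_\calO$ is idempotent in $L_\calO$. Each $L_\calO$ is a field, because $\Psi_{\calO,K}$ is irreducible. In a field, $e^2=e$ gives $e(e-1)=0$, and since there are no zero divisors, $e\in\{0,1\}$. Each coordinate therefore has exactly two independent choices, which yields $2^r$ idempotents. Equivalently, the idempotents are exactly the sums $\sum_{\calO\in T} \varepsilon_\calO$ over subsets $T \subseteq \calO_K(G,S)$, where $\varepsilon_\calO$ is the standard primitive idempotent of the $\calO$-th factor. Distinct subsets give distinct idempotents because they differ in some coordinate.

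There is no substantive obstacle. The one point needing care is that the $r$ factors really are fields and that there are exactly $r$ of them. Both facts come from Theorem~\ref{thm:main}(i): the factors $\Psi_{\calO,K}$ are irreducible, and they are pairwise distinct, one per orbit, by Lemma~\ref{lem:coprime}. Without this, a nonreduced or repeated factor could change the count. The degenerate case $r=1$ gives $\{0,1\}$, which is consistent with the formula.
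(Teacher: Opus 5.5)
Your proposal is correct and follows the paper's proof: transfer the count along the isomorphism of Theorem~\ref{thm:main}(ii), then observe that the idempotents of a product of $r$ fields are exactly the $\{0,1\}$-tuples. The paper states this in one line. You also supply the details it leaves implicit: $e(e-1)=0$ in a field forces $e\in\{0,1\}$, and part (i) guarantees there are exactly $r$ field factors.
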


\begin{proof}
A product of $r$ fields has $2^r$ idempotents: each is a binary $r$-tuple
with entries in $\{0, 1\}$ independently.
\end{proof}

\begin{corollary}[Homomorphisms to $K$]
\label{cor:homs}
The number of $K$-algebra homomorphisms from
$\calA_K(\Cay(G,S))$ to $K$ equals the number of orbits
$\calO$ for which $\Psi_{\calO,K}$ has degree $1$, that is,
the number of eigenvalues already in $K$.
\end{corollary}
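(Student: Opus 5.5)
The plan is to reduce the count to homomorphisms out of each field factor, using the decomposition of Theorem~\ref{thm:main}(ii):
\[
\calA_K(\Cay(G,S)) \cong \prod_{\calO} L_\calO, \qquad L_\calO := K[x]/(\Psi_{\calO,K}).
\]
The first step is to describe unital $K$-algebra homomorphisms $\phi\colon \prod_{\calO} L_\calO \to K$. Write $e_\calO$ for the primitive idempotents. Their images $\phi(e_\calO)$ are idempotents of the field $K$, so each is $0$ or $1$. They are pairwise orthogonal and sum to $1$, so exactly one of them equals $1$. If $\phi(e_{\calO_0}) = 1$, then $\phi$ kills $(1-e_{\calO_0})$, which is the kernel of the projection onto $L_{\calO_0}$. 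Hence $\phi$ factors uniquely through that projection. This gives a bijection between homomorphisms to $K$ and the disjoint union over $\calO$ of $\Hom_{K\text{-alg}}(L_\calO, K)$.

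The second step counts homomorphisms out of a single factor. A $K$-algebra homomorphism $L_\calO = K[x]/(\Psi_{\calO,K}) \to K$ is determined by the image $a$ of $\bar{x}$, and $a$ must be a root of $\Psi_{\calO,K}$ lying in $K$. Since $\Psi_{\calO,K}$ is irreducible over $K$, it has a root in $K$ exactly when its degree is $1$. In that case the root is unique, so there is exactly one homomorphism; otherwise there is none. Summing over orbits shows that the number of homomorphisms equals the number of orbits with $\deg \Psi_{\calO,K} = 1$.

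Finally, $\deg \Psi_{\calO,K} = |\calO|$, because by Step 1 of Theorem~\ref{thm:main} the irreducible factors are separable with root set exactly $\calO$. So degree $1$ means the orbit is a singleton $\{\lambda\}$ fixed by $\Gal(\overline{K}/K)$, i.e.\ its minimal polynomial is $x-\lambda$ and $\lambda \in K$. Conversely, each eigenvalue in $K$ forms its own orbit. This identifies the count with the number of distinct eigenvalues lying in $K$.

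The only real subtlety is in the first step: one must insist that homomorphisms are unital, so that the zero map is excluded and exactly one idempotent maps to $1$. A minor point is that being fixed by the Galois group gives $\lambda \in K$ only for separable elements. This holds here because $\lambda$ is a sum of roots of the separable polynomial $x^N-1$, or more simply because $\Psi_{\calO,K}$ of degree $1$ directly gives $\lambda \in K$.
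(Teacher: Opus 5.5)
Your proof is correct and follows the paper's route: via Theorem~\ref{thm:main}(ii), a $K$-algebra map to $K$ kills all but one factor $L_\calO$ and embeds that factor into $K$, and this happens (in exactly one way) iff $\deg \Psi_{\calO,K}=1$. Your idempotent argument (unitality, and orthogonality forcing exactly one $e_\calO$ to map to $1$) and your root count in $K[x]/(\Psi_{\calO,K})$ make explicit the factoring-through-a-projection step and the uniqueness that the paper's one-line proof leaves implicit.
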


\begin{proof}
A $K$-algebra map $\prod L_\calO \to K$ annihilates all but one factor and
embeds that factor into $K$; such an embedding exists iff $L_\calO = K$.
\end{proof}

Figure~\ref{fig:dim-bars} visualises the dimension and orbit structure for
several families of abelian Cayley graphs.

\begin{figure*}[ht]
\centering
\begin{tikzpicture}[scale=1.0]
\foreach \lbl/\nb/\gb/\tb/\tot/\yi in {
  {$C_5$}/1/2/0/3/15,
  {$C_7$}/1/0/3/4/14,
  {$C_{11}$}/1/0/5/6/13,
  {$C_{12}$}/5/2/0/7/12,
  {$C_{15}$}/2/2/4/8/11,
  {$Q_2$}/3/0/0/3/10,
  {$Q_3$}/4/0/0/4/9,
  {$Q_4$}/5/0/0/5/8,
  {$Q_5$}/6/0/0/6/7,
  {$\Z/6{\times}\Z/2$}/7/0/0/7/6,
  {$\Z/5{\times}\Z/2$}/2/4/0/6/5,
  {$\Z/4{\times}\Z/4$}/5/0/0/5/4,
  {$\Z/3{\times}\Z/3$}/3/0/0/3/3,
  {$\Z/5{\times}\Z/5$}/2/4/0/6/2,
  {$C_4{\square}C_6$}/9/0/0/9/1%
}{%
  \pgfmathsetmacro\yb{\yi * 0.67}%
  \pgfmathsetmacro\yt{\yb + 0.50}%
  \pgfmathsetmacro\xna{\nb * 0.88}%
  \pgfmathsetmacro\xga{(\nb + \gb) * 0.88}%
  \pgfmathsetmacro\xta{(\nb + \gb + \tb) * 0.88}%
  \pgfmathsetmacro\ym{\yb + 0.25}%
  \fill[mynavy] (0,\yb) rectangle (\xna,\yt);
  \fill[mygold] (\xna,\yb) rectangle (\xga,\yt);
  \fill[myteal] (\xga,\yb) rectangle (\xta,\yt);
  \draw[black,very thin] (0,\yb) rectangle (\xta,\yt);
  \node[left,font=\scriptsize] at (-0.12,\ym) {\lbl};
  \node[right,font=\scriptsize,mynavy] at (\xta+0.10,\ym) {$\tot$};
}
\draw[->,thin] (0,0) -- (9.5,0) node[right,font=\footnotesize] {$\dim_K\calA_K$};
\foreach \x in {0,2,4,6,8}{
  \pgfmathsetmacro\xx{\x*0.88}
  \draw[thin] (\xx,-0.10) -- (\xx,0.10);
  \node[below,font=\scriptsize] at (\xx,-0.12) {$\x$};
}
\fill[mynavy] (0,-1.30) rectangle (0.70,-0.90);
\node[right,font=\footnotesize] at (0.80,-1.10) {rational orbits ($\Psi$ linear)};
\fill[mygold] (4.80,-1.30) rectangle (5.50,-0.90);
\node[right,font=\footnotesize] at (5.60,-1.10) {quadratic orbits};
\fill[myteal] (9.20,-1.30) rectangle (9.90,-0.90);
\node[right,font=\footnotesize] at (10.00,-1.10) {higher degree};
\end{tikzpicture}
\caption{Dimension and Galois orbit structure of $\calA_K(\Cay(G,S))$ for
  fifteen abelian Cayley graphs.  Bar length equals
  $\dim_K \calA_K = $ number of distinct eigenvalues; colours indicate
  orbit type (navy: rational eigenvalue, gold: quadratic irrational, teal:
  higher-degree extension).  Hamming cubes $Q_k$ have all-rational spectra;
  the cyclic primes $C_p$ for $p \ge 7$ require cubic and higher extensions;
  elementary abelian groups $(\Z/5)^2$ and $(\Z/5)^k$ require quadratic
  extensions of $\Q(\sqrt{5})$.  Notation: $C_4\,\square\,C_6$ denotes the
  Cartesian product (grid graph).}
\label{fig:dim-bars}
\end{figure*}

\section{Primitive Idempotents}
\label{sec:idem}

\begin{proposition}[Bezout construction of primitive idempotents]
\label{prop:idem}
For each $\calO \in \calO_K(G,S)$, set $M_\calO = m_{G,S,K}/\Psi_{\calO,K}$.
The extended Euclidean algorithm yields $u_\calO \in K[x]$ with
$M_\calO u_\calO \equiv 1 \pmod{\Psi_{\calO,K}}$.  The element
$e_\calO \coloneqq M_\calO(A) u_\calO(A) \in \calA_K(\Cay(G,S))$
is the unique primitive idempotent satisfying
$e_\calO^2 = e_\calO$, $e_\calO e_{\calO'} = 0$ for $\calO \ne \calO'$,
and $\sum_\calO e_\calO = I$.
The direct summand satisfies $e_\calO \cdot \calA_K(\Cay(G,S)) \cong K[x]/(\Psi_{\calO,K})$.
\end{proposition}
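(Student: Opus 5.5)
I plan to work entirely on the polynomial side and then transport everything to $\calA_K(\Cay(G,S))$ through the isomorphism $\calA_K(\Cay(G,S)) \cong K[x]/(m_{G,S,K})$ of Proposition~\ref{prop:quotient}, composed with the Chinese Remainder isomorphism
\[
K[x]/(m_{G,S,K}) \;\to\; \prod_{\calO} K[x]/(\Psi_{\calO,K})
\]
from Theorem~\ref{thm:main}(ii). First, the existence of $u_\calO$ follows from Theorem~\ref{thm:main}(i) and Lemma~\ref{lem:coprime}. Since $m_{G,S,K}$ is the squarefree product of the pairwise coprime irreducibles $\Psi_{\calO',K}$, the cofactor $M_\calO = \prod_{\calO' \ne \calO} \Psi_{\calO',K}$ is coprime to $\Psi_{\calO,K}$. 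Bezout in the PID $K[x]$ then gives $u_\calO, v_\calO$ with $M_\calO u_\calO + \Psi_{\calO,K} v_\calO = 1$.

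Next I compute the CRT image of $E_\calO := M_\calO u_\calO$. Modulo $\Psi_{\calO,K}$ it is $1$ by construction. Modulo $\Psi_{\calO',K}$ for $\calO' \ne \calO$ it is $0$, because $\Psi_{\calO',K}$ divides $M_\calO$. So $E_\calO$ maps to the standard unit vector $\epsilon_\calO$ in the product. The identities $\epsilon_\calO^2 = \epsilon_\calO$, $\epsilon_\calO\epsilon_{\calO'} = 0$ and $\sum_\calO \epsilon_\calO = 1$ hold componentwise. Because the composite map $f \mapsto f(A)$ followed by the identifications is a $K$-algebra isomorphism, the corresponding relations hold for $e_\calO = M_\calO(A)u_\calO(A)$. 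In polynomial terms, $E_\calO^2 - E_\calO$, $E_\calO E_{\calO'}$ and $\sum_\calO E_\calO - 1$ all lie in $(m_{G,S,K})$, hence vanish at $A$. The summand is $e_\calO\,\calA_K \cong \epsilon_\calO \prod L_{\calO'} = L_\calO \cong K[x]/(\Psi_{\calO,K})$, which is a field. This also gives primitivity: a field has no idempotents other than $0$ and $1$, so $e_\calO$ cannot split as a sum of two nonzero orthogonal idempotents.

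The step needing most care is uniqueness, which I read as follows: $\{e_\calO\}$ is the unique complete set of pairwise orthogonal primitive idempotents, and $e_\calO$ is the one whose summand is $K[x]/(\Psi_{\calO,K})$. By Corollary~\ref{cor:idem} every idempotent of $\prod_\calO L_\calO$ is a $0/1$ vector. A primitive idempotent is a nonzero idempotent that cannot be written as a sum of two nonzero orthogonal idempotents, so in the product it has exactly one nonzero coordinate. The primitive idempotents are therefore exactly the $\epsilon_\calO$. I would label them by orbit using $e_\calO\,\Psi_{\calO,K}(A) = 0$, with $e_\calO$ the only primitive idempotent with this property. Indeed, $\epsilon_{\calO'}$ multiplied by the image of $\Psi_{\calO,K}$ has $\calO'$-component $\Psi_{\calO,K}(\lambda_{\calO'}) \ne 0$ when $\calO' \ne \calO$, by Lemma~\ref{lem:coprime}. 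Finally, $e_\calO$ does not depend on the choice of $u_\calO$. Changing $u_\calO$ by a multiple of $\Psi_{\calO,K}$ changes $M_\calO u_\calO$ by a multiple of $m_{G,S,K}$, which vanishes at $A$.
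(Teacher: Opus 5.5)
Your proposal is correct and follows the paper's own argument. Both compute the image of $e_\calO$ under the CRT projections $\pi_{\calO'}$: it is zero when $\Psi_{\calO',K} \mid M_\calO$ and one by the Bezout relation, and the idempotent relations are then read off componentwise. You also correctly supply points the paper leaves implicit: coprimality of $M_\calO$ and $\Psi_{\calO,K}$, primitivity via the summand being a field, a precise reading of uniqueness (the $\epsilon_\calO$ are the only primitive idempotents, labelled by $e_\calO\Psi_{\calO,K}(A)=0$), and independence from the choice of $u_\calO$.
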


\begin{proof}
Under the $\calO'$-th projection $\pi_{\calO'} : \calA_K \to
K[x]/(\Psi_{\calO',K})$: if $\calO' \ne \calO$ then $\Psi_{\calO',K} \mid
M_\calO$, so $\pi_{\calO'}(e_\calO) = 0$; if $\calO' = \calO$ then the
Bezout relation evaluated at any root of $\Psi_{\calO,K}$ gives
$\pi_\calO(e_\calO) = 1$.  The idempotent relations follow by applying each
projection independently.
\end{proof}

\begin{example}[Hamming cube $Q_2$]
\label{ex:Q2-idem}
$Q_2 = \Cay((\Z/2)^2, \{e_1, e_2\})$ has three distinct eigenvalues:
$2$, $0$, $-2$ with minimal polynomial $(x-2)x(x+2) = x^3 - 4x$.
Complementary products: $M_2 = x(x+2)$, $M_0 = x^2-4$, $M_{-2} = x(x-2)$.
Since $M_2(2) = 8$, $M_0(0) = -4$, $M_{-2}(-2) = 8$, the idempotents are
\begin{align*}
  e_2    &= A(A+2I)/8, \\
  e_0    &= -(A^2-4I)/4, \\
  e_{-2} &= A(A-2I)/8.
\end{align*}
One verifies $e_2 + e_0 + e_{-2} = I$ and $e_i e_j = 0$ for $i \ne j$
from the Cayley--Hamilton identity $A^3 - 4A = 0$.
\end{example}

\begin{example}[Explicit idempotents for $\Cay(\Z/5 \times \Z/2, S)$]
\label{ex:Z5Z2-idem}
From Example~\ref{ex:Z5Z2}, the four Galois orbits have minimal polynomials
$x-3$, $x-1$, $\Psi_A = x^2-x-1$, and $\Psi_B = x^2+3x+1$.
(Orbit $A$ has roots $(\sqrt{5}\pm 1)/2$ with sum $1$ and product $-1$;
orbit $B$ has roots $(\sqrt{5}-3)/2$ and $-(3+\sqrt{5})/2$ with sum $-3$
and product $1$, giving $\Psi_B = x^2+3x+1$.)
The full minimal polynomial is
$m = (x-3)(x-1)(x^2-x-1)(x^2+3x+1)$.

For the orbit $A$ with $\Psi_A = x^2-x-1$, the complementary product is
$M_A(x) = (x-3)(x-1)(x^2+3x+1)$.  In $\Q[x]/(x^2-x-1)$ one has $x^2
\equiv x+1$, so
\begin{align*}
  (x-3)(x-1) &\equiv -3x+4, \\
  x^2+3x+1   &\equiv 4x+2,  \\
  M_A(x) &\equiv (-3x+4)(4x+2) \\
         &= -12x^2+10x+8 \equiv -2x-4.
\end{align*}
The Bezout inverse of $-2(x+2)$ modulo $x^2-x-1$ is $(x-3)/10$
(verified: $-2(x+2)\cdot(x-3)/10 = -(x^2-x-6)/5 \equiv 1$).
The primitive idempotent for orbit $A$ is therefore
\begin{align*}
  e_{\calO_A} &= M_A(A) \cdot (A-3I)/10, \\
  M_A(A) &= (A-3I)(A-I)(A^2+3A+I).
\end{align*}
All entries of this $10 \times 10$ rational matrix are computable from
powers of $A$ up to degree $5$.  The idempotent for orbit $B$ is computed
analogously using $M_B = (x-3)(x-1)(x^2-x-1)$ and its Bezout inverse
modulo $x^2+3x+1$.
\end{example}

\section{The Cyclic Group Case}
\label{sec:cyclic}

When $G = \Z/n$ and $S = \{1, -1\}$, the graph $\Cay(G,S) = C_n$ (the
cycle graph).  The character group $\widehat{\Z/n}$ has characters
$\chi_k(j) = \zeta_n^{kj}$, giving eigenvalues $\lambda_k =
\zeta_n^k + \zeta_n^{-k} = 2\cos(2\pi k/n)$.

\begin{theorem}[Cyclic case]
\label{thm:cyclic}
For $G = \Z/n$, $S = \{1, -1\}$:
\[
  \calA_\Q(C_n) \;\cong\; \prod_{d \mid n} \Q(\zeta_d)^+,
\]
where $\Q(\zeta_d)^+ = \Q(2\cos(2\pi/d))$ and the product runs over all
positive divisors $d$ of $n$.  The dimension is $\lfloor n/2 \rfloor + 1$
and the idempotent count is $2^{\tau(n)}$.
\end{theorem}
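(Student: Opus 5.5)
We apply Theorem~\ref{thm:main} with $K=\Q$, $G=\Z/n$ and $S=\{1,-1\}$, so $N=n$ and $\ch(\Q)=0\nmid n$. Then $\calA_\Q(C_n)\cong\prod_{\calO}\Q[x]/(\Psi_{\calO,\Q})\cong\prod_\calO \Q(\lambda_\calO)$. It remains to show three things: the Galois orbits on the eigenvalue set are indexed by the divisors $d\mid n$, the orbit attached to $d$ generates $\Q(\zeta_d)^+$, and the dimension and idempotent counts follow from Corollaries~\ref{cor:dim} and~\ref{cor:idem}.

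\emph{Step 1: the distinct eigenvalues.} The eigenvalues are $\lambda_k=2\cos(2\pi k/n)$ for $k\in\Z/n$. Since $\lambda_k=\lambda_{-k}$, we may take $0\le k\le\lfloor n/2\rfloor$. On this range $2\pi k/n\in[0,\pi]$, where cosine is injective, so the values are pairwise distinct. Hence there are exactly $\lfloor n/2\rfloor+1$ distinct eigenvalues, and Corollary~\ref{cor:dim} gives the dimension.

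\emph{Step 2: the orbits are indexed by divisors.} For $0\le k\le\lfloor n/2\rfloor$, write $k/n=j/d$ in lowest terms, so $d=n/\gcd(k,n)$ is a divisor of $n$ and $\gcd(j,d)=1$. Then $\lambda_k=\zeta_d^j+\zeta_d^{-j}$, where $\zeta_d=\zeta_n^{n/d}$. By Lemma~\ref{lem:galois-action}, applied to the automorphism $\sigma_a$ with $a\equiv j \pmod d$ (lifted to a unit mod $n$), $\lambda_k$ is a Galois conjugate of $\eta_d:=\zeta_d+\zeta_d^{-1}$. Conversely, every conjugate $\sigma_a(\eta_d)=\zeta_d^a+\zeta_d^{-a}$ with $a\in(\Z/n)^*$ equals some $\lambda_k$ whose reduced fraction $k/n$ has denominator exactly $d$. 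So the orbit of $\eta_d$ is exactly the set of $\lambda_k$ with $n/\gcd(k,n)=d$.

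These sets are disjoint for distinct $d$. This is the one point that needs care. It follows from the injectivity in Step 1: the reduced fraction $j/d\in[0,1/2]$ is determined by the value $\cos(2\pi j/d)$, so the value determines $d$. Thus $r=\tau(n)$ and the orbits are $\{\calO_d\}_{d\mid n}$.

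\emph{Step 3: identifying the factors.} The factor attached to $\calO_d$ is $\Q(\eta_d)=\Q(2\cos(2\pi/d))=\Q(\zeta_d)^+$. For $d=1,2$ this is $\Q$, corresponding to the eigenvalues $2$ and $-2$. Corollary~\ref{cor:idem} then gives $2^{\tau(n)}$ idempotents.

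As a consistency check, $[\Q(\zeta_d)^+:\Q]$ equals $\varphi(d)/2$ for $d\ge3$ and equals $1$ for $d\le2$. The identity $\sum_{d\mid n}\varphi(d)=n$ then gives $\sum_{d\mid n}\deg\Psi_{\calO_d}=\lfloor n/2\rfloor+1$, in agreement with Step 1. I expect the main obstacle to be the bookkeeping in Step 2. One must check that every $a$ coprime to $d$ lifts to a unit modulo $n$, so that the full orbit is realised. One must also handle $d\le2$ separately, since there $\zeta_d=\zeta_d^{-1}$ and the degree formula $\varphi(d)/2$ fails.
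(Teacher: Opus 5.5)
Your proposal is correct and follows essentially the same route as the paper: apply Theorem~\ref{thm:main}, use Lemma~\ref{lem:galois-action} to show that $\lambda_k$ and $\lambda_{k'}$ are conjugate exactly when $\gcd(k,n)=\gcd(k',n)$, identify each orbit field with $\Q(\zeta_d)^+$, and read off $r=\tau(n)$. The only minor difference is that you get the dimension directly from injectivity of cosine on $[0,\pi]$, which also settles disjointness of the orbits (a point the paper leaves implicit), whereas the paper gets it from the identity $\sum_{d\mid n}\max(1,\varphi(d)/2)=\lfloor n/2\rfloor+1$, which you use only as a consistency check.
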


\begin{proof}
By Lemma~\ref{lem:galois-action}, $\sigma_a(2\cos(2\pi k/n)) =
2\cos(2\pi ak/n)$.  Two eigenvalues $2\cos(2\pi k/n)$ and
$2\cos(2\pi k'/n)$ are $\Q$-conjugate iff $ak \equiv \pm k' \pmod{n}$
for some $a \in (\Z/n\Z)^*$, which holds iff $\gcd(k,n) = \gcd(k',n)$,
i.e., iff $k$ and $k'$ have the same exact order $d = n/\gcd(k,n)$ for
$\zeta_n^k$.  The orbit indexed by $d \mid n$ is
$\calO_d = \{2\cos(2\pi j/d) : \gcd(j,d) = 1,\, 1 \le j \le \lfloor d/2\rfloor\}$,
with minimal polynomial $\Psi_d(x)$ of degree $\max(1, \varphi(d)/2)$.
The identification $\Q[x]/(\Psi_d) \cong \Q(\zeta_d)^+$ and the dimension
count $\sum_{d \mid n} \max(1,\varphi(d)/2) = \lfloor n/2\rfloor + 1$
complete the proof.  The idempotent count $2^{\tau(n)}$ follows from
$|\calO_\Q(G,S)| = \tau(n)$.
\end{proof}

\begin{example}[Selected cyclic decompositions]
$\calA_\Q(C_{10}) \cong \Q^2 \times \Q(\sqrt{5})^2$; dimension $6$,
idempotents $2^4 = 16$.
$\calA_\Q(C_{12}) \cong \Q^5 \times \Q(\sqrt{3})$; dimension $7$,
idempotents $2^6 = 64$.
$\calA_\Q(C_{30}) \cong \Q^4 \times \Q(\sqrt{5})^2
\times \Q(\zeta_{15})^+ \times \Q(\zeta_{30})^+$; dimension $16$,
idempotents $2^8 = 256$.
\end{example}

\section{Cartesian Products of Cayley Graphs}
\label{sec:products}

The Cartesian product $\Gamma_1 \,\square\, \Gamma_2$ of two graphs has
vertex set $V_1 \times V_2$ and edges
edges $\{(u_1,u_2),(v_1,u_2)\}$
when $u_1 v_1 \in E_1$, and
$\{(u_1,u_2),(u_1,v_2)\}$ when $u_2 v_2 \in E_2$.
Its adjacency matrix is $A_1 \otimes I_2 + I_1 \otimes A_2$.

For abelian Cayley graphs, this arises from the direct product:
if $G = G_1 \times G_2$ and $S = (S_1 \times \{0\}) \cup (\{0\} \times S_2)$,
then $\Cay(G,S) = \Cay(G_1,S_1) \,\square\, \Cay(G_2,S_2)$.

\begin{proposition}[Eigenvalues of Cartesian products]
\label{prop:cartesian-eig}
With notation as above, the eigenvalues of $A_{G_1 \times G_2}
= A_1 \otimes I_2 + I_1 \otimes A_2$ are
\[
  \lambda_{(\chi_1,\chi_2)} = \lambda_{\chi_1}^{(1)} + \lambda_{\chi_2}^{(2)},
  \qquad \chi_i \in \Ghat_i,
\]
and these all lie in $\Q(\zeta_N)^+$ where $N = |G_1||G_2|$.
\end{proposition}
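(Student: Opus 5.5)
The plan is to apply Theorem~\ref{thm:dft} to the group $G = G_1 \times G_2$, using the fact that its characters factor as products. Every character $\chi \in \Ghat$ has the form $\chi(g_1,g_2) = \chi_1(g_1)\chi_2(g_2)$ with $\chi_i \in \Ghat_i$. Conversely, every such pair $(\chi_1,\chi_2)$ gives a character, and these are pairwise distinct. Since $|\Ghat| = |G_1||G_2|$, the map $\Ghat_1 \times \Ghat_2 \to \Ghat$ is a bijection. I will justify this briefly by restricting $\chi$ to the subgroups $G_1 \times \{0\}$ and $\{0\} \times G_2$.

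Next I compute the character sum over $S = (S_1 \times \{0\}) \cup (\{0\} \times S_2)$. This union is disjoint because $0 \notin S_1$ and $0 \notin S_2$, so
\[
\lambda_{(\chi_1,\chi_2)} = \sum_{s_1 \in S_1} \chi_1(s_1)\chi_2(0) + \sum_{s_2 \in S_2} \chi_1(0)\chi_2(s_2) = \lambda^{(1)}_{\chi_1} + \lambda^{(2)}_{\chi_2}.
\]
By Theorem~\ref{thm:dft} applied to $G$, these are exactly the eigenvalues of $A_G$. The adjacency matrix of $\Cay(G,S)$ equals $A_1 \otimes I_2 + I_1 \otimes A_2$ under the lexicographic vertex ordering, which matches the identification of the product graph already stated in the text. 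Alternatively, I can verify the eigenvalues directly: if $v_1, v_2$ are the character eigenvectors of $A_1, A_2$, then
\[
(A_1 \otimes I_2 + I_1 \otimes A_2)(v_1 \otimes v_2) = (\lambda^{(1)} + \lambda^{(2)})\, v_1 \otimes v_2,
\]
and the vectors $v_1 \otimes v_2$ form a basis because $F_1 \otimes F_2$ is invertible.

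For the containment, let $N = |G_1||G_2|$. Each $\lambda^{(i)}_{\chi_i}$ lies in $\Q(\zeta_{|G_i|})^+$ by Corollary~\ref{cor:cyclo}. Since $|G_i|$ divides $N$, we have $\Q(\zeta_{|G_i|}) \subseteq \Q(\zeta_N)$, and intersecting with $\R$ gives $\Q(\zeta_{|G_i|})^+ \subseteq \Q(\zeta_N)^+$. The sum of the two eigenvalues therefore lies in $\Q(\zeta_N)^+$. More simply, Corollary~\ref{cor:cyclo} applies directly to the symmetric set $S \subseteq G$ with $|G| = N$; symmetry of $S$ follows from $S_i = -S_i$.

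None of these steps should be a real obstacle. The only care needed is in confirming that $S \cap \{0\} = \emptyset$ and that $S$ is symmetric, so that the earlier results apply verbatim, and in checking that the union defining $S$ is disjoint, so that no terms are double-counted in the character sum.
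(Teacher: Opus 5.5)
Your proof is correct and follows essentially the same route as the paper: factor the characters of $G_1 \times G_2$ as products, evaluate the character sum over $S$, and conclude via $\Q(\zeta_{N_i})^+ \subseteq \Q(\zeta_N)^+$. The extra points you add are sound and make explicit what the paper leaves implicit: the bijectivity of $\Ghat_1 \times \Ghat_2 \to \Ghat$, the disjointness of $S_1 \times \{0\}$ and $\{0\} \times S_2$, the tensor-eigenvector check, and the direct appeal to Corollary~\ref{cor:cyclo} on $G$.
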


\begin{proof}
The characters of $G_1 \times G_2$ are $\chi_{(\chi_1,\chi_2)}(g_1,g_2)
= \chi_1(g_1)\chi_2(g_2)$.  Evaluating the character sum over
$S = (S_1 \times \{0\}) \cup (\{0\} \times S_2)$ gives
$\lambda_{(\chi_1,\chi_2)} = \sum_{s_1 \in S_1} \chi_1(s_1)
+ \sum_{s_2 \in S_2} \chi_2(s_2)
= \lambda_{\chi_1}^{(1)} + \lambda_{\chi_2}^{(2)}$.
Since $\lambda_{\chi_i}^{(i)} \in \Q(\zeta_{N_i})^+ \subseteq \Q(\zeta_N)^+$,
the sum lies in $\Q(\zeta_N)^+$.
\end{proof}

\begin{proposition}[Tensor product comparison]
\label{prop:tensor}
Let $G = G_1 \times G_2$ and $S = S_1 \oplus S_2$ as above.  There is an
injective $K$-algebra homomorphism
\[
\iota : \calA_K(\Cay(G,S)) \hookrightarrow
\calA_K(\Cay(G_1,S_1)) \otimes_K \calA_K(\Cay(G_2,S_2)),
\]
where $G = G_1\times G_2$ and $S = S_1\oplus S_2$.
In particular, writing $d_i = \dim_K \calA_K(\Cay(G_i,S_i))$,
\[
  \dim_K \calA_K(\Cay(G,S)) \;\le\; d_1 \cdot d_2,
\]
with equality if and only if all pairwise sums $\lambda^{(1)} +
\lambda^{(2)}$ of distinct eigenvalues of $A_1$ and $A_2$
are themselves distinct (as elements of $\overline{K}$).
\end{proposition}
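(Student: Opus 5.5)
The plan is to realise everything inside $M_{N_1}(K)\otimes_K M_{N_2}(K) = M_{N_1N_2}(K)$. The two subalgebras $\calA_1 = K[A_1]$ and $\calA_2 = K[A_2]$ sit inside $M_{N_1}(K)$ and $M_{N_2}(K)$. The natural map $\calA_1\otimes_K\calA_2 \to M_{N_1}(K)\otimes_K M_{N_2}(K)$ is injective, because tensor products of injections of $K$-vector spaces are injective. We therefore identify $\calA_1\otimes_K\calA_2$ with the subalgebra spanned by the matrices $f(A_1)\otimes g(A_2)$. The adjacency matrix $A = A_1\otimes I_2 + I_1\otimes A_2$ lies in this subalgebra. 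Since the subalgebra is closed under products, every polynomial $h(A)$ lies in it as well. Hence $\calA_K(\Cay(G,S)) = K[A]\subseteq \calA_1\otimes_K\calA_2$, and $\iota$ is simply this inclusion. It is injective and a $K$-algebra map by construction. The bound $\dim_K\calA_K(\Cay(G,S))\le d_1d_2$ follows immediately, since $\dim(\calA_1\otimes\calA_2)=d_1d_2$.

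For the equality criterion, I use the structure theory from Theorem~\ref{thm:main} after extending scalars to $\overline{K}$. Base change preserves dimensions, and $\overline{K}[A]=K[A]\otimes_K\overline{K}$ because the minimal polynomial does not change under field extension. By Corollary~\ref{cor:dim}, $d_i$ equals the number of distinct eigenvalues of $A_i$. Likewise, $\dim K[A]$ equals the number of distinct eigenvalues of $A$. By Proposition~\ref{prop:cartesian-eig}, those eigenvalues form the set $\{\lambda^{(1)}+\lambda^{(2)}\}$, where $\lambda^{(1)}$ and $\lambda^{(2)}$ range over the spectra of $A_1$ and $A_2$. Note that Proposition~\ref{prop:cartesian-eig} is phrased over $\C$, but the same character computation works verbatim in $\overline{K}$ using Theorem~\ref{thm:dft}.

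This set has at most $d_1d_2$ elements. It has exactly $d_1d_2$ elements if and only if the sum map from (distinct eigenvalues of $A_1$) $\times$ (distinct eigenvalues of $A_2$) is injective. That is precisely the stated condition. Thus $\dim K[A]=d_1d_2$ holds if and only if the pairwise sums are distinct. In that case $\iota$ is an isomorphism, since it is an injection between spaces of equal finite dimension.

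The step I expect to be the main obstacle is justifying the identification over $\overline{K}$ when $\ch K\mid N_i$ is not excluded. I will assume $\ch(K)\nmid N_1N_2$, as in the standing hypotheses, so that $A_1$, $A_2$ and $A$ are all diagonalisable. Without diagonalisability, the dimension of $K[A]$ is $\deg m$ rather than the number of distinct eigenvalues, and the counting argument breaks. The only other point needing care is that Corollary~\ref{cor:dim} counts eigenvalues in $\overline{K}$ and not in $\C$. For that reason I will phrase the whole argument with character values in $\overline{K}$, following Theorem~\ref{thm:dft}.
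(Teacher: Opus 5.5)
Your proposal is correct and follows the paper's own route: you realise $\calA_K(\Cay(G_1,S_1))\otimes_K\calA_K(\Cay(G_2,S_2))$ as the span of the matrices $f(A_1)\otimes g(A_2)$ inside $M_{N_1N_2}(K)$, observe that $A=A_1\otimes I_2+I_1\otimes A_2$ lies there, and read off the equality case by counting distinct eigenvalue sums via Corollary~\ref{cor:dim}. You are somewhat more careful than the paper in two places: you justify injectivity (tensor products of injections over a field are injective), and you state the hypothesis $\ch(K)\nmid N_1N_2$ explicitly, which the paper's eigenvalue count needs but never states — for instance, over $\F_2$ with $G_1=G_2=\Z/2$ and $S_i=\{1\}$ one has $d_1d_2=4$ and only one eigenvalue sum, yet $A\neq 0=A^2$, so $\dim_K K[A]=2$.
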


\begin{proof}
The algebras $\calA_K(\Cay(G_1,S_1)) = K[A_1 \otimes I_2]$ and
$\calA_K(\Cay(G_2,S_2)) = K[I_1 \otimes A_2]$ are commuting subalgebras
of $M_{N_1 N_2}(K)$.  Their product $f(A_1 \otimes I_2) \cdot g(I_1
\otimes A_2) = f(A_1) \otimes g(A_2)$ generates a copy of
$\calA_K(G_1,S_1) \otimes_K \calA_K(G_2,S_2)$ inside $M_{N_1 N_2}(K)$.
The element $A = A_1 \otimes I_2 + I_1 \otimes A_2$ lies in this tensor
product algebra, so $\calA_K(\Cay(G,S)) = K[A]$ embeds in it via $\iota$.
The dimension bound reflects that the distinct eigenvalues
$\lambda_{\chi_1} + \lambda_{\chi_2}$ are a subset of the
$\dim_1 \cdot \dim_2$ pairwise sums, with strict inequality when sums
coincide.
\end{proof}

\begin{example}[Inequality in Proposition~\ref{prop:tensor}]
$\calA_\Q(C_4) \cong \Q^3$ (dimension $3$) and $\calA_\Q(C_6) \cong \Q^4$
(dimension $4$).  The tensor product has dimension $12$, yet the Cartesian
product $C_4 \,\square\, C_6$ has only $9$ distinct eigenvalues
$\{4,3,2,1,0,-1,-2,-3,-4\} \subset \Q$, so
$\calA_\Q(C_4 \,\square\, C_6) \cong \Q^9$ with dimension $9 < 12$.
\end{example}

\section{Elementary Abelian Groups}
\label{sec:elem}

An \emph{elementary abelian $p$-group} is a group isomorphic to
$(\Z/p)^k$ for a prime $p$.  Its character group is isomorphic to
$(\Z/p)^k$ via $\chi_a(b) = \omega_p^{a \cdot b}$ where $\omega_p$ is a
primitive $p$-th root of unity and $a \cdot b = \sum_i a_i b_i$.

For the \emph{standard} connection set $S = \{e_1, -e_1, \ldots, e_k, -e_k\}$
(standard basis vectors and their negatives), the Cayley graph
$\Cay((\Z/p)^k, S)$ is the $k$-dimensional Hamming graph over $\Z/p$.
The eigenvalue at character $\chi_a$ is
\begin{equation}
\label{eq:elem-eig}
  \lambda_a = \sum_{i=1}^k (\omega_p^{a_i} + \omega_p^{-a_i})
  = \sum_{i=1}^k 2\cos(2\pi a_i/p),
\end{equation}
so $\lambda_a = \lambda_{a'}$ whenever $\{|a_1|_p, \ldots, |a_k|_p\}
= \{|a_1'|_p, \ldots, |a_k'|_p\}$ as multisets, where $|a_i|_p =
\min(a_i, p - a_i)$.

\begin{theorem}[Elementary abelian groups: spectrum classification]
\label{thm:elem}
Let $G = (\Z/p)^k$ and $S$ the standard connection set above.
\begin{enumerate}[label=\textup{(\roman*)}]
  \item For $p = 2$: $\omega_2 = -1$, all eigenvalues $\lambda_a =
        k - 2\wt(a) \in \Z$, and
        $\calA_\Q(\Cay(G,S)) \cong \Q^{k+1}$.

  \item For $p = 3$: $\omega_3 + \omega_3^{-1} = -1$, so each term in
        ~\eqref{eq:elem-eig} is $-1$ (for $a_i \ne 0$) or $2$ (for
        $a_i = 0$).  All eigenvalues are integers in $\{-k, -k+3, \ldots, 2k\}$
        and $\calA_\Q(\Cay(G,S)) \cong \Q^{k+1}$.

  \item For $p = 5$: $\omega_5 + \omega_5^{-1} = (\sqrt{5}-1)/2$, so
        each term $2\cos(2\pi a_i/5)$ lies in $\Q(\sqrt{5})$.  Eigenvalue
        sums $\sum_i 2\cos(2\pi a_i/5)$ therefore lie in $\Q(\sqrt{5})$,
        and the Galois orbits pair conjugate sums.  Singleton orbits are
        those sums invariant under $\sqrt{5} \mapsto -\sqrt{5}$.  The
        quadratic minimal polynomials depend on the specific eigenvalue
        (see Example~\ref{ex:elem-p5}); they are not all of a single form.

  \item For $p \ge 7$: eigenvalues lie in $\Q(\zeta_p)^+$, which has
        degree $(p-1)/2 \ge 3$ over $\Q$; the spectral algebra generally
        contains factors of degree $> 2$.
\end{enumerate}
\end{theorem}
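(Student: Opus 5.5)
The plan is to reduce everything to Theorem~\ref{thm:main} and Corollary~\ref{cor:dim}. For $K=\Q$ the algebra is a product of number fields, one per Galois orbit of eigenvalues, so each part amounts to computing the set of values of $\lambda_a$ in \eqref{eq:elem-eig} and locating them in a field. The eigenvalue formula itself is Theorem~\ref{thm:dft}, applied with the characters $\chi_a(b)=\omega_p^{a\cdot b}$. The standard connection set then gives $\lambda_a=\sum_i(\omega_p^{a_i}+\omega_p^{-a_i})$ directly. (For $p=2$ one has $e_i=-e_i$, so $S=\{e_1,\dots,e_k\}$ and each coordinate contributes $\omega_2^{a_i}=\pm1$ only once.)

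For (i), each coordinate contributes $+1$ if $a_i=0$ and $-1$ if $a_i=1$, so $\lambda_a=k-2\wt(a)$. As $\wt(a)$ ranges over $0,\dots,k$ we get exactly $k+1$ distinct integers. Every orbit is then a singleton with $\Psi=x-\lambda$, and Theorem~\ref{thm:main}(ii) gives $\Q^{k+1}$.

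For (ii), $\omega_3+\omega_3^{-1}=-1$, so $\lambda_a=2(k-w)-w=2k-3w$ with $w=\wt(a)\in\{0,\dots,k\}$. These are $k+1$ distinct integers from $2k$ down to $-k$ in steps of $3$, and the same conclusion $\Q^{k+1}$ follows.

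For (iii), $2\cos(2\pi a_i/5)$ equals $2$, $(\sqrt5-1)/2$ or $-(\sqrt5+1)/2$ according as $|a_i|_5=0,1,2$. Hence $\lambda_a\in\Q(\sqrt5)$, and Corollary~\ref{cor:cyclo} confirms it is real. The nontrivial automorphism $\sqrt5\mapsto-\sqrt5$ equals $\sigma_2$ restricted to $\Q(\zeta_5)^+$. By Lemma~\ref{lem:galois-action} it maps $\lambda_a$ to $\lambda_{2a}$, which swaps the counts $n_1,n_2$ of coordinates with $|a_i|_5=1,2$. So the orbit of $\lambda_a$ is a singleton exactly when the $\sqrt5$-coefficient $(n_1-n_2)/2$ vanishes, i.e.\ $n_1=n_2$. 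Otherwise the orbit has size two, with minimal polynomial $x^2-\mathrm{tr}(\lambda)x+\mathrm{N}(\lambda)$. Here $\lambda=2n_0-(n_1+n_2)/2+\tfrac{\sqrt5}{2}(n_1-n_2)$, so the trace and norm vary with $(n_0,n_1,n_2)$, which justifies the remark that the quadratic polynomials are not all of one form. Example~\ref{ex:elem-p5} can be cited as an instance.

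For (iv), all $\lambda_a$ lie in $\Q(\zeta_p)^+$ by Corollary~\ref{cor:cyclo}, and that field has degree $(p-1)/2\ge3$. To support ``generally'', I would exhibit one orbit of degree $(p-1)/2$: take $a=e_1$, so $\lambda_a=2k-2+2\cos(2\pi/p)$. This generates $\Q(\zeta_p)^+$, so Theorem~\ref{thm:main}(ii) contains a factor of degree $(p-1)/2>2$.

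The main obstacle is the exact count of orbits in (iii), which depends on distinctness of the values $\lambda$ across different triples $(n_0,n_1,n_2)$. I would avoid it by stating (iii) only qualitatively, as the theorem does. If a count were needed, I would use that $1$ and $\sqrt5$ are $\Q$-linearly independent and that $n_0+n_1+n_2=k$, which reduces distinctness to injectivity of $(n_1,n_2)\mapsto(2k-\tfrac52(n_1+n_2),\,n_1-n_2)$; this map is clearly injective.
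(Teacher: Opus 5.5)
Your proposal is correct and follows essentially the paper's route: compute $\lambda_a$ coordinatewise from \eqref{eq:elem-eig}, locate the values in $\Q$, $\Q(\sqrt5)$ or $\Q(\zeta_p)^+$, and read off the orbits via Theorem~\ref{thm:main}, while being more explicit than the paper in (iii) (the $n_1=n_2$ criterion via $\sigma_2$) and in (iv) (the witness $a=e_1$, which gives a factor of degree $(p-1)/2$). One small citation point: Corollary~\ref{cor:cyclo} with $N=p^k$ only gives containment in $\Q(\zeta_{p^k})^+$, so containment in $\Q(\zeta_p)^+$ should instead be read directly off \eqref{eq:elem-eig}.
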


\begin{proof}
(i) is the Hamming cube (Theorem~\ref{thm:hamming} below).  For~(ii):
$2\cos(2\pi/3) = -1$, so each term in~\eqref{eq:elem-eig} equals $-1$
if $a_i \ne 0$ and $2$ if $a_i = 0$.  The eigenvalue depends only on
$|\{i : a_i \ne 0\}| = \wt(a)$, giving $\lambda_a = 2(k-\wt(a)) - \wt(a)
= 2k - 3\wt(a) \in \Z$.  The $k+1$ distinct values are rational, so all
Galois orbits are singletons and the spectral algebra is $\Q^{k+1}$.
For~(iii): $2\cos(2\pi/5) = (\sqrt{5}-1)/2 \in \Q(\sqrt{5})$.
The distinct eigenvalues are $\Q(\sqrt{5})$-valued, and the Galois group
$\Gal(\Q(\sqrt{5})/\Q) = \{1, \sigma\}$ acts by $\sigma(\sqrt{5}) = -\sqrt{5}$.
The orbits are pairs $\{\lambda_a, \sigma(\lambda_a)\}$ unless
$\lambda_a \in \Q$.  For~(iv): $[\Q(\zeta_p)^+:\Q] = (p-1)/2 \ge 3$,
and the individual summands $2\cos(2\pi a_i/p)$ are conjugate under the
degree-$(p-1)/2$ extension, so the eigenvalue sums generally require
extensions of high degree.
\end{proof}

\begin{example}[$p = 5$, $k = 2$]
\label{ex:elem-p5}
$G = (\Z/5)^2$, $S = \{e_1, -e_1, e_2, -e_2\}$.  The eigenvalues
$\lambda_{(a_1,a_2)} = 2\cos(2\pi a_1/5) + 2\cos(2\pi a_2/5)$ for
$a_i \in \{0,1,2\}$ (since $|\cdot|_5$ identifies $1 \leftrightarrow 4$,
$2 \leftrightarrow 3$).  Setting $\alpha = 2\cos(2\pi/5) = (\sqrt{5}-1)/2$
and $\beta = 2\cos(4\pi/5) = -(\sqrt{5}+1)/2$:
\begin{center}
\renewcommand{\arraystretch}{1.15}
\begin{tabular}{cc|c}
\toprule
$|a_1|_5$ & $|a_2|_5$ & $\lambda$ \\
\midrule
$0$ & $0$ & $4$ \\
$0$ & $1$ & $2+\alpha$ \\
$0$ & $2$ & $2+\beta$ \\
$1$ & $1$ & $2\alpha$ \\
$1$ & $2$ & $\alpha+\beta = -1$ \\
$2$ & $2$ & $2\beta$ \\
\bottomrule
\end{tabular}
\end{center}
Since $\sigma(\alpha) = \beta$ and $\sigma(\beta) = \alpha$:
the rational eigenvalues $\{4, -1\}$ form singleton orbits, and
$\{2+\alpha, 2+\beta\}$, $\{2\alpha, 2\beta\}$ are conjugate pairs.
Hence $\calA_\Q(\Cay((\Z/5)^2,S)) \cong \Q^2 \times \Q(\sqrt{5})^2$,
with dimension $6$ and $2^4 = 16$ idempotents.
\end{example}

\section{Bose--Mesner Algebras and the Cayley Scheme}
\label{sec:bose}

An \emph{association scheme} on a finite set $X$ is a partition of
$X \times X$ into classes $R_0, R_1, \ldots, R_d$ satisfying the standard
axioms: $R_0$ is the diagonal, each $R_i$ is symmetric, and the structure
constants $p_{ij}^k = |R_k \cap (R_i \times R_j)|$ are well-defined.
The \emph{Bose--Mesner algebra} $\mathcal{M}$ is the commutative
$K$-algebra spanned by the adjacency matrices $A_0, \ldots, A_d$, with
$A_k$ being the $(0,1)$-matrix of relation $R_k$.

For a transitive action of a group on itself (by translation), any Cayley
graph $\Cay(G,S)$ on a finite abelian group $G$ arises from a
\emph{Cayley association scheme}: the classes are $R_s = \{(g,g+s) : g
\in G\}$ for $s \in G$, and the Bose--Mesner algebra $\mathcal{M}$ is
the full commutative algebra $K[P_g : g \in G]$ generated by all
translation matrices $P_g$ (where $(P_g)_{h,h'} = \mathbf{1}[h' = h+g]$).

\begin{proposition}[Spectral algebra as a Bose--Mesner subalgebra]
\label{prop:bose}
The spectral algebra $\calA_K(\Cay(G,S))$ is a subalgebra of the
Bose--Mesner algebra $\mathcal{M}$ of the Cayley association scheme on $G$.
Specifically, $A = \sum_{s \in S} P_s \in \mathcal{M}$, so
$\calA_K(\Cay(G,S)) = K[A] \subseteq \mathcal{M}$.

In general $\calA_K(\Cay(G,S)) \subsetneq \mathcal{M}$: the complete Cayley
graph $\Cay(G, G\setminus\{0\})$ has only two distinct eigenvalues $N-1$
and $-1$, giving $\calA_K(\Cay(G,G\setminus\{0\})) \cong K^2$, which has
dimension $2 \ll N = \dim_K \mathcal{M}$.  To generate all of $\mathcal{M}$
one needs all $N$ translation matrices $\{P_g : g \in G\}$, not just the
single adjacency-matrix sum $\sum_{s \in S} P_s$.
\end{proposition}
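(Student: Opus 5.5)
The plan is to prove the two assertions separately: first the containment $K[A]\subseteq\mathcal{M}$, then the strictness claimed for the complete Cayley graph.

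\emph{Containment.} I will show the matrix identity $A=\sum_{s\in S}P_s$ entrywise. By definition $(P_s)_{h,h'}=\mathbf{1}[h'=h+s]$, so $\bigl(\sum_{s\in S}P_s\bigr)_{h,h'}$ counts the $s\in S$ with $s=h'-h$. This count is $\mathbf{1}[h'-h\in S]$, which is exactly $A_{h,h'}$ as defined in Section~\ref{sec:cayley}. Next I check that $\mathcal{M}=K[P_g : g\in G]$ is a commutative unital $K$-algebra, closed under multiplication. The relations $P_gP_{g'}=P_{g+g'}$ and $P_0=I$ give this, so $\mathcal{M}$ is simply the $K$-span of the $P_g$. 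It follows that $\mathcal{M}$ contains $A$ and every polynomial $f(A)$, hence $K[A]\subseteq\mathcal{M}$.

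\emph{Dimension of $\mathcal{M}$.} I will show $\dim_K\mathcal{M}=N$. The matrices $P_g$ are linearly independent, because their supports $\{(h,h+g)\}$ are pairwise disjoint and nonempty. Combined with $\mathcal{M}$ being their span, this gives dimension exactly $N$.

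\emph{Strictness.} For $S=G\setminus\{0\}$ I will write $A=J-I$, where $J$ is the all-ones matrix. Then $J^2=NJ$, and therefore $(A-(N-1)I)(A+I)=(J-NI)J=0$. The minimal polynomial thus divides $(x-(N-1))(x+1)$. These two roots are distinct in $K$ because $\ch(K)\nmid N$. The degree is exactly $2$ once $N\ge 2$, since $A$ is not scalar. Alternatively, the eigenvalues can be read off from Theorem~\ref{thm:dft}: $\lambda_{\chi_0}=N-1$, and for $\chi\ne\chi_0$ orthogonality gives $\lambda_\chi=\sum_{g}\chi(g)-1=-1$. By Theorem~\ref{thm:main} (or Proposition~\ref{prop:quotient} together with CRT), $K[A]\cong K^2$. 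Since $2<N$ whenever $N\ge3$, the inclusion is proper.

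\emph{Main obstacle.} The only real subtlety is the hypothesis. One must record that $N\ge3$ is needed for a proper inclusion: for $N=2$, $K[A]$ equals $\mathcal{M}$, so the ``in general'' claim should be read as holding for $N\ge 3$. The remaining checks are routine.
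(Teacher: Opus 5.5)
Your proposal is correct and takes the same route as the paper. First, $A=\sum_{s\in S}P_s\in\mathcal{M}$ gives $K[A]\subseteq\mathcal{M}$. Second, the complete Cayley graph's minimal polynomial $(x-(N-1))(x+1)$ gives dimension $2<N$ for $N\ge 3$.

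You also supply details the paper leaves implicit: the entrywise check of $A=\sum_s P_s$, and $\dim_K\mathcal{M}=N$ via the disjoint supports of the $P_g$. You further note that $\ch(K)\nmid N$ is what makes the two roots distinct, so that $K[A]\cong K^2$ rather than $K[x]/((x+1)^2)$.
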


\begin{proof}
Each translation $P_s$ lies in $\mathcal{M}$ by the association scheme
axioms.  Their sum $A = \sum_{s \in S} P_s$ therefore lies in $\mathcal{M}$,
and $K[A] \subseteq \mathcal{M}$.  The complete Cayley graph has eigenvalues
$N-1$ (for $\chi_0$) and $-1$ (for all $\chi \ne \chi_0$), with minimal
polynomial $(x-(N-1))(x+1)$; the resulting spectral algebra has dimension
$2$, confirming $K[A] \ne \mathcal{M}$ for $N \ge 3$.
\end{proof}

\begin{remark}
The Bose--Mesner algebra of the Cayley scheme on $G$ is isomorphic to
the group algebra $K[G]$ via $P_g \leftrightarrow g$.  The character-orbit
decomposition of Theorem~\ref{thm:main} is therefore a specialisation of
the classical character-sum Wedderburn decomposition of group algebras:
$K[G] \cong \prod_{\calO \in \calO_K(G)} L_\calO$ where $L_\calO$ is the
field factor corresponding to the Galois orbit $\calO$ of characters.  The
spectral algebra $\calA_K(\Cay(G,S))$ is the image of $K[G]$ under the
map $g \mapsto A^{f(g)}$ where $f$ encodes the connection set.
\end{remark}

\section{Further Examples}
\label{sec:examples}

\subsection{The Hamming cube}

\begin{sloppypar}
The $k$-dimensional Hamming cube
$Q_k = \Cay((\Z/2)^k, \{e_1, \ldots, e_k\})$ has
characters $\chi_a(b) = (-1)^{a \cdot b}$ giving
eigenvalues $\lambda_a = k - 2\wt(a)$.
\end{sloppypar}

\begin{theorem}[Hamming cube spectral algebra]
\label{thm:hamming}
$\calA_\Q(Q_k) \cong \Q^{k+1}$ for every $k \ge 1$.
\end{theorem}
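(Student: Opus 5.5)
The plan is to apply Theorem~\ref{thm:main} with $G = (\Z/2)^k$, $N = 2^k$, $S = \{e_1,\ldots,e_k\}$ and $K = \Q$. Since $\ch(\Q) = 0 \nmid 2^k$, the hypotheses hold. The connection set is symmetric because $-e_i = e_i$ in $(\Z/2)^k$, and $0 \notin S$. Part~(ii) of Theorem~\ref{thm:main} then gives $\calA_\Q(Q_k) \cong \prod_{\calO} \Q[x]/(\Psi_{\calO,\Q})$. It remains to identify the orbits.

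The first step computes the eigenvalues. The characters are $\chi_a(b) = (-1)^{a\cdot b}$ for $a \in (\Z/2)^k$. Applying \eqref{eq:eigenvalue} gives
\[
\lambda_a = \sum_{i=1}^k (-1)^{a_i} = (k - \wt(a)) - \wt(a) = k - 2\wt(a).
\]
All of these are integers. Every Galois orbit over $\Q$ is therefore a singleton, and each $\Psi_{\calO,\Q}$ is linear, so each factor is $\Q[x]/(x-\lambda) \cong \Q$.

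The second step counts the distinct eigenvalues. As $a$ ranges over $(\Z/2)^k$, $\wt(a)$ takes every value in $\{0,1,\ldots,k\}$; for instance, $a = e_1+\cdots+e_w$ has weight $w$. The map $w \mapsto k-2w$ is injective, so there are exactly $k+1$ distinct eigenvalues $k, k-2, \ldots, -k$. This gives $r = k+1$ orbits, and the product is $\Q^{k+1}$. This agrees with the dimension count of Corollary~\ref{cor:dim}.

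There is no real obstacle here; the only point needing care is the count of distinct eigenvalues. That count requires both surjectivity of the weight onto $\{0,\ldots,k\}$ and injectivity of $w\mapsto k-2w$, and both are immediate. As a sanity check, one can alternatively note that $\prod_{w=0}^k (x-(k-2w))$ is the minimal polynomial by Theorem~\ref{thm:dft} (diagonalisable with these eigenvalues) and apply the Chinese Remainder Theorem directly.
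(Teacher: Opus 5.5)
Your proof is correct and follows the same route as the paper. You compute $\lambda_a = k - 2\wt(a)$, observe that all eigenvalues are rational so every orbit is a singleton with linear $\Psi_\calO$, and invoke Theorem~\ref{thm:main} to obtain $k+1$ copies of $\Q$; you are also slightly more careful than the paper in checking the hypotheses ($-e_i = e_i$, $\ch(\Q) \nmid 2^k$) and in noting that every weight $0, \ldots, k$ actually occurs, which the paper leaves implicit.
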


\begin{proof}
The $k+1$ values $k, k-2, k-4, \ldots, -k$ are distinct integers.
Since all eigenvalues are rational, each orbit is a singleton and
$\Psi_\calO = x - (k - 2j)$ for $j = 0, 1, \ldots, k$.
Theorem~\ref{thm:main} gives
$\calA_\Q(Q_k) \cong \prod_{j=0}^k \Q[x]/(x-(k-2j)) \cong \Q^{k+1}$.
\end{proof}

Figure~\ref{fig:Qk-eigenvalues} shows the eigenvalue positions and
multiplicities for $Q_k$, $k = 1, \ldots, 5$.

\begin{figure*}[ht]
\centering
\begin{tikzpicture}[scale=1.0]
\draw[->,thin] (-5.8,0) -- (6.0,0) node[right,font=\footnotesize] {eigenvalue};
\foreach \x in {-5,-4,-3,-2,-1,0,1,2,3,4,5}{
  \draw[thin] (\x,-0.08) -- (\x,0.08);
  \node[below,font=\tiny] at (\x,-0.10) {$\x$};
}
\pgfmathsetmacro\ky{1.0}
\fill[mynavy] (1,\ky) circle (0.12);
\fill[mynavy] (-1,\ky) circle (0.12);
\node[left,font=\scriptsize] at (-5.6,\ky) {$Q_1$};

\pgfmathsetmacro\ky{2.0}
\fill[mynavy] (2,\ky) circle (0.12);
\fill[mygold] (0,\ky) circle (0.18);
\fill[mynavy] (-2,\ky) circle (0.12);
\node[left,font=\scriptsize] at (-5.6,\ky) {$Q_2$};

\pgfmathsetmacro\ky{3.0}
\fill[mynavy] (3,\ky) circle (0.12);
\fill[mygold] (1,\ky) circle (0.22);
\fill[mygold] (-1,\ky) circle (0.22);
\fill[mynavy] (-3,\ky) circle (0.12);
\node[left,font=\scriptsize] at (-5.6,\ky) {$Q_3$};

\pgfmathsetmacro\ky{4.0}
\fill[mynavy] (4,\ky) circle (0.12);
\fill[mygold] (2,\ky) circle (0.22);
\fill[mycrimson] (0,\ky) circle (0.28);
\fill[mygold] (-2,\ky) circle (0.22);
\fill[mynavy] (-4,\ky) circle (0.12);
\node[left,font=\scriptsize] at (-5.6,\ky) {$Q_4$};

\pgfmathsetmacro\ky{5.0}
\fill[mynavy] (5,\ky) circle (0.12);
\fill[mygold] (3,\ky) circle (0.22);
\fill[mycrimson] (1,\ky) circle (0.30);
\fill[mycrimson] (-1,\ky) circle (0.30);
\fill[mygold] (-3,\ky) circle (0.22);
\fill[mynavy] (-5,\ky) circle (0.12);
\node[left,font=\scriptsize] at (-5.6,\ky) {$Q_5$};

\node[right,font=\footnotesize] at (3.5,0.80) {circle size $\propto$ multiplicity $\binom{k}{j}$};
\end{tikzpicture}
\caption{Eigenvalue positions of $Q_k$ for $k = 1, \ldots, 5$ on the
  integer number line.  The $k+1$ distinct eigenvalues $k, k-2, \ldots, -k$
  (all rational integers) are shown as dots; the radius is proportional to
  the multiplicity $\binom{k}{j}$ of eigenvalue $k-2j$.  All orbits are
  singletons, giving $\calA_\Q(Q_k) \cong \Q^{k+1}$.  The central
  eigenvalue (at $0$ for even $k$) carries the largest multiplicity.}
\label{fig:Qk-eigenvalues}
\end{figure*}

\subsection{The group $\Z/6 \times \Z/2$}

Let $G = \Z/6 \times \Z/2$ and $S = \{(1,0), (-1,0), (0,1)\}$.
The group has order $N = 12$ and $S$ generates $G$ (since $(1,0)$ generates
$\Z/6$ and $(0,1)$ generates $\Z/2$).  Characters are $\chi_{(a,b)}$ with
$\chi_{(a,b)}(g,h) = \zeta_6^{ag}(-1)^{bh}$.  The eigenvalue is
\[
  \lambda_{(a,b)}
  = \zeta_6^a + \zeta_6^{-a} + (-1)^b
  = 2\cos(\pi a/3) + (-1)^b.
\]

\begin{example}[Orbit analysis for $\Z/6 \times \Z/2$]
\label{ex:Z6Z2}
Computing $\lambda_{(a,b)}$ for $a = 0, \ldots, 5$ and $b = 0, 1$:
\begin{center}
\renewcommand{\arraystretch}{1.15}
\small
\begin{tabular}{cc|c|cc|c}
\toprule
$a$ & $b$ & $\lambda$ & $a$ & $b$ & $\lambda$ \\
\midrule
$0$ & $0$ & $3$  & $3$ & $0$ & $-1$ \\
$0$ & $1$ & $1$  & $3$ & $1$ & $-3$ \\
$1$ & $0$ & $2$  & $4$ & $0$ & $0$ \\
$1$ & $1$ & $0$  & $4$ & $1$ & $-2$ \\
$2$ & $0$ & $0$  & $5$ & $0$ & $2$ \\
$2$ & $1$ & $-2$ & $5$ & $1$ & $0$ \\
\bottomrule
\end{tabular}
\end{center}
The seven distinct values $\{-3, -2, -1, 0, 1, 2, 3\} \subset \Z$ are all
rational.  Every Galois orbit is a singleton, and
$\calA_\Q(\Cay(\Z/6 \times \Z/2, S)) \cong \Q^7$.
The dimension is $7$; the idempotent count is $2^7 = 128$.
Multiplicity of $0$: it occurs at $(a,b) \in \{(1,1),(2,0),(4,0),(5,1)\}$,
so eigenvalue $0$ has algebraic multiplicity $4$.
\end{example}

\subsection{The group $\Z/5 \times \Z/2$}

Let $G = \Z/5 \times \Z/2$ and $S = \{(1,0), (-1,0), (0,1)\}$.
Set $\alpha = 2\cos(2\pi/5) = (\sqrt{5}-1)/2$ and $\beta = 2\cos(4\pi/5)
= -(\sqrt{5}+1)/2$ (so $\alpha + \beta = -1$ and $\alpha\beta = -1$).
The eigenvalue $\lambda_{(a,b)} = 2\cos(2\pi a/5) + (-1)^b$.

\begin{example}[Orbit analysis for $\Z/5 \times \Z/2$]
\label{ex:Z5Z2}
The six distinct eigenvalues and their Galois orbits:
\begin{center}
\renewcommand{\arraystretch}{1.10}
\footnotesize
\begin{tabular}{cc|cc|c}
\toprule
Eigenvalue & Value & Conjugate & Value & $\Psi$ \\
\midrule
$\lambda_{(0,0)}$ & $3$ & (rational, singleton) & & $x-3$ \\
$\lambda_{(0,1)}$ & $1$ & (rational, singleton) & & $x-1$ \\
$\lambda_{(1,0)}$ & $\alpha+1$ & $\lambda_{(2,0)} = \beta+1$ & $(1-\sqrt{5})/2$ & $x^2-x-1$ \\
$\lambda_{(1,1)}$ & $\alpha-1$ & $\lambda_{(2,1)} = \beta-1$ & $-(3+\sqrt{5})/2$ & $x^2+3x+1$ \\
\bottomrule
\end{tabular}
\end{center}
(Note: $\lambda_{(3,b)} = \lambda_{(2,b)}$ and $\lambda_{(4,b)} = \lambda_{(1,b)}$
by the symmetry $2\cos(6\pi/5) = 2\cos(4\pi/5)$.)  Thus
\[
  \calA_\Q\!\bigl(\Cay(\Z/5 \times \Z/2, S)\bigr)
  \;\cong\; \Q^2 \times \Q(\sqrt{5})^2,
\]
with dimension $6$ and $2^4 = 16$ idempotents.  Both $\Q(\sqrt{5})$ factors
are isomorphic as abstract fields, yet they are distinct Wedderburn summands
corresponding to different Galois orbits of eigenvalues.
\end{example}

Figure~\ref{fig:orbit-grid} shows the orbit structure for the
$\Z/5 \times \Z/2$ example as a coloured grid.

\begin{figure*}[ht]
\centering
\begin{tikzpicture}[scale=1.0]
\newcommand{\drawgridcell}[5]{%
  \fill[#3!25] (#1,#2) rectangle (#1+1.20,#2+0.70);
  \draw[black,very thin] (#1,#2) rectangle (#1+1.20,#2+0.70);
}
\fill[mynavy!25] (0.00,0.80) rectangle (1.20,1.50);
\draw[black,very thin] (0.00,0.80) rectangle (1.20,1.50);
\node[font=\scriptsize,mynavy!80!black] at (0.60,1.24) {$a=0,b=1$};
\node[font=\tiny] at (0.60,1.00) {$\lambda=1$ (rat.)};

\fill[mynavy!25] (0.00,0.00) rectangle (1.20,0.70);
\draw[black,very thin] (0.00,0.00) rectangle (1.20,0.70);
\node[font=\scriptsize,mynavy!80!black] at (0.60,0.44) {$a=0,b=0$};
\node[font=\tiny] at (0.60,0.20) {$\lambda=3$ (rat.)};

\fill[mygold!25] (1.30,0.80) rectangle (2.50,1.50);
\draw[black,very thin] (1.30,0.80) rectangle (2.50,1.50);
\node[font=\scriptsize,mygold!80!black] at (1.90,1.24) {$a=1,b=1$};
\node[font=\tiny] at (1.90,1.00) {$\lambda=\alpha-1$ (\textsc{b})};

\fill[mygold!25] (1.30,0.00) rectangle (2.50,0.70);
\draw[black,very thin] (1.30,0.00) rectangle (2.50,0.70);
\node[font=\scriptsize,mygold!80!black] at (1.90,0.44) {$a=1,b=0$};
\node[font=\tiny] at (1.90,0.20) {$\lambda=\alpha+1$ (\textsc{a})};

\fill[mygold!50] (2.60,0.80) rectangle (3.80,1.50);
\draw[black,very thin] (2.60,0.80) rectangle (3.80,1.50);
\node[font=\scriptsize,mygold!80!black] at (3.20,1.24) {$a=2,b=1$};
\node[font=\tiny] at (3.20,1.00) {$\lambda=\beta-1$ (\textsc{b})};

\fill[mygold!50] (2.60,0.00) rectangle (3.80,0.70);
\draw[black,very thin] (2.60,0.00) rectangle (3.80,0.70);
\node[font=\scriptsize,mygold!80!black] at (3.20,0.44) {$a=2,b=0$};
\node[font=\tiny] at (3.20,0.20) {$\lambda=\beta+1$ (\textsc{a})};

\fill[mygold!50] (3.90,0.80) rectangle (5.10,1.50);
\draw[black,very thin] (3.90,0.80) rectangle (5.10,1.50);
\node[font=\scriptsize,mygold!80!black] at (4.50,1.24) {$a=3,b=1$};
\node[font=\tiny] at (4.50,1.00) {$\lambda=\beta-1$ (\textsc{b})};

\fill[mygold!50] (3.90,0.00) rectangle (5.10,0.70);
\draw[black,very thin] (3.90,0.00) rectangle (5.10,0.70);
\node[font=\scriptsize,mygold!80!black] at (4.50,0.44) {$a=3,b=0$};
\node[font=\tiny] at (4.50,0.20) {$\lambda=\beta+1$ (\textsc{a})};

\fill[mygold!25] (5.20,0.80) rectangle (6.40,1.50);
\draw[black,very thin] (5.20,0.80) rectangle (6.40,1.50);
\node[font=\scriptsize,mygold!80!black] at (5.80,1.24) {$a=4,b=1$};
\node[font=\tiny] at (5.80,1.00) {$\lambda=\alpha-1$ (\textsc{b})};

\fill[mygold!25] (5.20,0.00) rectangle (6.40,0.70);
\draw[black,very thin] (5.20,0.00) rectangle (6.40,0.70);
\node[font=\scriptsize,mygold!80!black] at (5.80,0.44) {$a=4,b=0$};
\node[font=\tiny] at (5.80,0.20) {$\lambda=\alpha+1$ (\textsc{a})};

\node[below,font=\footnotesize] at (0.60,-0.15) {$a=0$};
\node[below,font=\footnotesize] at (1.90,-0.15) {$a=1$};
\node[below,font=\footnotesize] at (3.20,-0.15) {$a=2$};
\node[below,font=\footnotesize] at (4.50,-0.15) {$a=3$};
\node[below,font=\footnotesize] at (5.80,-0.15) {$a=4$};
\node[left,font=\footnotesize] at (-0.10,1.15) {$b=1$};
\node[left,font=\footnotesize] at (-0.10,0.35) {$b=0$};

\draw[mycrimson,thick,<->] (1.90,1.60) to[bend left=25] node[above,font=\tiny,mycrimson] {$\sigma$} (3.20,1.60);
\draw[mycrimson,thick,<->] (1.90,0.78) to[bend right=25] node[below,font=\tiny,mycrimson] {$\sigma$} (3.20,0.78);
\draw[mycrimson,thick,<->] (5.80,1.60) to[bend right=25] node[above,font=\tiny,mycrimson] {$\sigma$} (3.20,1.60);

\fill[mynavy!25] (0,-1.00) rectangle (0.70,-0.60);
\draw[black,thin] (0,-1.00) rectangle (0.70,-0.60);
\node[right,font=\footnotesize] at (0.80,-0.80) {rational ($\Q$)};
\fill[mygold!25] (3.50,-1.00) rectangle (4.20,-0.60);
\draw[black,thin] (3.50,-1.00) rectangle (4.20,-0.60);
\node[right,font=\footnotesize] at (4.30,-0.80) {orbit A or B over $\Q(\sqrt{5})$};
\end{tikzpicture}
\caption{Galois orbit grid for $\Cay(\Z/5 \times \Z/2, S)$ with
  $S = \{(1,0),(-1,0),(0,1)\}$.  Each cell represents one character
  $\chi_{(a,b)}$ with the corresponding eigenvalue $\lambda_{(a,b)}$.
  Navy cells carry rational eigenvalues (singleton orbits); gold cells
  carry quadratic irrational eigenvalues in $\Q(\sqrt{5})$.
  Lighter gold (orbits at $a=1,4$) and darker gold ($a=2,3$) distinguish
  the two conjugate pairs.  The Galois automorphism $\sigma$ (swapping
  $\sqrt{5} \leftrightarrow -\sqrt{5}$) is shown as crimson arcs connecting
  conjugate cells.  The spectral algebra is
  $\calA_\Q \cong \Q^2 \times \Q(\sqrt{5})^2$.}
\label{fig:orbit-grid}
\end{figure*}

\subsection{Grid graphs}

The grid $C_n \,\square\, C_m = \Cay(\Z/n \times \Z/m, S_\square)$ with
$S_\square = \{(\pm 1,0),(0,\pm 1)\}$ has eigenvalues
$\lambda_{(a,b)} = 2\cos(2\pi a/n) + 2\cos(2\pi b/m)$ for $0 \le a < n$,
$0 \le b < m$.

\begin{example}[$C_4 \,\square\, C_6$]
$N = 24$.  The distinct eigenvalues from
$\{2\cos(\pi a/2) + 2\cos(\pi b/3) : 0 \le a \le 2,\, 0 \le b \le 3\}$
are $\{-4, -3, -2, -1, 0, 1, 2, 3, 4\}$, all rational integers.
$\calA_\Q(C_4 \,\square\, C_6) \cong \Q^9$; dim $= 9$, idempotents
$= 2^9 = 512$.
\end{example}

\begin{example}[$C_5 \,\square\, C_5$]
The standard connection set
$S = \{(1,0),(-1,0),(0,1),(0,-1)\}$ gives
$C_5 \,\square\, C_5$.  Eigenvalues are $2\cos(2\pi a/5) + 2\cos(2\pi b/5)$.
\begin{sloppypar}
Since $2\cos(2\pi a/5)$ takes three values over $a\in\Z/5$
(namely $2$, $\alpha = (\sqrt{5}-1)/2$, $\beta = -(\sqrt{5}+1)/2$,
with $a=1,4$ giving $\alpha$ and $a=2,3$ giving $\beta$),
the six distinct pairwise sums $\alpha_a + \alpha_b$:\end{sloppypar}
\[
4,\quad 2+\alpha,\quad 2+\beta,\quad 2\alpha,\quad
\alpha+\beta = -1,\quad 2\beta.
\]
(The values $-2+\alpha$, $-2+\beta$, $-4$ cannot occur since $-2 =
2\cos(\pi)$ requires an element of order $2$ in $\Z/5$, which does not
exist.)
Galois orbits: $\{4\}$, $\{-1\}$ (both rational), $\{2+\alpha, 2+\beta\}$,
$\{2\alpha, 2\beta\}$ (both quadratic conjugate pairs).
$\calA_\Q(C_5 \,\square\, C_5) \cong \Q^2 \times \Q(\sqrt{5})^2$;
dim $= 6$, idempotents $= 2^4 = 16$.
\end{example}

\section{Characteristic $p$}
\label{sec:charp}

\subsection{The semisimple case}

When $\ch(K) = p \nmid N$, the polynomial $x^N - 1$ is still separable,
so Theorem~\ref{thm:main} holds verbatim.  The Galois orbits are determined
by the Frobenius automorphism $x \mapsto x^q$ (for $K = \F_q$) acting on
the eigenvalues in $\overline{\F}_q$.

\begin{proposition}[Finite field case]
\label{prop:Fq}
Let $K = \F_q$, $q = p^f$, $p \nmid N$.  Then $\calA_{\F_q}(\Cay(G,S))$
is semisimple and decomposes as in Theorem~\ref{thm:main}, with orbits
under the Frobenius $\mathrm{Frob}_q : x \mapsto x^q$.  Each Wedderburn
factor is an extension $\F_{q^d}$ of $\F_q$ for some $d \ge 1$.
\end{proposition}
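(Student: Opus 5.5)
The proof will be a direct specialisation of Theorem~\ref{thm:main} to $K=\F_q$, plus the standard description of Galois theory over finite fields. Since $p\nmid N$, the hypothesis $\ch(K)\nmid N$ of Theorem~\ref{thm:dft} and Theorem~\ref{thm:main} holds. So $A$ is diagonalisable over $\overline{\F}_q$, $m_{G,S,\F_q}$ is squarefree, and
\[
  \calA_{\F_q}(\Cay(G,S)) \cong \prod_{\calO\in\calO_{\F_q}(G,S)} \F_q[x]/(\Psi_{\calO,\F_q}(x)).
\]
Semisimplicity is part~(iii) of that theorem. One point needs care first: when the characters are regarded as maps to $\overline{\F}_q^*$, the eigenvalues $\lambda_\chi=\sum_{s\in S}\chi(s)$ lie in $\F_q(\zeta_N)\subseteq\overline{\F}_q$, and they are the reductions of the complex character sums rather than the complex numbers themselves. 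Theorem~\ref{thm:dft} was already phrased over $\overline{K}$, so nothing further is needed.

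Second, I will identify the orbits. The group $\Gal(\overline{\F}_q/\F_q)$ is procyclic, topologically generated by $\mathrm{Frob}_q$. Every $\lambda_\chi$ lies in the finite extension $\F_q(\zeta_N)=\F_{q^m}$, where $m$ is the order of $q$ in $(\Z/N\Z)^*$. The action of $\Gal(\overline{\F}_q/\F_q)$ on $\F_{q^m}$ therefore factors through the finite cyclic group $\Gal(\F_{q^m}/\F_q)=\langle\mathrm{Frob}_q\rangle$. Hence the $\Gal(\overline{\F}_q/\F_q)$-orbits of Definition~\ref{def:orbits} coincide with the $\langle\mathrm{Frob}_q\rangle$-orbits $\{\lambda,\lambda^q,\lambda^{q^2},\dots\}$. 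As in Lemma~\ref{lem:galois-action}, the additivity of Frobenius in characteristic $p$ gives $\mathrm{Frob}_q(\lambda_\chi)=\sum_{s\in S}\chi(s)^q=\lambda_{\chi^q}$, so the orbits can also be read off from the action $\chi\mapsto\chi^q$ on $\Ghat$.

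Third, I will identify each factor. $\Psi_{\calO,\F_q}$ is irreducible over $\F_q$ of degree $d=|\calO|$, the length of the Frobenius orbit of any $\lambda\in\calO$. Hence $\F_q[x]/(\Psi_{\calO,\F_q})\cong\F_q(\lambda)$ is a field with $q^d$ elements. By uniqueness of finite fields of a given order, this factor is $\F_{q^d}$, and $d$ divides $m$.

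There is no serious obstacle. The only step needing care is the reduction from the profinite absolute Galois group to the Frobenius-generated finite cyclic group, and that is immediate once every eigenvalue is known to lie in $\F_{q^m}$.
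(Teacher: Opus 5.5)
Your proposal is correct and takes the paper's own route. The paper gives no formal proof: it only notes, just before the statement, that $x^N-1$ stays separable when $p\nmid N$, so Theorem~\ref{thm:main} applies verbatim, with the $\Gal(\overline{\F}_q/\F_q)$-orbits given by Frobenius. You supply the details it leaves implicit: every $\lambda_\chi$ lies in $\F_q(\zeta_N)=\F_{q^m}$, so the Galois action reduces to $\langle \mathrm{Frob}_q\rangle$, and each factor $\F_q[x]/(\Psi_{\calO,\F_q})$ is a field of order $q^{d}$ with $d=|\calO|$ dividing $m$, hence $\F_{q^d}$.
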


\begin{example}[$C_7$ over $\F_2$]
The minimal polynomial of $A_{C_7}$ over $\Q$ is
$m_{C_7,\Q}(x) = (x-2)(x^3+x^2-2x-1)$.  Reducing mod $2$:
$x(x^3+x^2+1)$.  Since $x^3+x^2+1$ is irreducible over $\F_2$
(no roots in $\{0,1\}$), $\calA_{\F_2}(C_7) \cong \F_2 \times \F_8$.
\end{example}

\begin{example}[$Q_3$ over $\F_3$]
$Q_3$ has eigenvalues $3, 1, -1, -3$.  Modulo $3$: $0, 1, -1, 0$.
The distinct values in $\F_3$ are $\{0, 1, -1\} = \{0, 1, 2\}$, so
the minimal polynomial over $\F_3$ is $x(x-1)(x+1) = x(x^2-1) = x^3-x$.
$\calA_{\F_3}(Q_3) \cong \F_3^3$.  The two distinct eigenvalues $3 \equiv 0$
and $-3 \equiv 0$ in $\F_3$ collapse to the same value, reducing the
dimension from $4$ to $3$.
\end{example}

\subsection{The non-semisimple case}

When $\ch(K) = p \mid N$, write $N = p^a M$ with $p \nmid M$.

\begin{proposition}[Characteristic dividing the group order]
\label{prop:charp}
Let $\ch(K) = p \mid N$, $N = p^a M$, $p \nmid M$.  Let $G'$ be the
prime-to-$p$ quotient of $G$ (of order $M$) and $\pi : G \to G'$ the
quotient map.  Define the weighted connection set $S'$ on $G'$ by
$w(s') = |\{s \in S : \pi(s) = s'\}|$ for $s' \in G'$.  Then
\[
  \calA_K(\Cay(G,S))\big/\Jac(\calA_K(\Cay(G,S)))
  \;\cong\;
  K[x]\big/(m_{G,S,K}^{\mathrm{red}}),
\]
where $m_{G,S,K}^{\mathrm{red}}$ is the squarefree part of $m_{G,S,K}$
and equals the minimal polynomial of the weighted Cayley matrix on $G'$.
\end{proposition}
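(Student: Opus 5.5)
The plan is to separate the statement into a purely ring-theoretic part and a group-algebra part. For the first part I argue exactly as in Proposition~\ref{prop:ss}. Proposition~\ref{prop:quotient} gives $\calA_K(\Cay(G,S)) \cong K[x]/(m_{G,S,K})$. In this quotient the Jacobson radical is $(\rad(m_{G,S,K}))/(m_{G,S,K})$, where $\rad(m)$ is the product of the distinct monic irreducible factors of $m$. Hence
\[
\calA_K(\Cay(G,S))/\Jac \cong K[x]/(m^{\mathrm{red}}_{G,S,K}), \qquad m^{\mathrm{red}}_{G,S,K} = \rad(m_{G,S,K}).
\]
What remains is to identify $\rad(m_{G,S,K})$ with the minimal polynomial $m'$ of the weighted Cayley matrix $A' = \sum_{s' \in G'} w(s') P'_{s'} \in M_M(K)$.

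For the second part I work in the group algebra. The regular representation embeds $K[G]$ faithfully into $M_N(K)$, and under this embedding $A$ is the image of $\alpha = \sum_{s\in S} s$. Likewise $A'$ is the image of $\alpha' = \sum_{s'} w(s')\, s' \in K[G']$ under the faithful embedding $K[G'] \hookrightarrow M_M(K)$. So for every $f \in K[x]$ we have $f(A) = 0$ iff $f(\alpha) = 0$, and $f(A') = 0$ iff $f(\alpha') = 0$.

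Let $P$ be the Sylow $p$-subgroup of $G$, so that $G' = G/P$. The map $\pi$ extends to a surjection $\tilde\pi : K[G] \to K[G']$ sending $\alpha \mapsto \alpha'$; this is exactly the definition of the weights $w$. Its kernel is $I_P\, K[G]$, where $I_P$ is the augmentation ideal of $K[P]$, spanned by the elements $g-1$ with $g \in P$. In characteristic $p$ we have $(g-1)^{p^a} = g^{p^a} - 1 = 0$. Since $K[G]$ is commutative, $I_P$ is finitely generated by nilpotents, so $\ker\tilde\pi$ is a nilpotent ideal: $(\ker\tilde\pi)^n = 0$ for some $n$.

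Now compare $m = m_{G,S,K}$ and $m'$.
\begin{enumerate}[label=\textup{(\alph*)}]
  \item From $m(\alpha) = 0$ we get $m(\alpha') = \tilde\pi(m(\alpha)) = 0$, so $m' \mid m$.
  \item From $m'(\alpha') = 0$ we get $m'(\alpha) \in \ker\tilde\pi$. Hence $m'(\alpha)^n = 0$, and so $m \mid (m')^n$.
\end{enumerate}
Together, (a) and (b) show that $m$ and $m'$ have the same irreducible factors. Finally, $m'$ is squarefree: since $p \nmid M$, the proof of Theorem~\ref{thm:dft} goes through verbatim for the weighted sum. It diagonalises $A'$ over $\overline{K}$ with eigenvalues $\sum_{s'} w(s')\chi(s')$, and the symmetry $S=-S$ is never used in that proof. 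Therefore $m' = \rad(m) = m^{\mathrm{red}}_{G,S,K}$.

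The step I expect to need the most care is the nilpotence of $\ker\tilde\pi$. One has to check that this kernel really is generated by the elements $(g-1)$ with $g \in P$; this is the standard kernel of $K[G] \to K[G/P]$. One also has to note that commutativity of $K[G]$ lets the nilpotence of the generators pass to the ideal they generate. The transfer between matrices and group-algebra elements is harmless only because both regular representations are faithful, and I will state that explicitly.
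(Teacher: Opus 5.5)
Your proof is correct, but it identifies $\rad(m_{G,S,K})$ with $m'$ by a different mechanism from the paper's.

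The paper works on the spectral side. Since $x^N-1=(x^M-1)^{p^a}$ in characteristic $p$, every $\overline{K}$-valued character of $G$ takes $M$-th roots of unity as values and so factors as $\chi'\circ\pi$. Each character sum $\lambda_\chi$ therefore equals the eigenvalue $\sum_{s'}w(s')\chi'(s')$ of the weighted matrix on $G'$, and the paper concludes by comparing root sets.

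You work on the algebra side. The surjection $\tilde\pi\colon K[G]\to K[G']$ has nilpotent kernel, which gives $m'\mid m\mid (m')^n$ by divisibility alone, and the DFT on $G'$ makes $m'$ squarefree.

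Your route never needs to know the spectrum of $A$ when $p\mid N$. The paper's root-set comparison tacitly uses that every eigenvalue of $A$ in $\overline{K}$ is some $\lambda_\chi$. Theorem~\ref{thm:dft} no longer provides this here: there are only $M<N$ characters $G\to\overline{K}^*$, so their eigenvectors cannot span $\overline{K}^N$. Your divisibility $m\mid(m')^n$ supplies exactly that inclusion. It also bounds the multiplicity of each irreducible factor of $m_{G,S,K}$ by the nilpotency index of $\ker\tilde\pi$.

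In exchange, the paper's argument is shorter and exhibits the eigenvalues explicitly as character sums on $G'$.
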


\begin{proof}
In $\overline{K}[x]$, $x^N - 1 = (x^M - 1)^{p^a}$ by the Frobenius
endomorphism, so every $N$-th root of unity in $\overline{K}$ is an $M$-th
root of unity.  Each character $\chi \in \Ghat$ factors as $\chi = \chi'
\circ \pi$ for a unique $\chi' \in \widehat{G'}$, and the eigenvalue
$\lambda_\chi = \sum_{s \in S} \chi(s) = \sum_{s' \in G'} w(s') \chi'(s')$
equals the eigenvalue of the weighted matrix on $G'$.  Hence
$m_{G,S,K}^{\mathrm{red}} = m_{G',S',K}$, and the Jacobson radical
identity follows from Proposition~\ref{prop:ss}.
\end{proof}

\section{Coding Theory Applications}
\label{sec:coding}

\subsection{Cyclic codes: the cycle-graph case}

\begin{remark}[Scope of the coding-theory connection]
\label{rem:coding-scope}
A cyclic code of length $N$ over $\F_q$ is an ideal in $\F_q[x]/(x^N-1)$.
The spectral algebra $\calA_{\F_q}(\Cay(G,S))$ is \emph{not} in general a
quotient of $\F_q[x]/(x^N-1)$: the minimal polynomial $m_{G,S,\F_q}$ of
$A$ need not divide $x^N-1$.  For the cycle graph $\Cay(\Z/N,\{1,-1\})$
the eigenvalues $2\cos(2\pi k/N)$ are not $N$-th roots of unity, so
$\Psi_{\calO,\F_q}$ is not in general a factor of $x^N-1$.

The connection to cyclic codes is indirect and works cleanly only in the
special case where the eigenvalues of $A$ happen to lie among the $N$-th
roots of unity in $\overline{\F}_q$.  For the cycle graph over $\F_q$ with
$\ch(\F_q) \nmid N$, the eigenvalues $\lambda_k = \zeta_N^k + \zeta_N^{-k}$
are character sums of $N$-th roots of unity, and the factorisation of
$x^N-1$ over $\F_q$ governs the cyclic-code structure.
The BCH bound for cyclic codes over finite fields is standard~\cite{HP03}.
\end{remark}

\begin{example}[$C_7$ over $\F_2$ and the Hamming code]
The cycle $C_7$ over $\F_2$: the minimal polynomial $m_{C_7,\F_2}(x)
= x(x^3+x^2+1)$.  The polynomial $x^3+x^2+1$ is a factor of $x^7-1$
over $\F_2$ ($x^7-1 = (x+1)(x^3+x^2+1)(x^3+x+1)$ over $\F_2$), so the
spectral algebra embeds in $\F_2[x]/(x^7-1)$.  This embedding connects the
orbit $\calO$ with minimal polynomial $x^3+x^2+1$ to a subspace of the
cyclic code ring.  Note: the standard $[7,4,3]$ Hamming code has generator
$x^3+x+1$ (the other cubic factor), not $x^3+x^2+1$.  The relationship
between the spectral orbit $\calO_7$ and the Hamming code is an analogy
suggested by this common-factor structure; a precise duality-map argument
is outside the scope of this paper.
\end{example}

\section{The Companion Paper Connection}
\label{sec:companion}

The companion paper~\cite{MKB26} defines, for squarefree odd $n$, the
homomorphism quotient $\varrho(n) = \varphi(n)/2^{\omega(n)}$ and shows it
equals $\prod_{p \mid n}(p-1)/2$.  In the spectral algebra language
of the cyclic case (Section~\ref{sec:cyclic}), this is the product of
field degrees of the prime-indexed Wedderburn summands.

For the cyclic case $G = \Z/n$ and $S = \{1,-1\}$, the companion
paper~\cite{MKB26} gives $\varrho(n) = \varphi(n)/2^{\omega(n)}
= \prod_{p \mid n}(p-1)/2$, which is the product of field degrees of the
prime-indexed Wedderburn summands of $\calA_\Q(C_n)$ (Section~\ref{sec:cyclic}).

\begin{remark}[Why no product formula for general $G$]
One might hope to extend $\varrho$ to general abelian $G$ by setting
$\varrho(G,S) = \prod_\calO [L_\calO:\Q]$.  This product does not equal
$\dim_\Q \calA_\Q(\Cay(G,S)) = \sum_\calO [L_\calO:\Q]$ in general.
For example, $\calA_\Q(C_5) \cong \Q \times \Q(\sqrt{5})$ has
$\sum [L_\calO:\Q] = 1+2 = 3$ while $\prod [L_\calO:\Q] = 1 \cdot 2 = 2$.
A natural generalisation of $\varrho$ from the companion paper to
non-cyclic groups does not follow directly from the Wedderburn decomposition
and is left as an open problem.
\end{remark}

\section{Galois Module Structure of Summands}
\label{sec:galois-structure}

For each orbit $\calO \in \calO_\Q(\Cay(G,S))$, the Wedderburn summand
$L_\calO = \Q(\lambda_\calO)$ is an abelian number field, and its Galois
group over $\Q$ is computable from the stabiliser of $\lambda_\calO$ in
$(\Z/N\Z)^*/\{\pm 1\}$.

\begin{proposition}[Galois group of a summand]
\label{prop:galois-group}
For $K = \Q$, the extension $L_\calO/\Q$ is Galois with group
\[
  \Gal(L_\calO/\Q) \;\cong\;
  (\Z/N\Z)^* \Big/ \mathrm{Stab}_{(\Z/N\Z)^*}(\lambda_\calO),
\]
where $\mathrm{Stab}(\lambda_\calO) = \{a \in (\Z/N\Z)^* :
\lambda_{\chi^a} = \lambda_\chi\}$.  Since $\lambda_\chi = \lambda_{\chi^{-1}}$
for all $\chi$ (the eigenvalues are real), $-1 \in \mathrm{Stab}(\lambda_\calO)$
always holds, so $\langle -1 \rangle \subseteq \mathrm{Stab}(\lambda_\calO)$,
and equivalently
\[
  \Gal(L_\calO/\Q) \;\cong\;
  \bigl[(\Z/N\Z)^*/\langle -1 \rangle\bigr]
  \Big/ \bigl[\mathrm{Stab}(\lambda_\calO)/\langle -1\rangle\bigr].
\]
\end{proposition}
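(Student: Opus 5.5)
Fix a representative $\lambda = \lambda_\chi \in \calO$ and set $L_\calO = \Q(\lambda)$. The plan is to realise $L_\calO$ as a subfield of the abelian field $\Q(\zeta_N)$ and then read off its Galois group from the fixing subgroup.

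\textbf{Step 1: $L_\calO/\Q$ is Galois and abelian.} By Corollary~\ref{cor:cyclo}, $L_\calO \subseteq \Q(\zeta_N)^+ \subseteq \Q(\zeta_N)$. The group $\Gal(\Q(\zeta_N)/\Q) \cong (\Z/N\Z)^*$ is abelian, so every subgroup is normal. By the Galois correspondence, every intermediate field is therefore Galois over $\Q$, with Galois group the quotient of $(\Z/N\Z)^*$ by the subgroup fixing it. In particular this applies to $L_\calO$.

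\textbf{Step 2: identify the fixing subgroup with the stabiliser.} Let $H \le (\Z/N\Z)^*$ be the subgroup fixing $L_\calO$ pointwise. Since $L_\calO$ is generated over $\Q$ by $\lambda$, an automorphism $\sigma_a$ fixes $L_\calO$ if and only if $\sigma_a(\lambda) = \lambda$. By Lemma~\ref{lem:galois-action}, $\sigma_a(\lambda_\chi) = \lambda_{\chi^a}$. Hence
\[
H = \{a : \lambda_{\chi^a} = \lambda_\chi\} = \mathrm{Stab}(\lambda_\calO),
\]
and this gives the first isomorphism $\Gal(L_\calO/\Q) \cong (\Z/N\Z)^*/\mathrm{Stab}(\lambda_\calO)$.

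\textbf{Step 3: the stabiliser is independent of the representative.} The stabiliser is defined via a chosen representative $\chi$, so we must check it does not depend on that choice. Any other element of $\calO$ has the form $\lambda_{\chi^b} = \sigma_b(\lambda)$. Its stabiliser is the conjugate $\sigma_b H \sigma_b^{-1}$, which equals $H$ because the group is abelian. Note also that different characters $\chi$ may give the same eigenvalue; this is harmless, because the stabiliser depends only on the value $\lambda$ via $\sigma_a$.

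\textbf{Step 4: the quotient by $\langle -1\rangle$.} Lemma~\ref{lem:char-sum}(i) gives $\lambda_{\chi^{-1}} = \lambda_\chi$, so $-1 \in H$ and $\langle -1\rangle \subseteq H$. The third isomorphism theorem then gives the second displayed form,
\[
(\Z/N\Z)^*/H \;\cong\; \bigl[(\Z/N\Z)^*/\langle -1\rangle\bigr] \big/ \bigl[H/\langle -1\rangle\bigr].
\]

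The only real subtlety is Step 2: the stabiliser must be expressed through the Galois action $\sigma_a$ rather than the equality $\lambda_{\chi^a} = \lambda_\chi$ alone. Lemma~\ref{lem:galois-action} settles this, since it identifies $\sigma_a(\lambda_\chi)$ with $\lambda_{\chi^a}$ exactly.
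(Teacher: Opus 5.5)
Your proof is correct, but it gets to the result by a different mechanism from the paper's.

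\textbf{The paper's route.} The paper uses the orbit--stabiliser theorem. Since $(\Z/N\Z)^*$ acts on the eigenvalue set through $\sigma_a(\lambda_\chi)=\lambda_{\chi^a}$, the stabiliser has index $|\calO|$. The paper then asserts that the residual action of $(\Z/N\Z)^*/\mathrm{Stab}(\lambda_\calO)$ on $\calO$ is faithful, and identifies it with $\Gal(L_\calO/\Q)$.

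\textbf{Your route.} You work inside the abelian extension $\Q(\zeta_N)/\Q$ and apply the Galois correspondence directly. $L_\calO=\Q(\lambda)$ is Galois because every subgroup of an abelian group is normal. Its Galois group is the quotient of $(\Z/N\Z)^*$ by the subgroup fixing $L_\calO$, realised by restriction. That fixing subgroup equals the stabiliser because $\lambda$ generates $L_\calO$.

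\textbf{What your route buys.} It settles points the paper leaves implicit.
\begin{enumerate}
\item It actually proves that $L_\calO/\Q$ is Galois. The paper's step from ``faithful action on $\calO$'' to ``$\cong\Gal(L_\calO/\Q)$'' tacitly needs $\calO\subseteq L_\calO$, i.e.\ normality.
\item It gives the isomorphism concretely, as restriction of automorphisms.
\item Your Step~3 checks that the stabiliser does not depend on the chosen eigenvalue representative or on the character realising it. The paper's definition silently relies on this.
\end{enumerate}

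\textbf{What the paper's route buys.} The orbit-counting viewpoint shows the identity $[(\Z/N\Z)^*:\mathrm{Stab}(\lambda_\calO)]=|\calO|=\deg\Psi_{\calO,\Q}$ immediately. In your argument this appears only as the consequence $|\Gal(L_\calO/\Q)|=[L_\calO:\Q]$.
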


\begin{proof}
The group $(\Z/N\Z)^*$ acts on the set $\{\lambda_\chi : \chi \in \Ghat\}$
by $\sigma_a(\lambda_\chi) = \lambda_{\chi^a}$ (Lemma~\ref{lem:galois-action}).
The stabiliser of $\lambda_\calO$ in $(\Z/N\Z)^*$ has index $|\calO|$ by the
orbit-stabiliser theorem.  Since $\lambda_\calO$ generates $L_\calO$ over
$\Q$, the residual action of $(\Z/N\Z)^*/\mathrm{Stab}(\lambda_\calO)$ on
$\calO$ gives a faithful action, hence the isomorphism with $\Gal(L_\calO/\Q)$.
The inclusion $\langle -1 \rangle \subseteq \mathrm{Stab}(\lambda_\calO)$
follows from $\lambda_\chi = \lambda_{\chi^{-1}}$.
\end{proof}

\begin{example}[Galois groups for $C_n$]
For the orbit $\calO_d$ of $C_n$ (indexed by $d \mid n$):
\[
\Gal(\Q(\zeta_d)^+/\Q) \cong (\Z/d\Z)^*/\langle -1 \rangle.
\]
For $d$ prime: this group is cyclic of order $(d-1)/2$.
\begin{sloppypar}
For $d = 12$: $(\Z/12\Z)^* = \{1,5,7,11\} \cong \Z/2 \times \Z/2$,
so $\langle -1 \rangle = \langle 11 \rangle = \{1,11\}$
(since $-1 \equiv 11 \pmod{12}$), and the quotient
$(\Z/12\Z)^*/\{1,11\} \cong \{1,5,7,11\}/\{1,11\} \cong \{1,5\} \cong \Z/2$.
This matches $[\Q(\sqrt{3}):\Q] = 2$.
\end{sloppypar}
\end{example}

\begin{corollary}[Abelianness of Galois groups]
For any abelian Cayley graph $\Cay(G,S)$ over $\Q$, every Wedderburn
summand $L_\calO$ is an \emph{abelian} extension of $\Q$, and
real abelian extension).  By the Kronecker--Weber theorem, every $L_\calO$
is contained in a cyclotomic field; Corollary~\ref{cor:cyclo} gives the
explicit cyclotomic field $\Q(\zeta_N)^+ \supseteq L_\calO$.
\end{corollary}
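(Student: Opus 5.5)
The claim to prove is that each Wedderburn summand $L_\calO = \Q(\lambda_\calO)$ is an abelian (indeed real abelian) extension of $\Q$, contained in $\Q(\zeta_N)^+$. The plan is to show first that $L_\calO$ is a subfield of a field with abelian Galois group, and then to use the Galois correspondence.

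\emph{Step 1: containment.} By Corollary~\ref{cor:cyclo}, every eigenvalue $\lambda_\chi$ lies in $\Q(\zeta_N)^+ = \Q(\zeta_N)\cap\R$. Hence for every orbit $\calO$ we have $L_\calO = \Q(\lambda_\calO) \subseteq \Q(\zeta_N)^+ \subseteq \Q(\zeta_N)$. By Theorem~\ref{thm:main}(ii),(iv), the summand $\Q[x]/(\Psi_{\calO,\Q})$ is identified with $\Q(\lambda_\calO)$ via $\bar x \mapsto \lambda_\calO$, so this containment is really a statement about the abstract summand.

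\emph{Step 2: normality and abelianness.} The group $\Gal(\Q(\zeta_N)/\Q) \cong (\Z/N\Z)^*$ is abelian. In an abelian Galois group every subgroup is normal, so by the fundamental theorem of Galois theory every intermediate field $\Q \subseteq L \subseteq \Q(\zeta_N)$ is Galois over $\Q$. Its Galois group is a quotient of $(\Z/N\Z)^*$ and is therefore abelian. Applying this to $L = L_\calO$ shows that $L_\calO/\Q$ is abelian.

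This step also recovers and justifies Proposition~\ref{prop:galois-group}. The fixed group of $L_\calO$ is exactly $\mathrm{Stab}(\lambda_\calO)$ under the action of Lemma~\ref{lem:galois-action}, and this group contains $-1$.

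\emph{Step 3: reality and the Kronecker--Weber remark.} Since $L_\calO \subseteq \R$, complex conjugation acts trivially on it, so $L_\calO$ is a real abelian field. No appeal to Kronecker--Weber is needed, because Step~1 already gives an explicit cyclotomic overfield, namely $\Q(\zeta_N)^+$. We mention Kronecker--Weber only as consistency: it asserts that some cyclotomic field contains $L_\calO$, and Corollary~\ref{cor:cyclo} names one.

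The only point that needs care is Step~2, namely that subfields of abelian extensions are Galois with abelian group. This is standard Galois theory, so no real obstacle is expected.
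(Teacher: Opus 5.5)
Your proof is correct and follows essentially the paper's route. The corollary rests on Corollary~\ref{cor:cyclo} for $L_\calO \subseteq \Q(\zeta_N)^+$ and on Proposition~\ref{prop:galois-group}, which exhibits $\Gal(L_\calO/\Q)$ as a quotient of the abelian group $(\Z/N\Z)^*$. That is exactly your Steps 1--2, and your Galois-correspondence argument makes explicit the normality of $L_\calO/\Q$ that the paper's proposition takes for granted. You are also right that Kronecker--Weber is redundant here, since the explicit overfield $\Q(\zeta_N)^+$ already gives the cyclotomic containment.
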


\section{Spectral Algebras as Graph Isomorphism Invariants}
\label{sec:invariant}

The Wedderburn type of $\calA_K(\Cay(G,S))$ is an isomorphism invariant.

\begin{definition}[Spectral algebra type]
The \emph{type} of $\calA_K(\Cay(G,S))$ is the multiset of $K$-isomorphism
classes of its Wedderburn factors:
$\mathrm{type}_K(\Cay(G,S)) = \{[L_\calO] : \calO \in \calO_K(G,S)\}$.
\end{definition}

\begin{proposition}[Isomorphism invariance]
If $\Cay(G,S) \cong \Cay(G',S')$ as graphs, then
$\mathrm{type}_K(\Cay(G,S)) = \mathrm{type}_K(\Cay(G',S'))$.
\end{proposition}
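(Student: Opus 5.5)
The plan is to show that the whole algebra $\calA_K(\Cay(G,S))$, not merely its type, depends only on the isomorphism class of the graph. The type will then agree automatically. Suppose $\phi : \Cay(G,S) \to \Cay(G',S')$ is a graph isomorphism, so in particular $|G| = |G'| = N$. Let $P \in M_N(K)$ be the permutation matrix of the vertex bijection $\phi$. The adjacency matrices then satisfy $A' = P A P^{-1}$; this holds over $\Z$, and hence over any field $K$, because $P$ has $0/1$ entries.

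First, conjugation by $P$ is a $K$-algebra automorphism of $M_N(K)$. It sends $f(A)$ to $f(PAP^{-1}) = f(A')$ for every $f \in K[x]$, so it restricts to a $K$-algebra isomorphism $K[A] \to K[A']$. Equivalently, $A$ and $A'$ have the same minimal polynomial over $K$, so $m_{G,S,K} = m_{G',S',K}$. By Proposition~\ref{prop:quotient}, both algebras are then isomorphic to $K[x]/(m)$ for this common $m$.

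Second, I pass from the isomorphism of algebras to equality of types. Because $m_{G,S,K} = m_{G',S',K}$, part~(i) of Theorem~\ref{thm:main} gives the same multiset of irreducible factors $\Psi_{\calO,K}$ on both sides. These are exactly the factors $K[x]/(\Psi_{\calO,K})$ that make up the type, so the two multisets of $K$-isomorphism classes coincide.

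The only real subtlety is well-definedness. For this to work, the type must be read off from data intrinsic to the algebra rather than from the presentation. I would handle this either through the equality of minimal polynomials, which avoids the issue entirely, or by invoking the uniqueness of the decomposition of a finite-dimensional commutative semisimple algebra into simple factors. That uniqueness comes from the primitive idempotents, which are canonical (Proposition~\ref{prop:idem}), and any isomorphism permutes them.

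One caveat concerns the hypothesis $\ch(K)\nmid N$. It is needed only for the factor description in Theorem~\ref{thm:main}. Since $N$ is shared by the two isomorphic graphs, the hypothesis holds for one exactly when it holds for the other. I expect no genuine obstacle: the argument reduces to the observation that isomorphic graphs have permutation-similar adjacency matrices.
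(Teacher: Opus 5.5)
Your proposal is correct and follows the paper's own argument: a graph isomorphism conjugates $A$ to $A'$ by a permutation matrix, so the minimal polynomials over $K$ coincide, and by Theorem~\ref{thm:main}(i) the multisets of orbit polynomials $\Psi_{\calO,K}$ coincide as well, hence so do the types. Your additional remarks are sound refinements of the paper's one-line proof: the type is well defined, and the hypothesis $\ch(K)\nmid N$ holds for both graphs since they share $N$.
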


\begin{proof}
A graph isomorphism conjugates the adjacency matrices, preserving the
minimal polynomial and hence the multiset of orbit minimal polynomials.
\end{proof}

\begin{remark}[Limitation of the invariant]
The spectral algebra type does not distinguish all non-isomorphic Cayley
graphs.  For example, $C_6$ and $K_{3,3}$ (the complete bipartite graph
on $3+3$ vertices) have the same spectrum $\{-2,-1,-1,1,1,2\}$ and hence
the same spectral algebra type $\Q^4$ (four distinct rational eigenvalues).
The spectral algebra sees only the distinct eigenvalue set, not multiplicities.
\end{remark}

\begin{proposition}[When the type determines $S$]
Suppose $G = \Z/N$ is cyclic and $S = \{s, -s\}$ is a two-element
connection set.  Then the type $\mathrm{type}_\Q(\Cay(\Z/N,\{s,-s\}))$
uniquely determines $\gcd(s, N)$, hence the isomorphism class of
$\Cay(\Z/N, \{s,-s\})$.
\end{proposition}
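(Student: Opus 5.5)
The plan is to reduce to the cycle case of Theorem~\ref{thm:cyclic} and then recover $\gcd(s,N)$ from the resulting product of real cyclotomic fields; one small case will need to be handled separately, and in fact it fails.

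\emph{Reduction.} Put $g=\gcd(s,N)$ and $m=N/g$, so $m\ge 2$ because $s\ne 0$. Then $\Cay(\Z/N,\{s,-s\})$ is a disjoint union of $g$ copies of $\Cay(\Z/m,\{1,-1\})$:
\begin{itemize}
  \item for $m\ge 3$ each copy is the cycle $C_m$;
  \item for $m=2$ each copy is an edge $K_2$, since then $s=-s$.
\end{itemize}
Spectrally this can be read off directly from the eigenvalues $\lambda_k=2\cos(2\pi ks/N)$. As $k$ ranges over $\Z/N$, the residue $ks$ runs over the multiples of $g$, so $ks/N$ runs over $j/m$ with $j\in\Z/m$. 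Hence the set of distinct eigenvalues equals that of $C_m$ when $m\ge3$, and equals $\{\pm1\}$ when $m=2$.

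By Theorem~\ref{thm:main} and Corollary~\ref{cor:dim}, the spectral algebra depends only on this set of distinct eigenvalues. Theorem~\ref{thm:cyclic} then gives
\[
\calA_\Q\cong\prod_{d\mid m}\Q(\zeta_d)^+ \quad\text{for } m\ge 3, \qquad \calA_\Q\cong\Q^2 \quad\text{for } m=2 .
\]
The isomorphism-class part of the claim is then immediate: $g$ determines the graph as $gC_m$ (or $gK_2$), and conversely.

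\emph{Recovering $m$ from the type.} Since $N$ is fixed, it suffices to recover $m$ among the divisors of $N$.
\begin{itemize}
  \item The total degree $\sum[L_\calO:\Q]=\lfloor m/2\rfloor+1$ determines $m$ up to the pair $\{2t,2t+1\}$.
  \item The number of factors $\tau(m)$ and the multiset of factor degrees $\max(1,\varphi(d)/2)$, $d\mid m$, are also invariants.
  \item For the parity decision I would first compare the number of rational factors. This count is $\#(\{1,2,3,4,6\}\cap\{d:d\mid m\})$. For $m$ odd it is at most $2$, coming only from $d\in\{1,3\}$. For $m$ even it is at least $2$, and at least $3$ as soon as $4\mid m$ or $3\mid m$.
  \item Failing that, I would use the largest factor $\Q(\zeta_m)^+$, of degree $\varphi(m)/2$. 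Its abstract isomorphism class (for example its discriminant or its Galois group, via Proposition~\ref{prop:galois-group}) pins down the conductor up to the classical ambiguity $d\leftrightarrow 2d$ for odd $d$. Combined with the dimension, which separates $m$ from $2m$ when both are candidates, this should decide.
\end{itemize}

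\emph{Main obstacle: small $m$.} The hardest step is the pair $m\in\{2,3\}$, and there the argument does not go through. Take $N=6$:
\begin{itemize}
  \item $s=3$ gives $3K_2$, with eigenvalues $\{\pm1\}$;
  \item $s=2$ gives $2K_3$, with eigenvalues $\{2,-1\}$.
\end{itemize}
Both spectral algebras are $\Q^2$, so they have the same type while $\gcd(s,N)\in\{3,2\}$ differ. Whether the ambiguity recurs for other small even/odd pairs dividing $N$ would still have to be checked. I would therefore state the result under the hypothesis $m=N/\gcd(s,N)\ge 3$, or more carefully exclude the coincidence $\{2,3\}\subseteq\{m:m\mid N\}$ realised by two distinct connection sets. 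I would note that, without such a hypothesis, the proposition as stated is false. For the remaining cases I would carry out the parity argument above, checking the cases $m\le 6$ by hand.
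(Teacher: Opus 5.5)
\textbf{The counterexample is outside the hypotheses.} For $N=6$, $s=3$ you have $-s=s$, so $\{s,-s\}=\{3\}$ has one element. The proposition assumes $S=\{s,-s\}$ is a \emph{two-element} connection set. In general $|\{s,-s\}|=2$ if and only if $2s\not\equiv 0 \pmod N$, which holds if and only if $m=N/\gcd(s,N)\ge 3$. So the restriction you want to add is already in the statement, and your $m=2$ case is vacuous. You even note that $s=-s$ there, but you do not draw the consequence. Your verdict that the proposition is false should be withdrawn. The collision $\{2,3\}$ you found is in fact the only one, and it is exactly what $|S|=2$ excludes.

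\textbf{The remaining gap is the parity step.} Separating $m=2t$ from $m=2t+1$ is only sketched (``this should decide'', ``would still have to be checked''). You are right that such a step is needed. The paper's proof claims that $\dim=\lfloor N'/2\rfloor+1$ alone determines $N'$. But for odd $N'$ this gives $N'=2\dim-1$, not $2\dim-2$, so the dimension fixes $m$ only up to $\{2t,2t+1\}$, with $t\ge 2$ once $m\ge 3$. Here is a short way to finish without conductors.
\begin{itemize}
\item[(a)] If $t$ is even, $\Q$ occurs at least three times in the type of $C_{2t}$ (from $d=1,2,4$). It occurs at most twice in the type of $C_{2t+1}$ (from $d=1,3$).
\item[(b)] If $t$ is odd, the divisors of $2t$ pair off as $d\leftrightarrow 2d$ with $d\mid t$, and $\Q(\zeta_{2d})^+=\Q(\zeta_d)^+$. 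Hence every isomorphism class in the type of $C_{2t}$ has even multiplicity.
\item[(c)] Now take odd $m=2t+1\ge 7$. For any proper divisor $d$ of $m$ one has $\varphi(m)/\varphi(d)\ge\varphi(m/d)\ge 2$. So every factor $\Q(\zeta_d)^+$ with $d<m$ has degree at most $\max(1,\varphi(m)/4)<\varphi(m)/2$. Thus $\Q(\zeta_m)^+$, of degree $\varphi(m)/2\ge 2$, is the only factor of that degree and occurs exactly once.
\end{itemize}
Comparing (b) and (c) handles odd $t$, and (a) handles even $t$, so the two types always differ. Together with your reduction of the type to that of $g\,C_m$, this proves the proposition as stated.
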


\begin{proof}
Let $d = \gcd(s,N)$ and $N' = N/d$.  The adjacency matrix is
block-diagonal with $d$ identical blocks $A_{C_{N'}}$, so
$K[A] \cong K[A_{C_{N'}}]$ (the algebra generated by identical blocks
equals that of a single block).  The Wedderburn type of $K[A]$ therefore
equals $\mathrm{type}_\Q(C_{N'}) = \prod_{e \mid N'} \Q(\zeta_e)^+$,
with summand field degrees $\{\varphi(e)/2 : e \mid N', e \ge 3\}
\cup \{1\}$.  The value $N'$ is determined from this type as follows:
$\dim_\Q \calA_\Q(\Cay(\Z/N,\{s,-s\})) = \lfloor N'/2\rfloor + 1$,
so $N' = 2(\dim - 1)$ (for odd $N'$) or $N' = 2\dim - 2$ (for even $N'$),
both uniquely determined by the dimension.  Hence $d = N/N' = \gcd(s,N)$.
\end{proof}

\section{The Group Algebra and the Spectral Algebra}
\label{sec:groupalg}

The connection between the spectral algebra and the group algebra $K[G]$
illuminates both structures.

\begin{proposition}[Group algebra Wedderburn decomposition]
\label{prop:groupalg}
For a finite abelian group $G$ of order $N$ and a field $K$ with
$\ch(K) \nmid N$, the group algebra $K[G]$ is semisimple and decomposes as
\[
  K[G] \;\cong\; \prod_{\calO \in \Ghat/\Gal(\overline{K}/K)} L_\calO,
\]
where $\calO$ runs over Galois orbits of characters and
$L_\calO = K(\chi(g) : g \in G,\, \chi \in \calO)$ is the field generated
by the character values of orbit $\calO$.  For $K = \Q$ and $G = \Z/N$
cyclic, the orbit fields are subfields of $\Q(\zeta_N)$.  For non-cyclic
$G$, the orbit fields are also subfields of $\Q(\zeta_N)$, but their
structure depends on the specific character orbits, not on a single generator.
\end{proposition}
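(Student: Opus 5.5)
The plan is to present $K[G]$ as a quotient of a polynomial ring and then repeat the Chinese Remainder argument used for Theorem~\ref{thm:main}. The prototype is the cyclic case. If $G=\Z/N$, then $K[G]\cong K[x]/(x^N-1)$. Since $\ch(K)\nmid N$, the polynomial $x^N-1$ is separable, as noted in Theorem~\ref{thm:dft}. Its irreducible factors over $K$ are the minimal polynomials of the Galois orbits of $N$-th roots of unity. Because $\chi\mapsto\chi(1)$ identifies $\Ghat$ with the $N$-th roots of unity $\mu_N(\overline K)$, these orbits are exactly the $\Gal(\overline K/K)$-orbits on $\Ghat$. The Chinese Remainder Theorem then gives a product of fields $K(\chi(1))=K(\chi(g):g\in G)$, one factor per orbit.

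For general abelian $G$ I would avoid choosing generators and argue intrinsically through the Fourier map
\[
\Phi:\overline K[G]\to\prod_{\chi\in\Ghat}\overline K,\qquad \textstyle\sum a_g g\mapsto\bigl(\sum a_g\chi(g)\bigr)_\chi .
\]
This map is a $\overline K$-algebra homomorphism. It is an isomorphism because its matrix is the DFT matrix $F$ of Theorem~\ref{thm:dft}, which is invertible when $\ch(K)\nmid N$. Semisimplicity of $K[G]$ follows at once: $K[G]\otimes_K\overline K$ is reduced, so $K[G]$ is reduced, and a reduced finite-dimensional commutative algebra is a product of fields.

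To identify the factors, I would compute the $K$-algebra homomorphisms $K[G]\to\overline K$. They correspond bijectively to $\Ghat$ via $g\mapsto\chi(g)$. Each factor field $L$ of $K[G]$ accounts for $[L:K]$ of these homomorphisms, since $L/K$ is separable, and those homomorphisms form a single $\Gal(\overline K/K)$-orbit. This requires that $\sigma\in\Gal(\overline K/K)$ act on $\Ghat$ by $\chi\mapsto\sigma\circ\chi$, which is the analogue of Lemma~\ref{lem:galois-action}. Grouping the homomorphisms by orbit gives one factor $L_\calO$ per orbit. For $\chi\in\calO$, that factor is isomorphic to the image of $K[G]$ under $\chi$, namely $K[\chi(g):g\in G]=K(\chi(g):g\in G)$. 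This is a field because its generators are roots of unity, hence algebraic. Conjugate characters give isomorphic fields, so $L_\calO$ is well defined. Finally, over $\Q$ every character value is an $N$-th root of unity, so $L_\calO\subseteq\Q(\zeta_N)$ for cyclic and non-cyclic $G$ alike.

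The main obstacle is the bookkeeping in the step that matches factors to orbits: showing that each field factor corresponds to exactly one full Galois orbit of characters, with degree equal to the orbit size. I would handle it by counting. The dimension count gives $\sum_L[L:K]=N=|\Ghat|$. Moreover, each factor contributes exactly one Galois orbit of embeddings, and $\Hom_K(L,\overline K)$ is a single transitive $\Gal(\overline K/K)$-set for $L/K$ separable.
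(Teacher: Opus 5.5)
Your argument is correct, but it takes a different route from the paper's.

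\textbf{The paper's route.} It gets semisimplicity directly from Maschke's theorem. It then fixes a decomposition $G\cong\Z/n_1\times\cdots\times\Z/n_r$ and writes $K[G]\cong K[x_1,\ldots,x_r]/(x_1^{n_1}-1,\ldots,x_r^{n_r}-1)$. Finally it asserts that the Chinese Remainder Theorem yields one field per Galois orbit. This is short and concrete. However, it leaves implicit the step that actually matches factors to orbits. Applying CRT variable by variable produces tensor products such as $K(\zeta_{d_1})\otimes_K K(\zeta_{d_2})$. These split further into several copies of a compositum, and those copies still have to be counted.

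\textbf{Your route.} You obtain semisimplicity from the DFT of Theorem~\ref{thm:dft} by descending reducedness from $\overline K[G]$. You then identify the factors intrinsically through $\Hom_K(K[G],\overline K)=\Ghat$. This avoids any choice of coordinates. It also carries out the orbit bookkeeping honestly, including the fact that each factor has degree equal to the size of its orbit.

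\textbf{Two small points to tighten.}

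First, you do not need to assume that each factor $L$ is separable. It follows from your own count: $N=|\Ghat|=\sum_L|\Hom_K(L,\overline K)|\le\sum_L[L:K]=N$, so equality holds throughout. Alternatively, $L$ is generated by the images of the elements $g\in G$, which are roots of the separable polynomial $x^N-1$.

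Second, the statement defines $L_\calO$ using the values of all $\chi\in\calO$, not just one. You should note that $K(\chi(g):g\in G)=K(\mu_m)$, where $m$ is the order of $\chi$. This field is normal over $K$, so every conjugate $\sigma\circ\chi$ takes its values in the same subfield of $\overline K$. Hence the fields attached to the various $\chi\in\calO$ are equal, not merely isomorphic, and they coincide with the compositum in the statement.
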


\begin{proof}
Since $\ch(K) \nmid N$, the group algebra $K[G]$ is semisimple by
Maschke's theorem~\cite{Lang02}.  For $G \cong \Z/n_1 \times \cdots
\times \Z/n_r$, $K[G] \cong K[x_1,\ldots,x_r]/(x_1^{n_1}-1,\ldots,
x_r^{n_r}-1)$, and each factor is separable.  The Chinese Remainder
Theorem gives the orbit decomposition with one field factor per Galois
orbit of character values.
\end{proof}

\begin{remark}[Spectral algebra as image of group algebra]
The spectral algebra $\calA_K(\Cay(G,S))$ is the image
of $K[G]$ under the ring homomorphism
\[
  \phi_S : K[G] \to M_N(K), \quad g \mapsto P_g,
\]
where $P_g$ is the permutation matrix for translation by $g$.  The
adjacency matrix is $A = \sum_{s \in S} P_s = \phi_S\!\bigl(\sum_{s \in S} g\bigr)$,
so $\calA_K(\Cay(G,S)) = K\!\left[\phi_S\!\bigl(\sum_{s\in S} g\bigr)\right]$
is a quotient of the image of $\phi_S$.  The map $\phi_S$ is an
isomorphism from $K[G]$ to the Bose--Mesner algebra
$\mathcal{M} \cong K[G]$ (as proved in Proposition~\ref{prop:bose}),
and $\calA_K(\Cay(G,S)) \subseteq \mathcal{M}$ is the subalgebra
generated by the single element $\sum_{s \in S} P_s$.
\end{remark}

The key difference between $K[G]$ and $\calA_K(\Cay(G,S))$ is the
number of generators: $K[G]$ needs all $N$ translation matrices, while
$\calA_K(\Cay(G,S))$ uses only their $S$-weighted sum.  The dimension of
$\calA_K(\Cay(G,S))$ is at most $N$ (the dimension of $K[G]$), with
equality only if the minimal polynomial of $A$ has degree $N$, i.e., if
$A$ generates the full Bose--Mesner algebra.

\begin{proposition}[Spectral subalgebras are always proper for symmetric $S$]
\label{prop:full}
For a symmetric connection set $S = -S$, one always has
$\lambda_\chi = \lambda_{\chi^{-1}}$, so the eigenvalue map cannot be
injective on all of $\Ghat$ (since $\chi \ne \chi^{-1}$ in general).
Hence $\calA_K(\Cay(G,S)) \subsetneq \mathcal{M}$ for every non-trivial
symmetric connection set on a group $G$ of order $N \ge 3$.

More precisely, $\dim_K \calA_K(\Cay(G,S))$ equals the number of distinct
eigenvalues, which is at most the number of $\langle -1 \rangle$-orbits on
$\Ghat$ (since $\lambda_\chi = \lambda_{\chi^{-1}}$ forces orbit-level
collapse).  Since $N \ge 3$ implies at least one non-trivial pair
$\{\chi, \chi^{-1}\}$ with $\chi \ne \chi^{-1}$, the number of orbits
is strictly less than $|\Ghat| = N = \dim_K \mathcal{M}$.
\end{proposition}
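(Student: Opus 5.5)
The plan is to compare dimensions. By Proposition~\ref{prop:bose}, $\calA_K(\Cay(G,S)) = K[A] \subseteq \mathcal{M}$. The translation matrices $P_g$ are linearly independent, since their supports are disjoint, so $\dim_K \mathcal{M} = N$. It therefore suffices to prove $\dim_K \calA_K(\Cay(G,S)) < N$. Corollary~\ref{cor:dim} says this dimension equals the number $D$ of distinct eigenvalues $\lambda_\chi$, $\chi \in \Ghat$, so the whole task is to show $D < N$ whenever $N \ge 3$.

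\emph{Case 1: $G$ has an element of order $\ge 3$.} Then $\Ghat \cong G$ contains a character $\chi$ with $\chi \ne \chi^{-1}$. Lemma~\ref{lem:char-sum}(i) gives $\lambda_\chi = \lambda_{\chi^{-1}}$, so $\chi \mapsto \lambda_\chi$ factors through the set of $\langle -1\rangle$-orbits $\{\chi,\chi^{-1}\}$ on $\Ghat$. At least one of these orbits has size $2$, so the number of orbits is at most $N-1$. Hence $D \le N-1 < N$.

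\emph{Case 2: $G \cong (\Z/2)^k$.} Here every character equals its own inverse, and $N \ge 3$ forces $k \ge 2$, so $N \ge 4$. The stated justification (``$N \ge 3$ implies a non-trivial pair'') fails in this case, so I will give a separate counting argument. Every $\chi(s)$ is $\pm 1$, so each $\lambda_\chi$ is an integer of the form $|S| - 2w$ with $0 \le w \le |S|$. This gives at most $|S|+1$ possible values, and $|S| \le N-1$.

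\begin{itemize}
\item If $|S| \le N-2$, then $D \le |S|+1 \le N-1 < N$.
\item If $|S| = N-1$, that is $S = G \setminus \{0\}$, the proof of Proposition~\ref{prop:bose} shows $D = 2 < N$.
\item If $S = \emptyset$, then $A = 0$ and $D = 1$.
\end{itemize}

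Combining the two cases gives $D < N$, hence $K[A] \subsetneq \mathcal{M}$.

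The only real obstacle is the elementary abelian $2$-group case, where the involution $\chi \mapsto \chi^{-1}$ is trivial and the orbit-counting bound in the ``more precisely'' paragraph gives only $D \le N$. I handle it with the parity and range bound on integer eigenvalues above, rather than by any Galois or orbit argument. When $\ch(K) \nmid N$ the eigenvalues are read in $\overline{K}$. Reduction can only merge eigenvalues, not separate them, so the bound $D < N$ still holds there.
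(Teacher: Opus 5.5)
Your proof is correct. In Case 1 it is exactly the paper's argument: $\lambda_\chi=\lambda_{\chi^{-1}}$ makes $\chi\mapsto\lambda_\chi$ factor through the $\langle -1\rangle$-orbits on $\Ghat$, and one orbit of size $2$ forces $D\le N-1$.

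Case 2 is where you differ, and there you are repairing the paper rather than following it. The paper's proof asserts that for $N\ge 3$ there is always a character of order $>2$. That fails for $G=(\Z/2)^k$ with $k\ge 2$: there every character is its own inverse, and the number of $\langle -1\rangle$-orbits is exactly $N$. So the ``more precisely'' clause of the statement is false as written and yields only $D\le N$, even though the headline conclusion $\calA_K(\Cay(G,S))\subsetneq\mathcal{M}$ survives.

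Your replacement closes this gap with an elementary count. The eigenvalues are images of the integers $|S|-2w$, so $D\le |S|+1\le N-1$ unless $S=G\setminus\{0\}$, and in that case $D=2$. The paper's uniform pairing argument is shorter, but it only covers groups that are not elementary abelian $2$-groups; your case split is what makes the proof complete.

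Two smaller remarks. Your check that $\dim_K\mathcal{M}=N$ is something the paper takes for granted; it also needs $P_gP_h=P_{g+h}$, so that the span of the $P_g$ is already the whole algebra. Your closing remark about reduction is harmless but unnecessary, since both cases already argue directly with the values $\lambda_\chi\in\overline{K}$ for any $K$ with $\ch(K)\nmid N$.
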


\begin{proof}
The symmetry $S = -S$ forces $\lambda_\chi = \lambda_{\chi^{-1}}$ for
all $\chi$ (Lemma~\ref{lem:char-sum}(i)), so eigenvalue multiplicity
collapses each pair $\{\chi, \chi^{-1}\}$.  For $N \ge 3$ there exist
$\chi$ with $\chi \ne \chi^{-1}$ (any character of order $> 2$), so the
number of distinct eigenvalues is strictly less than $|\Ghat| = N$, hence
$\dim_K \calA_K < N = \dim_K \mathcal{M}$.
\end{proof}

\begin{remark}[Generating the full Bose--Mesner algebra]
Over fields $K$ with $\ch(K) \nmid N$, the full Bose--Mesner algebra
$\mathcal{M} \cong K[G]$ has dimension $N$ and requires all $N$
translation generators $\{P_g : g \in G\}$.  A single adjacency matrix $A$
can generate at most a $({\lfloor N/2 \rfloor + 1})$-dimensional subalgebra
when $S = -S$.  Asymmetric connection sets (with $S \ne -S$, not considered
here) can potentially generate larger subalgebras.
\end{remark}

\begin{example}[$G = \Z/7$, $S = \{1,-1\}$]
The distinct eigenvalues are $2$, $2\cos(2\pi/7)$,
$2\cos(4\pi/7)$, $2\cos(6\pi/7)$.  Since $4 < 7$,
$\calA_\Q(C_7) \subsetneq \mathcal{M}$ with
$\dim \calA_\Q(C_7) = 4$.
\end{example}

\section{Expansion and the Spectral Gap}
\label{sec:expansion}

A connected $d$-regular graph on $N$ vertices is an \emph{$\varepsilon$-expander}
if the second-largest eigenvalue satisfies $\lambda_1 < d - \varepsilon$.
For Cayley graphs on abelian groups, the spectral algebra encodes the
expansion through its Wedderburn factors.

\begin{definition}[Spectral gap]
For $\Cay(G,S)$ connected and $d$-regular ($d = |S|$), the
\emph{spectral gap} is
\[
  \gamma(\Cay(G,S))
  \coloneqq d - \max\!\bigl\{\lambda_\chi : \chi \ne \chi_0\bigr\}
  = d - \lambda_2,
\]
where $\lambda_2$ is the second-largest eigenvalue.
\end{definition}

The spectral gap determines expansion via $\varepsilon = \gamma$, and
influences random walk mixing times.  In the spectral algebra:

\begin{proposition}[Spectral gap and idempotents]
\label{prop:spectral-gap}
The spectral gap satisfies
\[
  \gamma = d \cdot \min_{\chi \ne \chi_0}
  \left(1 - \frac{\lambda_\chi}{d}\right).
\]
Within the spectral algebra, $\gamma$ measures the distance from the
projection $e_{\chi_0}(A)$ to the next closest eigenvalue projection:
$\gamma = d - \max_{\calO \ne \calO_0} \sup\{x : x \in \calO\}$.
\end{proposition}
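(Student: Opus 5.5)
The first formula is a rewriting of the definition of $\gamma$. Since $d = |S| > 0$, we have $d\bigl(1 - \lambda_\chi/d\bigr) = d - \lambda_\chi$ for every $\chi$. The minimum over $\chi \ne \chi_0$ of $d - \lambda_\chi$ is $d - \max_{\chi\ne\chi_0}\lambda_\chi$, because the map $t \mapsto d - t$ is decreasing. This is exactly $d - \lambda_2 = \gamma$. All eigenvalues are real by Lemma~\ref{lem:char-sum}(i) and the remark following it, so the maximum makes sense. The index set $\Ghat\setminus\{\chi_0\}$ is finite and non-empty as long as $N \ge 2$, which we assume implicitly.

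For the second formula I would use the orbit decomposition of Theorem~\ref{thm:main} over $K=\Q$ (or $\R$). The principal eigenvalue $\lambda_{\chi_0} = d$ is rational by Lemma~\ref{lem:char-sum}(ii), so it forms a singleton orbit $\calO_0 = \{d\}$. Its primitive idempotent $e_{\calO_0}$ from Proposition~\ref{prop:idem} is the spectral projection onto the $d$-eigenspace. By Theorem~\ref{thm:dft}, the set $\{\lambda_\chi : \chi\in\Ghat\}$ is the disjoint union of the orbits, so $\bigcup_{\calO\ne\calO_0}\calO$ is the set of distinct eigenvalues different from $d$. Then
\[
\max_{\calO\ne\calO_0}\sup\{x : x\in\calO\} = \max\{\lambda_\chi : \lambda_\chi \ne d\}.
\]

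The main obstacle is reconciling this with $\max_{\chi\ne\chi_0}\lambda_\chi$. The two maxima coincide exactly when no non-principal character has $\lambda_\chi = d$. By the equality case of the triangle inequality in Lemma~\ref{lem:char-sum}(iii), $\lambda_\chi = d$ forces $\chi(s)=1$ for all $s\in S$. Because the definition of $\gamma$ assumes $\Cay(G,S)$ is connected, $S$ generates $G$, so such a $\chi$ is trivial on $G$, hence $\chi = \chi_0$. Thus $d$ has multiplicity one, and the identity follows. In the disconnected case I would point out that the orbit formula computes the gap between $d$ and the next distinct eigenvalue, while $\gamma$ itself equals $0$.
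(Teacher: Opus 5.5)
Your proposal is correct and follows the paper's argument. The first identity is a rewriting of the definition, and the orbit form reduces to the fact that $\lambda_\chi = d$ only for $\chi = \chi_0$ when $S$ generates $G$, via the equality case of the triangle inequality.

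Your version is slightly more careful than the paper's. You apply the equality case to $\lambda_\chi = d$ rather than to $|\lambda_\chi| = d$, as the paper does; the latter can also occur with $\lambda_\chi = -d$, for instance for bipartite graphs such as $Q_k$. You also make explicit the orbit bookkeeping, and the restriction to real $K$ needed for the supremum to make sense, which the paper leaves implicit.
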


\begin{proof}
Immediate from the definition.  The eigenvalue $\lambda_\chi = d$ occurs
only for $\chi = \chi_0$ (principal character), since $|\lambda_\chi| \le d$
with equality iff $\chi(s) = 1$ for all $s \in S$, which forces
$\chi = \chi_0$ when $S$ generates $G$.
\end{proof}

\begin{example}[Spectral gap of $Q_k$]
$Q_k$ is $k$-regular with eigenvalues $k, k-2, \ldots, -k$.  The
second-largest eigenvalue is $k-2$, giving spectral gap $\gamma = 2$.
Since $2/k \to 0$ as $k \to \infty$, the Hamming cubes are
\emph{poor expanders}: the ratio $\gamma/d = 2/k$ tends to zero.
\end{example}

\begin{example}[Spectral gap of $\Cay(\Z/5 \times \Z/2, S)$]
From Example~\ref{ex:Z5Z2}, the connection set $S = \{(1,0),(-1,0),(0,1)\}$
has $|S| = 3$.  The eigenvalues are $3, (\sqrt{5}+1)/2, 1, (\sqrt{5}-3)/2,
(1-\sqrt{5})/2, -(3+\sqrt{5})/2$.  The second largest is
$(\sqrt{5}+1)/2 \approx 1.618$, giving spectral gap
$\gamma = 3 - (\sqrt{5}+1)/2 = (5-\sqrt{5})/2 \approx 1.382$.
\end{example}

\begin{remark}[Ramanujan property]
A $d$-regular graph is \emph{Ramanujan} if $|\lambda_\chi| \le 2\sqrt{d-1}$
for all non-principal characters.  For abelian Cayley graphs, the
eigenvalues $\lambda_\chi \in \Q(\zeta_N)^+$ are bounded by $|S|$;
the Ramanujan condition is automatically satisfied when $|S| = 2$
(the cycle case) since $|\lambda_\chi| \le 2 = 2\sqrt{1}$.  For $|S| \ge 3$,
explicit verification via the Wedderburn factor eigenvalue bounds is required.
\end{remark}

\section{Multiple Cayley Graphs on the Same Group}
\label{sec:multiple}

Different symmetric connection sets $S \ne S'$ on the same group $G$
give different Cayley graphs but with the same ambient Bose--Mesner algebra.
The spectral algebras need not be isomorphic.

\begin{proposition}[Distinct $S$ can give isomorphic spectral algebras]
\label{prop:same-type}
Two Cayley graphs $\Cay(G,S)$ and $\Cay(G,S')$ have
$\calA_K(\Cay(G,S)) \cong \calA_K(\Cay(G,S'))$ as $K$-algebras if and
only if the eigenvalue multisets $\{\lambda_\chi\}_\chi$ and
$\{\lambda'_\chi\}_\chi$ have the same partition into Galois orbits of
the same sizes.
\end{proposition}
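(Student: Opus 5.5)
By Theorem~\ref{thm:main}(ii), both algebras are finite products of fields:
\[
\calA_K(\Cay(G,S)) \cong \prod_{\calO} L_\calO, \qquad \calA_K(\Cay(G,S')) \cong \prod_{\calO'} L'_{\calO'},
\]
with $L_\calO = K[x]/(\Psi_{\calO,K})$ and $[L_\calO:K] = |\calO|$. The plan is to reduce the statement to the classical fact that a finite product of fields determines its factors. A $K$-algebra isomorphism $\Phi$ between the two products carries idempotents to idempotents and preserves the relations in Proposition~\ref{prop:idem}. It therefore maps the set of primitive idempotents $\{e_\calO\}$ bijectively onto $\{e'_{\calO'}\}$, and it restricts to $K$-algebra isomorphisms $e_\calO\calA \cong \Phi(e_\calO)\calA'$ between the corresponding field summands. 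By Corollary~\ref{cor:idem}, the number $r$ of orbits is an invariant. The primitive idempotents are exactly the minimal nonzero idempotents, so the correspondence is canonical.

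\emph{Only-if direction.} From the bijection $\calO \mapsto \calO'$ with $L_\calO \cong_K L'_{\calO'}$, take degrees: $|\calO| = [L_\calO:K] = [L'_{\calO'}:K] = |\calO'|$. The orbit partitions of the two distinct-eigenvalue sets therefore have the same number of blocks and the same multiset of block sizes. In particular the dimensions agree, consistent with Corollary~\ref{cor:dim}.

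\emph{If direction.} Given a size-preserving bijection of orbits, we need $K$-isomorphisms $L_\calO \cong L'_{\calO'}$ factor by factor; CRT assembly then gives the global isomorphism. For $K = \F_q$ (Proposition~\ref{prop:Fq}) this is automatic, since there is a unique extension of each degree, $\F_{q^d}$. Over $\Q$, however, equal degree does not force isomorphism. For example, the two orbits could generate $\Q(\sqrt5)$ and $\Q(\sqrt3)$, as happens for $C_5$ versus a suitable graph with a $\Q(\zeta_{12})^+$ summand. So the main obstacle is that the literal ``same sizes'' hypothesis is too weak in general.

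I would therefore prove the statement in the sharpened form: the algebras are $K$-isomorphic if and only if there is a bijection $\calO \leftrightarrow \calO'$ with $K(\lambda_\calO) \cong_K K(\lambda'_{\calO'})$. I would then record the stated size criterion as an equivalent condition exactly when $K$ has at most one extension of each degree inside the relevant field, e.g.\ $K$ finite. For $K = \Q$, each $L_\calO$ is a subfield of $\Q(\zeta_N)^+$ (Theorem~\ref{thm:main}(iv)), and $\Gal(\Q(\zeta_N)^+/\Q)$ is abelian. So $L_\calO \cong L'_{\calO'}$ iff the two subfields coincide, iff their stabilisers in $(\Z/N\Z)^*$ agree (Proposition~\ref{prop:galois-group}). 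This makes the refined condition checkable from the orbit data. Finally, I would include a short counterexample showing that equal orbit sizes alone cannot suffice over $\Q$.
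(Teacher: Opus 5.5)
Your diagnosis is correct, and your argument is a more careful version of the paper's. The paper's whole proof is one sentence: both algebras are products of fields, the isomorphism type is determined by the multiset of field extensions, ``which is determined by the orbit partition.''

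\begin{itemize}
\item Your primitive-idempotent argument is the rigorous form of the first clause. It gives the only-if direction: isomorphic algebras have the same number of orbits and the same multiset of sizes, since $[L_\calO:K]=|\calO|$.
\item The last clause is where the paper slips. The orbits themselves determine the fields $K(\lambda_\calO)$, but their sizes do not. So the paper's argument really proves your sharpened criterion, a bijection of orbits with $K$-isomorphic fields. Over $\Q$, the ``if'' direction of the proposition as literally stated is false.
\item Under the other reading (``the same orbits''), it is the only-if direction that fails. $\Cay(\Z/4,\{2\})$ and $\Cay(\Z/4,\{1,2,3\})$ both give $\Q^2$, with eigenvalues $\{1,-1\}$ and $\{3,-1\}$ respectively.
\end{itemize}
Your reformulation is the right statement.

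Two points to tighten.

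First, your illustration is not yet a counterexample. $C_5$ lives on $\Z/5$, while the proposition compares two connection sets on one group, and the entire multisets of orbit sizes must agree. Here is a concrete one on $G=\Z/5\times\Z/13$:
\begin{itemize}
\item $S=\{(\pm1,0)\}\cup(\{0\}\times(\Z/13\setminus\{0\}))$, giving $C_5\,\square\,K_{13}$ (with $K_m$ the complete graph).
\item $S'=((\Z/5\setminus\{0\})\times\{0\})\cup(\{0\}\times\{\pm1,\pm3,\pm4\})$, giving $K_5\,\square\,P(13)$ (with $P(13)$ the Paley graph).
\end{itemize}
The distinct eigenvalues are $14$, $1$, $(23\pm\sqrt5)/2$, $(-3\pm\sqrt5)/2$ for the first graph, and $10$, $5$, $(7\pm\sqrt{13})/2$, $(-3\pm\sqrt{13})/2$ for the second. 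Both algebras therefore have orbit sizes $1,1,2,2$. Yet $\Q^2\times\Q(\sqrt5)^2\not\cong\Q^2\times\Q(\sqrt{13})^2$, by your own idempotent argument.

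Second, you have only shown that uniqueness of extensions of each degree is \emph{sufficient} for the size criterion to work. Examples are $K$ finite, or $K=\Q$ when $\Gal(\Q(\zeta_N)^+/\Q)$ is cyclic. ``Exactly when'' would also need a realisability argument, so state it as a sufficient condition.
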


\begin{proof}
Both algebras are semisimple products of fields.  The isomorphism type
is determined by the multiset of field extensions, which is determined
by the orbit partition.
\end{proof}

\begin{example}[Two connection sets on $\Z/12$]
\mbox{}
\begin{enumerate}[label=\textup{(\roman*)}]
  \item $\Cay(\Z/12, \{1,-1\}) = C_{12}$ has
    $\calA_\Q \cong \Q^5 \times \Q(\sqrt{3})$, dimension $7$, idempotents
    $2^6 = 64$.
  \item $\Cay(\Z/12, \{3,-3\})$ has eigenvalues
    $2\cos(\pi k/2)$ for $k = 0, \ldots, 5$.
    The distinct values are $\{-2, 0, 2\}$, so
    $\calA_\Q(\Cay(\Z/12, \{3,-3\})) \cong \Q^3$,
    dimension $3$, idempotents $2^3 = 8$.
\end{enumerate}
These algebras are not isomorphic despite $G$ being the same group.
\end{example}

\begin{theorem}[Maximum spectral algebra on $\Z/N$]
\label{thm:max}
For $G = \Z/N$ cyclic, among all symmetric generating sets
$S \subseteq \Z/N \setminus \{0\}$, the maximum dimension of
$\calA_\Q(\Cay(\Z/N, S))$ is $\lfloor N/2 \rfloor + 1$, achieved by the
cycle graph $C_N$ with $S = \{1,-1\}$.
\end{theorem}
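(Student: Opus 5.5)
Two things have to be shown: an upper bound $\dim_\Q \calA_\Q(\Cay(\Z/N,S)) \le \lfloor N/2\rfloor + 1$ valid for every symmetric $S$, and the fact that $S=\{1,-1\}$ attains it. Attainment is already done: Theorem~\ref{thm:cyclic} gives $\dim_\Q \calA_\Q(C_N) = \lfloor N/2\rfloor + 1$. The whole argument therefore reduces to the upper bound.

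For the upper bound I will use Corollary~\ref{cor:dim}. It says the dimension equals the number of distinct eigenvalues $\lambda_\chi$ as $\chi$ ranges over $\widehat{\Z/N}$. By Lemma~\ref{lem:char-sum}(i), $\lambda_\chi = \lambda_{\chi^{-1}}$, so the map $\chi \mapsto \lambda_\chi$ is constant on the orbits of the inversion involution $\chi \mapsto \chi^{-1}$. The number of distinct eigenvalues is therefore at most the number of such orbits. This is the same bound already recorded in Proposition~\ref{prop:full}.

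Next I count those orbits. Identify $\widehat{\Z/N}$ with $\Z/N$ via $\chi_k(j)=\zeta_N^{kj}$, so inversion becomes $k\mapsto -k$. The fixed points are the solutions of $2k\equiv 0 \pmod N$. There is one fixed point ($k=0$) when $N$ is odd and two ($k=0,N/2$) when $N$ is even. Burnside's lemma, or direct pairing, gives the orbit count as $(N+1)/2$ for odd $N$ and $N/2+1$ for even $N$. In both cases this equals $\lfloor N/2\rfloor+1$.

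Combining the two parts finishes the proof. The paper's bound is stated for any symmetric $S$, so the generating hypothesis is not needed for the upper bound; it only guarantees that $C_N$ is a legitimate competitor, since $\{1,-1\}$ generates $\Z/N$. One edge case needs a remark: for $N=2$ the set $\{1,-1\}=\{1\}$ is a single element, but the formula still gives $2$, matching the two eigenvalues $\pm1$. I expect no real obstacle here. The only care needed is the fixed-point count in the parity split, and matching the orbit count exactly to the value in Theorem~\ref{thm:cyclic}.
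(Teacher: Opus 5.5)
Your proof is correct and is essentially the paper's argument: bound the number of distinct eigenvalues (and hence the dimension, by Corollary~\ref{cor:dim}) by the $\lfloor N/2\rfloor+1$ orbits of the inversion $k\mapsto -k$ on $\widehat{\Z/N}$, then observe that $C_N$ attains this bound. The only cosmetic difference is that you take attainment from Theorem~\ref{thm:cyclic}, whereas the paper checks directly that the values $2\cos(2\pi k/N)$, $0\le k\le\lfloor N/2\rfloor$, are pairwise distinct.
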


\begin{remark}[Non-cyclic groups]
For non-cyclic $G$, the number of $\chi \mapsto \chi^{-1}$ orbits on
$\Ghat$ can exceed $\lfloor N/2 \rfloor + 1$.  For example,
$G = \Z/2 \times \Z/4$ ($N = 8$) has six inversion orbits (four singletons
and two pairs), exceeding $\lfloor 8/2 \rfloor + 1 = 5$.  The dimension
of the spectral algebra for general abelian $G$ is \emph{bounded above}
by the number of $\langle -1 \rangle$-orbits on $\Ghat$; whether this
upper bound is attained by some symmetric $S$ depends on whether a
connection set exists making all inversion-orbit eigenvalues distinct,
which is a separate problem not addressed here.
\end{remark}

\begin{proof}[Proof of Theorem~\ref{thm:max}]
For $G = \Z/N$, the inversion map $\chi_k \mapsto \chi_k^{-1} = \chi_{N-k}$
pairs $k$ with $N-k$.  The number of orbits on $\Ghat$ under this
involution is exactly $\lfloor N/2 \rfloor + 1$ (the set $\{0, 1, \ldots,
\lfloor N/2 \rfloor\}$).  The dimension of $\calA_\Q$ equals the number of
distinct eigenvalues, which is at most $\lfloor N/2 \rfloor + 1$.  For
$S = \{1,-1\}$, the eigenvalues $2\cos(2\pi k/N)$ for $k = 0, \ldots,
\lfloor N/2 \rfloor$ are distinct, so the maximum is achieved.
\end{proof}

\section{The Integral Spectral Algebra}
\label{sec:integral}

The \emph{integral spectral algebra} $\mathcal{A}_\Z(\Cay(G,S)) = \Z[A]$
is an order (a $\Z$-lattice that is also a ring) inside
$\calA_\Q(\Cay(G,S))$.

\begin{proposition}[Integral structure]
\label{prop:integral}
\begin{sloppypar}
$\Z[A]$ is a free $\Z$-module
of rank $r = \dim_\Q \calA_\Q(\Cay(G,S))$,
with $\Z$-basis $\{I, A, A^2, \ldots, A^{r-1}\}$.
\end{sloppypar}
Its discriminant
(as an order in $\calA_\Q$) is $\mathrm{disc}(m_{G,S,\Q})$, the
discriminant of the minimal polynomial.
\end{proposition}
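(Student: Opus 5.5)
The plan is to pass through the ring map $\mathrm{ev}_A : \Z[x] \to M_N(\Z)$, $f \mapsto f(A)$, whose image is $\Z[A]$. By Theorem~\ref{thm:main}(i), $m = m_{G,S,\Q}$ is a product of the monic irreducibles $\Psi_{\calO,\Q}$. Each root $\lambda_\chi$ is a sum of roots of unity, so it is an algebraic integer, and therefore each $\Psi_{\calO,\Q}$ lies in $\Z[x]$. Gauss's lemma gives the same conclusion directly, since $m$ divides the characteristic polynomial of $A$, which is monic in $\Z[x]$. Consequently $m \in \Z[x]$ is monic of degree $r = \dim_\Q \calA_\Q(\Cay(G,S))$ (Proposition~\ref{prop:quotient} and Corollary~\ref{cor:dim}).

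\emph{Generation.} For any $f \in \Z[x]$, division by the monic polynomial $m$ stays inside $\Z[x]$: we get $f = qm + \rho$ with $\deg \rho < r$ and integer coefficients. Since $m(A) = 0$, we have $f(A) = \rho(A)$, so $I, A, \ldots, A^{r-1}$ span $\Z[A]$ over $\Z$. Hence $\mathrm{ev}_A$ induces a surjection $\Z[x]/(m) \to \Z[A]$.

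\emph{Independence.} Any $\Z$-linear relation among $I, \ldots, A^{r-1}$ is also a $\Q$-linear relation. By Proposition~\ref{prop:quotient}, these powers form a $\Q$-basis of $\calA_\Q$, so the relation is trivial. Thus $\Z[A]$ is free of rank $r$ with the stated basis, and $\Z[A] \cong \Z[x]/(m)$. Tensoring with $\Q$ recovers $\calA_\Q$, so $\Z[A]$ is an order in $\calA_\Q$.

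\emph{Discriminant.} The discriminant of the order is $\det\bigl(\mathrm{Tr}(A^{i+j})\bigr)_{0 \le i,j < r}$, where $\mathrm{Tr}$ is the regular trace of $\calA_\Q$ over $\Q$. This is the main point needing care. The regular trace must not be confused with the matrix trace in $M_N$, which weights each eigenvalue by its multiplicity. I would compute the regular trace through the Wedderburn decomposition of Theorem~\ref{thm:main}(ii). After base change to $\overline{\Q}$, $\calA$ becomes $\overline{\Q}^r$, with coordinates given by evaluating at the $r$ distinct eigenvalues $\mu_1, \ldots, \mu_r$. So $\mathrm{Tr}(A^k) = \sum_i \mu_i^k$. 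The Gram matrix then factors as $V^\top V$, where $V = (\mu_i^{j})$ is the Vandermonde matrix. Hence the discriminant is $\prod_{i<j}(\mu_i - \mu_j)^2 = \mathrm{disc}(m)$, which is nonzero by squarefreeness (Theorem~\ref{dft}'s consequence, Theorem~\ref{thm:main}(i)). Equivalently, one can cite the standard fact that $\mathrm{disc}(\Z[x]/(m)) = \mathrm{disc}(m)$ for monic $m$, applied to the isomorphism just established.
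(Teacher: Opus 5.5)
Your proof is correct and follows the same outline as the paper's. The powers $I, A, \ldots, A^{r-1}$ form a $\Q$-basis of $\calA_\Q$, which gives $\Z$-independence, and the discriminant is that of the monogenic order $\Z[x]/(m_{G,S,\Q})$. You also supply a step that the paper's one-line spanning claim silently needs: $m_{G,S,\Q}$ is monic with integer coefficients, which you get from the algebraic integrality of the $\lambda_\chi$ or from Gauss's lemma. That is what makes division by $m_{G,S,\Q}$ keep every $A^k$ in the $\Z$-span. Finally, you verify the discriminant through the Vandermonde factorisation of the regular-trace Gram matrix rather than only citing it.
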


\begin{proof}
The set $\{I, A, \ldots, A^{r-1}\}$ is a $\Q$-basis for $\calA_\Q$
(since $\deg m = r$) and consists of integer matrices, so it spans
$\Z[A]$ as a $\Z$-module.  Independence over $\Z$ follows from
independence over $\Q$.  The discriminant formula is standard for
monogenic orders; see~\cite{Neu99}.
\end{proof}

\begin{example}[Integral algebra of $Q_2$]
$\Z[A_{Q_2}]$ has $\Z$-basis $\{I, A, A^2\}$ and minimal polynomial
$x^3 - 4x = x(x-2)(x+2)$.  The discriminant of a polynomial with
distinct roots $\lambda_1, \lambda_2, \lambda_3$ is
$\prod_{i < j}(\lambda_i - \lambda_j)^2$.  For roots $0, 2, -2$:
$\mathrm{disc}(x^3-4x) = (0-2)^2(0-(-2))^2(2-(-2))^2
= 4 \cdot 4 \cdot 16 = 256$.
The index $[\mathcal{O}_{\Q^3} : \Z[A]]$ equals $\sqrt{256/1} = 16$
(since the maximal order of $\Q^3 = \Q \times \Q \times \Q$ is $\Z^3$
with discriminant $1$, and the index squared equals $256/1$).
\end{example}

For the cyclic case, the integral spectral algebra relates to cyclotomic
orders.  The ring $\Z[2\cos(2\pi/d)]$ is the ring of integers of
$\Q(\zeta_d)^+$ (also written $\Z[\zeta_d + \zeta_d^{-1}]$), and the
integral Wedderburn decomposition is
\[
  \Z[A_{C_n}] \;\subseteq\; \prod_{d \mid n} \Z\bigl[2\cos(2\pi/d)\bigr]
  = \prod_{d \mid n} \mathcal{O}_{\Q(\zeta_d)^+},
\]
with the inclusion an equality of orders in many (but not all) cases.

\section{Non-standard Connection Sets and Character Sums}
\label{sec:nonstandard}

Most examples in this paper use the simplest connection sets
(generators and their negatives for cyclic factors, or the standard
basis for $(\Z/p)^k$).  The theory applies equally to arbitrary
symmetric $S$.

\begin{example}[Paley graph eigenvalues]
Let $p \equiv 1 \pmod{4}$ be prime and $G = \Z/p$.  The \emph{Paley
graph} $P(p)$ has connection set $S = \{a^2 : a \in (\Z/p)^*\}$ (the
quadratic residues mod $p$), which is symmetric since $p \equiv 1
\pmod{4}$ implies $-1$ is a square mod $p$, so $-S = S$.

The eigenvalues of $A_{P(p)}$ are the additive character sums
$\lambda_\chi = \sum_{s \in S} \chi(s)$ where $\chi : \Z/p \to
\overline{\Q}^*$ are the additive characters $\chi_a(g) = \omega_p^{ag}$.
The principal-character eigenvalue is $\lambda_{\chi_0} = |S| = (p-1)/2$.
For non-principal $\chi_a$ ($a \ne 0$), one writes
$\sum_{s \in QR_p} \omega_p^{as} = \frac{1}{2}\bigl(-1 + \tau(\eta,
\chi_a)\bigr)$, where $\tau(\eta, \chi_a) = \sum_{t=1}^{p-1} \eta(t)
\omega_p^{at}$ is a Gauss sum ($\eta$ the Legendre symbol).  Since
$|\tau(\eta, \chi_a)|^2 = p$, the two distinct non-principal eigenvalues
are $(-1 \pm \sqrt{p^*})/2$ where $p^* = (-1)^{(p-1)/2} p$.
The spectral algebra is therefore
$\calA_\Q(P(p)) \cong \Q \times \Q(\sqrt{p^*})$, with
$\dim_\Q = 1 + 2 = 3$ (two Galois orbits: the singleton $\{(p-1)/2\}$
and the conjugate pair $\{(-1\pm\sqrt{p^*})/2\}$) and
idempotent count $2^2 = 4$.
\end{example}

\begin{example}[Circulant with multiple generators on $\Z/9$]
Let $G = \Z/9$ and $S = \{1,-1,3,-3\}$.  Eigenvalues:
$\lambda_k = 2\cos(2\pi k/9) + 2\cos(2\pi k \cdot 3/9)
= 2\cos(2\pi k/9) + 2\cos(2\pi k/3)$.

For $k = 0$: $\lambda_0 = 4$.  For $k = 3$: $\lambda_3 = 2\cos(2\pi/3)
+ 2\cos(2\pi) = -1 + 2 = 1$.  For $k = 6$: $\lambda_6 = 2\cos(4\pi/3)
+ 2\cos(4\pi) = -1 + 2 = 1$.  For $k = 1$: $\lambda_1 = 2\cos(2\pi/9)
+ 2\cos(2\pi/3) = 2\cos(2\pi/9) - 1$.  This lies in $\Q(\zeta_9)^+$.
The distinct eigenvalues are $\{4, 1, 2\cos(2\pi/9)-1,
2\cos(4\pi/9)-1, 2\cos(8\pi/9)-1\}$, five values (the last three form
one Galois orbit over $\Q$).  Hence
$\calA_\Q(\Cay(\Z/9, \{1,-1,3,-3\})) \cong \Q^2 \times \Q(\zeta_9)^+$,
dimension $5$, idempotents $2^3 = 8$.
\end{example}

These examples illustrate that the spectral algebra is highly sensitive to
the choice of $S$: different connection sets on the same group $G$ can
give algebras of very different dimensions and Galois orbit structures.
The character-orbit decomposition (Theorem~\ref{thm:main}) provides the
unifying language for all such computations.

\section{Conclusion and Open Problems}
\label{sec:conc}

The character-orbit decomposition
\[
  \calA_K(\Cay(G,S)) \;\cong\;
  \prod_{\calO \in \calO_K(G,S)} K[x]/(\Psi_{\calO,K}(x))
\]
identifies the spectral algebra of any abelian Cayley graph as an explicit
product of field extensions of $K$, one per Galois orbit of character sums.
The proof is self-contained: DFT diagonalisation (Theorem~\ref{thm:dft})
gives the eigenvalues as character sums; the Galois action on $\Ghat$
(Lemma~\ref{lem:galois-action}) determines the orbits; separability of
$x^N - 1$ ensures the minimal polynomial is squarefree; and the Chinese
Remainder Theorem yields the Wedderburn decomposition.

The structural results cover all cases.  For $K = \Q$, every Wedderburn
summand is a real abelian number field contained in $\Q(\zeta_N)^+$.
The dimension equals the number of distinct eigenvalues over $\overline{K}$,
the idempotent count is $2^r$ where $r$ is the orbit number, and the
primitive idempotents are explicit via the Bezout algorithm.  The
Bose--Mesner perspective (Section~\ref{sec:bose}) places the spectral
algebra inside the group algebra $K[G]$ as a subalgebra generated by one
element.

Three qualitatively different regimes emerge.  The \emph{rational regime}
($p \le 3$ or all eigenvalues in $K$) gives $\calA_K \cong K^r$; this
covers Hamming cubes, grids over small primes, and many standard examples.
The \emph{quadratic regime} ($p = 5$ or similar) gives products including
$\Q(\sqrt{5})$ or other real quadratic fields.  The \emph{higher-degree
regime} ($p \ge 7$, or general $n$ with large prime divisors) produces
extensions of degree $\ge 3$, requiring the full Galois theory of real
cyclotomic fields~\cite{Was97}.

The tensor-product inequality
$\dim \calA_K(\Cay(G_1 \times G_2, S_1 \oplus S_2)) \le \dim_1 \cdot \dim_2$
(Proposition~\ref{prop:tensor}) shows that Cartesian products of Cayley
graphs have smaller spectral algebras than expected from the factors alone;
equality holds when all eigenvalue sums are distinct.  The maximum
spectral algebra on a cyclic group $\Z/N$ has dimension $\lfloor N/2\rfloor
+ 1$ and is achieved by the cycle $C_N$ (Theorem~\ref{thm:max}).

Several questions remain open.

\emph{Nonabelian groups.}  For a nonabelian group $G$ and a symmetric $S$
generating $G$, the eigenvalues of $A_{\Cay(G,S)}$ are not character sums
but arise from matrix representations; the spectral algebra becomes a direct
sum of matrix algebras.  A complete analogue of Theorem~\ref{thm:main} for
non-abelian Cayley graphs would require the full representation theory of $G$.

\emph{Cayley isomorphism problem.}  The spectral algebra type does not in
general distinguish non-isomorphic Cayley graphs on the same group
(Remark in Section~\ref{sec:invariant}).  Determining which graph properties
are detected by the spectral algebra, and which require richer invariants,
is an open problem with both algebraic and combinatorial content.

\emph{Integral orders.}  The order $\Z[A] \subseteq \calA_\Q(\Cay(G,S))$
(Section~\ref{sec:integral}) is a $\Z$-form of the spectral algebra.
Its class group, unit group, and discriminant are arithmetic invariants
connecting spectral graph theory to algebraic $K$-theory.  For the cycle
case, these invariants are related to class numbers of real cyclotomic
fields~\cite{Was97}, but the general abelian case is unexplored.

\emph{$p$-adic and local spectral algebras.}  The $p$-adic completion
$\calA_{\Q_p}(\Cay(G,S))$ decomposes according to $p$-adic Galois orbits,
which refine the $\Q$-orbits.  The interplay between the $p$-adic
factorisation of $m_{G,S,\Q}$ and the reduction modulo $p$
(Section~\ref{sec:charp}) suggests a local-global principle for spectral
algebras analogous to the Hasse--Minkowski theorem for quadratic forms.

\emph{Expansion and spectral algebra dimension.}  The spectral gap
$\gamma$ and the dimension $r$ of $\calA_K$ are related: large $r$ (many
distinct eigenvalues) tends to imply small spectral gap (eigenvalues
cluster), while Ramanujan graphs have few distinct eigenvalues.  A precise
quantitative relationship between $\gamma$, $r$, and the Galois orbit
structure of $\calA_K$ would strengthen the algebraic approach to graph
expansion.

\begin{ack}
The authors thank the referees for careful reading.

\medskip
\noindent\textbf{Competing interests.}
None.

\medskip
\noindent\textbf{Funding.}
None.

\medskip
\noindent\textbf{Data Availability.}
Data supporting this work are available from the corresponding author
(itsdeep@live.com) upon reasonable request.
\end{ack}

\appendix

\section{Extended Spectral Algebra Data}
\label{app:extended}

Table~\ref{tab:extended} gives the complete Wedderburn decomposition,
dimension, orbit count $r$, and idempotent count for additional abelian
Cayley graphs.  The notation follows Table~\ref{tab:data}: $F_d$ denotes
$\Q(\zeta_d)^+$, and $F_d = \Q$ for $d \in \{1,2,3,4,6\}$.

\begin{table}[ht]
\centering
\caption{Extended spectral algebra data. $\delta = \dim_\Q \calA_\Q$;
  $r$ = orbit count; $|$Idem$|$ $= 2^r$.}
\label{tab:extended}
\renewcommand{\arraystretch}{1.15}
\small
\begin{tabular}{p{2.5cm}ccc}
\toprule
$(G, S)$ & $\delta$ & $r$ & $|\text{Idem}|$ \\
\midrule
$C_3$, $S=\{1,-1\}$ & $2$ & $2$ & $4$ \\
$C_4$, $S=\{1,-1\}$ & $3$ & $3$ & $8$ \\
$C_5$, $S=\{1,-1\}$ & $3$ & $2$ & $4$ \\
$C_6$, $S=\{1,-1\}$ & $4$ & $4$ & $16$ \\
$C_8$, $S=\{1,-1\}$ & $5$ & $4$ & $16$ \\
$C_{10}$, $S=\{1,-1\}$ & $6$ & $4$ & $16$ \\
$C_{12}$, $S=\{1,-1\}$ & $7$ & $6$ & $64$ \\
$Q_1$ & $2$ & $2$ & $4$ \\
$Q_2$ & $3$ & $3$ & $8$ \\
$Q_3$ & $4$ & $4$ & $16$ \\
$Q_4$ & $5$ & $5$ & $32$ \\
$Q_5$ & $6$ & $6$ & $64$ \\
$\Z/6{\times}\Z/2$, $S_3$ & $7$ & $7$ & $128$ \\
$\Z/5{\times}\Z/2$, $S_3$ & $6$ & $4$ & $16$ \\
$\Z/4{\times}\Z/4$, $S_4$ & $5$ & $5$ & $32$ \\
$\Z/3{\times}\Z/3$, $S_4$ & $3$ & $3$ & $8$ \\
$\Z/2{\times}\Z/6$, $S_4$ & $6$ & $6$ & $64$ \\
$C_4\,{\square}\,C_6$ & $9$ & $9$ & $512$ \\
$C_5\,{\square}\,C_5$ & $6$ & $4$ & $16$ \\
$C_6\,{\square}\,C_6$ & $9$ & $9$ & $512$ \\
\bottomrule
\end{tabular}
\end{table}

The orbit count $r < \delta$ occurs when some Galois orbits have size
$> 1$; in such cases the Wedderburn product contains non-rational factors
and $2^r < 2^\delta$.  For the Hamming cubes all orbits are singletons
($r = \delta = k+1$); for $\Z/5 \times \Z/2$ there are two quadratic orbits
and two singleton orbits ($r = 4 < \delta = 6$).

\section{Verification of Idempotent Relations}
\label{app:idem-verify}

For completeness, we verify the idempotent relations
$e_\calO^2 = e_\calO$ and $e_\calO e_{\calO'} = 0$ in the explicit
$Q_2$ case of Example~\ref{ex:Q2-idem}.  The graph $Q_2$ has $|G| = 4$
vertices and $A$ is the $4 \times 4$ adjacency matrix.  From $A^3 = 4A$
(Cayley--Hamilton):
\begin{align*}
  e_2^2 &= \bigl(A(A+2I)/8\bigr)^2 = (A^3+2A^2)(A+2I)/64 \\
       &= (4A+2A^2)(A+2I)/64.
\end{align*}
Using $A^3 = 4A$ again: $A^2(A+2I) = A^3 + 2A^2 = 4A + 2A^2$, so
$e_2^2 = (4A+2A^2)(A+2I)/64 = A(A+2I)(4+2A-2\cdot2)/64$... the direct
check is cleaner via the projection property: since $e_2$ maps to $1$ in
the $\lambda=2$ factor and $0$ elsewhere, $e_2^2$ maps to $1^2=1$ and
$0^2=0$, confirming $e_2^2 = e_2$.  Similarly, $e_2 e_0$ maps to $1 \cdot
0 = 0$ in every factor, so $e_2 e_0 = 0$.  These projective-algebra
arguments are equivalent to direct matrix computation and confirm the
Bezout construction is exact.

Table~\ref{tab:data} records $\calA_\Q(\Cay(G,S))$ for small abelian groups
with the standard symmetric connection sets.  Here $r$ denotes the orbit
count and $F_\Delta$ denotes $\Q[x]/(x^2 - \Delta)$ (a quadratic field
when $\Delta$ is not a perfect square).

\begin{table}[ht]
\centering
\caption{Spectral algebra decomposition for selected abelian Cayley graphs.
  $N = |G|$; $\delta$ = number of distinct eigenvalues = $\dim_\Q \calA_\Q$.}
\label{tab:data}
\renewcommand{\arraystretch}{1.15}
\small
\begin{tabular}{llcc}
\toprule
$G$ & $\calA_\Q(\Cay(G,S)) \cong$ & $\delta$ & $2^r$ \\
\midrule
$\Z/3$, $S=\{1,-1\}$ & $\Q^2$ & $2$ & $4$ \\
$\Z/5$, $S=\{1,-1\}$ & $\Q \times \Q(\sqrt{5})$ & $3$ & $4$ \\
$\Z/7$, $S=\{1,-1\}$ & $\Q \times \Q(\zeta_7)^+$ & $4$ & $4$ \\
$(\Z/2)^2$, std.\ $S$ & $\Q^3$ & $3$ & $8$ \\
$(\Z/2)^3$, std.\ $S$ & $\Q^4$ & $4$ & $16$ \\
$(\Z/3)^2$, std.\ $S$ & $\Q^3$ & $3$ & $8$ \\
$(\Z/5)^2$, std.\ $S$ & $\Q^2 \times \Q(\sqrt{5})^2$ & $6$ & $16$ \\
$\Z/6{\times}\Z/2$, $S_3$ & $\Q^7$ & $7$ & $128$ \\
$\Z/5{\times}\Z/2$, $S_3$ & $\Q^2 \times \Q(\sqrt{5})^2$ & $6$ & $16$ \\
$\Z/4{\times}\Z/4$, $S_4$ & $\Q^5$ & $5$ & $32$ \\
$C_4\,\square\, C_6$ & $\Q^9$ & $9$ & $512$ \\
$C_5\,\square\, C_5$ & $\Q^2 \times \Q(\sqrt{5})^2$ & $6$ & $16$ \\
\bottomrule
\end{tabular}
\end{table}

Here $S_3$ denotes the standard connection set of size $3$
(basis vectors and their negatives for factors, plus the generator of
the $\Z/2$ factor), and $S_4$ denotes the size-$4$ standard set.
The last three rows treat Cartesian products; the dimension of
$C_5 \,\square\, C_5$ equals $6$: the three possible cosine values
$(2, \alpha, \beta)$ for each factor yield six distinct pairwise sums
$\{4, 2+\alpha, 2+\beta, 2\alpha, \alpha+\beta=-1, 2\beta\}$,
two rational singletons and two conjugate quadratic pairs.

\clearpage

\end{document}